\newtheorem{theorem}{Theorem}{}
\newtheorem{lemma}{Lemma}{}
\newtheorem{corollary}{Corollary}{}
\newtheorem{definition}{Definition}{}
{}
{}
\newcommand{\es}{\mathop{\rm ess \; sup}\limits}
\newcommand{\ei}{\mathop{\rm ess \; inf}\limits}
\def\Rn{{\mathbb{R}^n}}
\def\a{\alpha}
\chardef\No=242
\newcommand{\al}{\alpha}
\newcommand{\om}{\omega}
\newcommand{\Om}{\Omega}
\newcommand{\lb}{\lambda}
\newcommand{\vi}{\varphi}
\newcommand{\vs}{\vspace}
\begin{document}

\begin{center}
\Large Commutators of Riesz potential in the vanishing generalized weighted Morrey spaces with variable exponent
\end{center}

\

\centerline{\large Vagif S. Guliyev$^{a,b,c,}$\footnote{
Corresponding author.
\\
E-mail adresses: vagif@guliyev.com (V.S. Guliyev), hasanovjavanshir@yahoo.com.tr (J.J. Hasanov), xayyambadalov@gmail.com (X.A. Badalov).}, Javanshir J. Hasanov$^{d}$, Xayyam A. Badalov$^{c}$ }

\

\centerline{$^{a}$\it Dumlupinar University, Department of Mathematics, 43020 Kutahya, Turkey}

\centerline{$^{b}$\it \small S.M. Nikolskii Institute of Mathematics at RUDN University, 6 Miklukho-Maklay St, Moscow, 117198}

\centerline{$^{c}$\it Institute of Mathematics and Mechanics of NAS of Azerbaijan, AZ1141 Baku, Azerbaijan}

\centerline{$^{d}$\it Azerbaijan State Oil and Industry University, Azadlig av.20, AZ 1601, Baku, Azerbaijan}

\

%
%
%
%
%

\begin{abstract}
Let $\Om \subset \Rn$  be an unbounded open set. We consider the generalized weighted Morrey spaces $\mathcal{M}^{p(\cdot),\vi}_{\om}(\Om)$ and the vanishing generalized weighted Morrey spaces $V\mathcal{M}^{p(\cdot),\vi}_{\om}(\Om)$
with variable exponent $p(x)$ and a general function $\vi(x,r)$ defining the Morrey-type norm.
The main result of this paper are the boundedness of Riesz potential and its commutators on the spaces $\mathcal{M}^{p(\cdot),\vi}_{\om}(\Om)$ and $V\mathcal{M}^{p(\cdot),\vi}_{\om}(\Om)$.
This result generalizes several existing results for Riesz potential and its commutators on Morrey type spaces. Especially, it gives a unified result for
generalized Morrey spaces and variable Morrey spaces which currently gained a
lot of attentions from researchers in theory of function spaces.
\end{abstract}

\

\noindent{\bf AMS Mathematics Subject Classification:} $~~$ 42B20, 42B25, 42B35

\noindent{\bf Key words:} {Riesz potential, commutator, vanishing generalized weighted Morrey space with variable exponent, $BMO$ space}

\section{Introduction}

\

The variable exponent generalized weighted Morrey spaces $\mathcal{M}^{p(\cdot),\vi}_{\om}(\Om)$ over an open set $\Omega \subset \Rn$ was introduced and
the boundedness of the Hardy-Littlewood maximal operator, the singular integral operators and their commutators on these spaces was proven in \cite{GulHasBad}.
The main focus of this article is to prove that the Riesz potential and its commutators are bounded on generalized weighted Morrey spaces $\mathcal{M}^{p(\cdot),\vi}_{\om}(\Om)$ and vanishing generalized weighted Morrey spaces $V\mathcal{M}^{p(\cdot),\vi}_{\om}(\Om)$ with variable exponents. Also some Sobolev-type inequalities for Riesz potentials on spaces $\mathcal{M}^{p(\cdot),\vi}_{\om}(\Om)$ and $V\mathcal{M}^{p(\cdot),\vi}_{\om}(\Om)$ are proved.

The classical Morrey spaces were introduced by Morrey \cite{Morrey} to study the local behavior of solutions to second-order
elliptic partial differential equations. Moreover, various Morrey-type spaces are
defined in the process of study. Mizuhara \cite{Miz} and Nakai \cite{Nakai}
introduced generalized Morrey spaces $M^{p,\varphi}(\Rn)$ (see, also \cite{GulJIA}); Komori and Shirai \cite{KomShir} defined weighted Morrey spaces $L^{p,\kappa}(w)$; Guliyev \cite{Gul_EMJ2012} gave a concept of the generalized weighted Morrey spaces $M^{p,\varphi}_{w}({\mathbb R}^n)$ which could be viewed as extension of both $M^{p,\varphi}(\Rn)$ and $L^{p,\kappa}(w)$.

Vanishing Morrey spaces $VM^{p,\varphi}(\Rn)$ are subspaces of functions in Morrey spaces which were introduced by Vitanza \cite{Vi} satisfying the condition
$$
\lim\limits_{r\to 0}\sup\limits_{\substack{x\in\Rn \\ 0<t<r}} r^{-\frac{\lambda}{p}} \|f\chi_{B(x,t)}\|_{L^{p(\cdot)}(B(x,t)}=0
$$
and applied there to obtain a regularity result for elliptic partial differential
equations. Also Ragusa \cite{Ra} proved a sufficient condition for commutators of fractional integral operators to belong to vanishing Morrey spaces $VM^{p,\lambda}(\Rn)$. The vanishing generalized Morrey spaces $VM^{p,\varphi}(\Rn)$ were introduced and studied by Samko in \cite{NS}, see also \cite{AkbGulOm2017, DerGulSam2015, LongHan}.

As it is known, last two decades there is an increasing interest to the study of
variable exponent spaces and operators with variable parameters in such
spaces, we refer for instance to the surveying papers \cite{Din3, KSAMADE, Sam4},  on the progress in this field, including topics
of Harmonic Analysis and Operator Theory, see also references therein.
For mapping properties of maximal functions and Riesz potential on Lebesgue
spaces with variable exponent we refer to \cite{CFMP, CF2013, Din, 160zc, Din2, KSAM, Kop, RaTach}.

Variable exponent Morrey spaces ${\cal  L}^{p(\cdot), \lambda(\cdot)}(\Om)$, were introduced and studied in \cite{AHS} and \cite{MizShim} in the Euclidean setting. The boundedness of Riesz potential in variable exponent Morrey spaces ${\cal L}^{p(\cdot),\lb(\cdot)}(\Om)$ under the log-condition on $p(\cdot),\lb(\cdot)$  and a Sobolev type ${\cal L}^{p(\cdot),\lambda(\cdot)}\rightarrow {\cal L}^{q(\cdot),\lambda(\cdot)}$--theorem for potential operators of variable order $\al(x)$ was proved in \cite{KSAM}. In the case of constant $\al$, there
was also proved a boundedness theorem in the limiting case $p(x) =
\frac{n-\lambda(x)}{\al},$ when the potential operator $I^{\al}$ acts from
${\cal L}^{p(\cdot),\lambda(\cdot)}$ into $BMO$  was proved in \cite{AHS}. In \cite{MizShim} the
maximal operator and potential operators were considered in a somewhat more
general space, but under more restrictive conditions on $p(x)$.

Generalized Morrey spaces of such a kind in the case of constant $p$ were
studied  in \cite{Er2013BVP, GulJIA, Miz, Nakai, Scap1, Scap2}. In the case of bounded sets  the boundedness of the maximal operator, singular integral operator and the potential operators in generalized variable exponent Morrey type spaces was proved in  \cite{GulHasSam, GulHasSam1, GulHasS2} and in the case of unbounded sets  in \cite{GulS}.
Also, in the case of bounded sets the boundedness of these operators in generalized variable exponent weighted Morrey spaces
for the power weights was proved in \cite{Has7}.

In the case of constant $p$ and $\lb$, the results on the boundedness of potential operators go back to \cite{Adams} and \cite{P}, respectively, while the
boundedness of the maximal operator in the Euclidean setting was proved in
\cite{ChiFra}; for further results  in the case of constant $p$  and $\lb$ (see, for instance, \cite{BurGul2, ErGulAs2017}).

In the spaces $\mathcal{M}^{p(\cdot),\vi}_{\om}(\Om)$ over open sets $\Om \subset \Rn$ we
consider the following operators: \\
1) the Hardy-Littlewood maximal operator
$$
Mf(x)=\sup\limits_{r>0}|B(x,r)|^{-1} \int_{\widetilde{B}(x,r)}|f(y)|dy,
$$
2) Riesz potential operator
$$
I^{\a} f(x)=\int_{\Om}|x-y|^{\a -n} f(y)dy, ~~ 0<\a <n,
$$
3) the fractional maximal operator
$$
M^{\a}f(x)=\sup\limits_{r>0}|B(x,r)|^{\frac{\a}{n}-1}
\int_{\widetilde{B}(x,r)}|f(y)|dy, \,\, 0\le\a <n.
$$

We find the condition on the function $\vi(x,r)$ for the boundedness of the Riesz potential $I^{\al}$ and its commutators in the generalized weighted Morrey space ${ \cal M}^{p(\cdot),\vi}_{\om}(\Om)$
and the vanishing generalized weighted Morrey spaces $V\mathcal{M}^{p(\cdot),\vi}_{\om}(\Om)$ with variable $p(x)$ under the log-condition on $p(\cdot)$.

\vs{1mm}The paper is organized as follows. In Section \ref{Preliminaries} we
provide necessary preliminaries on variable exponent weighted Lebesgue, generalized weighted Morrey spaces
and vanishing generalized weighted Morrey spaces. In Section \ref{potentials} we prove the boundedness of
Riesz potential and its commutators on the variable exponent generalized weighted Morrey spaces.
In Section \ref{potentialsv} we prove the boundedness of Riesz potential and its commutators on the variable exponent vanishing generalized weighted Morrey spaces.

The main results are given in Theorems \ref{potv}, \ref{M1Xv}, \ref{RKomv}, \ref{potvk}, \ref{M1Xvc}, \ref{Risv} and \ref{CRisv}. We
emphasize that the results we obtain for generalized Morrey spaces are new
even in the case when $p(x)$ is constant, because we do not impose any
monotonicity type condition on $\vi(r).$ 

We use the following notation: $\Rn$ is the $n$-dimensional Euclidean space,  $\Om \subset \Rn$
is an open set, $\chi_E(x)$ is the characteristic function of a set $E\subseteq
\Rn$,  $B(x,r)=\{y \in \Rn :|x-y| < r\}), \  \widetilde{B}(x,r)=B(x,r)\cap\Om$,
by $c$,$C, c_1,c_2$ etc, we denote various absolute  positive constants,
which may have different values even in the same line.

\

\section{Preliminaries on variable exponent weighted Lebesgue, generalized weighted Morrey spaces and vanishing generalized weighted Morrey spaces}\label{Preliminaries}

\setcounter{theorem}{0} \setcounter{equation}{0}

\

We refer to the book \cite{160zc} for variable exponent Lebesgue spaces but give some basic definitions and facts. Let $p(\cdot)$ be a measurable function on $\Om$
with values in $(1,\infty)$. {An open set $\Om$ which may be unbounded throughout the whole paper}. We mainly suppose that
\begin{equation}\label{h0}
1< p_-\le p(x)\le p_+<\infty,
\end{equation}
where $
p_-:=\ei_{x\in \Om}p(x)$, $p_+:=\es_{x\in\Om}p(x)$.
By $L^{p(\cdot)}(\Om)$ we denote the space of all measurable functions $f(x)$
on $\Om$ such that
$$
I_{p(\cdot)}(f)= \int_{\Om}|f(x)|^{p(x)} dx < \infty.
$$
Equipped with the norm
$$\|f\|_{p(\cdot)}=\inf\left\{\eta>0:~I_{p(\cdot)}\left(\frac{f}{\eta}\right)\le
1\right\},
$$
this is a Banach function space. By $p^\prime(\cdot) =\frac{p(x)}{p(x)-1}$, $x\in
\Om,$ we denote the conjugate exponent.

The space $L^{p(\cdot)}(\Om)$ coincides with the space
\begin{equation}\label{hs1}
\left\{f(x): \left|\int_\Omega f(y)g(y)dy\right|<\infty~~for~
all~~g\in L^{p'(\cdot)}(\Omega)\right\}
\end{equation}
up to the equivalence of the norms
\begin{equation}\label{hs2}
\|f\|_{L^{p(\cdot)}(\Om)} \approx \sup_{\|g\|_{L^{p'(\cdot)}}\le 1}
\left|\int_{\Omega}f(y)g(y)dy\right|,
\end{equation}
see \cite[Proposition 2.2]{Kwok2016},  see also
\cite[Theorem 2.3]{KovRak1},  or \cite[Theorem 3.5]{Sam6}.

For the basics on variable exponent Lebesgue spaces we refer to \cite{Sh}, \cite{KovRak1}.

\noindent ${\cal P}(\Om)$ is the set of bounded measurable functions $p :\Om\to  [1,\infty)$;

\noindent ${\cal P}^{log}(\Om)$ is the set of exponents $p\in {\cal P}(\Om)$ satisfying the local log-condition
\begin{equation}\label{h8}
|p(x)-p(y)|\le\frac{A}{-\ln |x-y|}, \;\; |x-y|\le \frac{1}{2} \;\; x,y\in \Om,
\end{equation}
where  $A=A(p) > 0$ does not depend on $x, y$;

\noindent${\cal A}^{log}(\Om)$ is the set of bounded exponents $p :\Om\to  \Rn$ satisfying the
condition \eqref{h8};

\noindent$\mathbb{P}^{log}(\Om)$ is the set of exponents $p\in {\cal P}^{log}(\Om)$ with $1< p_-\le  p_+<\infty$;

\noindent for $\Om$ which may be unbounded, by ${\cal P}_\infty(\Om)$, ${\cal P}^{log}_\infty(\Om)$, $\mathbb{P}^{log}_\infty(\Om)$, ${\cal A}^{log}_\infty(\Om)$
we denote the subsets of the above sets of exponents satisfying the
decay condition (when $\Om$ is unbounded)
\begin{equation}\label{hi8}
|p(x)-p(\infty)|\le\frac{A_\infty}{\ln (2+|x|)},  \;\; x\in \Rn,
\end{equation}
where $p_\infty=\lim\limits_{x\to \infty}p(x)>1$.

We will also make use of the estimate provided by the following lemma ( see
\cite{160zc}, Corollary 4.5.9).
\begin{equation}\label{estikmate1}
\|\chi_{\widetilde{B}(x,r)}(\cdot)\|_{p(\cdot)}\le C
r^{\theta_p(x,r)}, \quad \ x\in\Om, \,\,p\in \mathbb{P}^{log}_\infty(\Om),
\end{equation}
where $\theta_p(x,r)=\left\{\begin{array}{cc} \frac{n}{p(x)}, \,\, r\le 1, \\  \frac{n}{p(\infty)}, \,\, r> 1\end{array}\right.$.

By $\om$ we always denote a weight, i.e. a {positive}, locally
integrable function with {domain} $\Om$.
The weighted Lebesgue space $L^{p(\cdot)}_{\om}(\Om)$ is defined as the set of all measurable
functions for which
\begin{equation*}
\|f\|_{L^{p(\cdot)}_{\om}(\Om)}= \|f\om\|_{L^{p(\cdot)}(\Om)}.
\end{equation*}

Let us define the class $A_{p(\cdot)}(\Om)$ (see \cite{Din3}, \cite{Kop}) to consist of those weights $\om$ for which
$$
[\om]_{A_{p(\cdot)}} \equiv \sup_{B}|B|^{-1}\|\om\|_{L^{p(\cdot)}(\widetilde{B}(x,r))}
\|\om^{-1}\|_{L^{p'(\cdot)}(\widetilde{B}(x,r))}<\infty.
$$

A weight function $\om$ belongs to the class $A_{p(\cdot),q(\cdot)}(\Om)$ if
$$
[\om]_{A_{p(\cdot),q(\cdot)}} \equiv \sup_{B}|B|^{\frac{1}{p(x)}-\frac{1}{q(x)}-1}\|\om\|_{L^{q(\cdot)}(\widetilde{B}(x,r))}
\|\om^{-1}\|_{L^{p'(\cdot)}(\widetilde{B}(x,r))}<\infty.
$$
\begin{lemma} Let $p,q$  satisfy condition \eqref{h0} and $\om \in A_{p(\cdot),q(\cdot)}(\Om)$, then $\om^{-1}\in A_{q'(\cdot),p'(\cdot)}(\Om)$, with $\frac1{p(x)}+\frac 1{p'(x)}=1$.
\end{lemma}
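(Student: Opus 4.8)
The plan is to recognize that this lemma is a self-duality statement for the class $A_{p(\cdot),q(\cdot)}(\Om)$ and that it can be settled by a direct substitution into the definition, with no estimation required. The idea is to compute the characteristic $[\om^{-1}]_{A_{q'(\cdot),p'(\cdot)}}$ literally from the defining supremum and show that it coincides with $[\om]_{A_{p(\cdot),q(\cdot)}}$, which is finite by hypothesis.

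First I would substitute into the definition of the $A$-class the weight $\om^{-1}$ together with the exponent pair $(q'(\cdot),p'(\cdot))$ in place of $(p(\cdot),q(\cdot))$. This gives
$$
[\om^{-1}]_{A_{q'(\cdot),p'(\cdot)}}=\sup_{B}|B|^{\frac{1}{q'(x)}-\frac{1}{p'(x)}-1}\,\|\om^{-1}\|_{L^{p'(\cdot)}(\widetilde{B}(x,r))}\,\|(\om^{-1})^{-1}\|_{L^{(q')'(\cdot)}(\widetilde{B}(x,r))}.
$$
Then I would simplify using the elementary identities $(\om^{-1})^{-1}=\om$ and $(q')'(\cdot)=q(\cdot)$, so that the second norm factor becomes $\|\om\|_{L^{q(\cdot)}(\widetilde{B}(x,r))}$. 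For the power of $|B|$ I would insert $\frac{1}{q'(x)}=1-\frac{1}{q(x)}$ and $\frac{1}{p'(x)}=1-\frac{1}{p(x)}$, which yields
$$
\frac{1}{q'(x)}-\frac{1}{p'(x)}-1=\left(1-\frac{1}{q(x)}\right)-\left(1-\frac{1}{p(x)}\right)-1=\frac{1}{p(x)}-\frac{1}{q(x)}-1 .
$$
Since the product of the two norm factors is commutative, the resulting supremum is exactly the defining expression for $[\om]_{A_{p(\cdot),q(\cdot)}}$, and hence $[\om^{-1}]_{A_{q'(\cdot),p'(\cdot)}}=[\om]_{A_{p(\cdot),q(\cdot)}}<\infty$.

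The only matters that need a word of justification are preliminary ones rather than genuine obstacles: condition \eqref{h0} guarantees $1<p_-\le p(x)\le p_+<\infty$ and the analogous bounds for $q$, hence $p'(\cdot),q'(\cdot)$ again take values in $(1,\infty)$ and the class $A_{q'(\cdot),p'(\cdot)}(\Om)$ is meaningful, while the conjugation identity $(q')'=q$ is checked pointwise from $p'(x)=\frac{p(x)}{p(x)-1}$. Beyond verifying these, there is no real difficulty; the whole content of the lemma is the cancellation in the exponent displayed above together with the commutativity of the product of the $L^{p'(\cdot)}$ and $L^{q(\cdot)}$ norms.
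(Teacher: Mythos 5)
Your proof is correct and follows essentially the same route as the paper's own argument: both substitute $\om^{-1}$ with the exponent pair $(q'(\cdot),p'(\cdot))$ directly into the definition, use $(\om^{-1})^{-1}=\om$ and $(q')'=q$, and observe that the exponent of $|B|$ simplifies via $\frac{1}{q'(x)}-\frac{1}{p'(x)}-1=\frac{1}{p(x)}-\frac{1}{q(x)}-1$, so the two characteristics coincide. Your write-up is in fact slightly more explicit than the paper's, which records only the key identity without spelling out the conjugation checks.
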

\begin{proof}
Let $p,q$  satisfy condition \eqref{h0} and $\om \in A_{p(\cdot),q(\cdot)}(\Om)$.
Then $\vi=\om^{-1} \in A_{q'(\cdot),p'(\cdot)}(\Om)$. Indeed,
\begin{align*}	
&|B|^{\frac{1}{q'(x)}-\frac{1}{p'(x)}-1} \, \|\vi\|_{L^{p'(\cdot)}(\widetilde{B}(x,r))} \, \|\vi^{-1}\|_{L^{q(\cdot)}(\widetilde{B}(x,r))}
\\
= &	|B|^{\frac{1}{p(x)}-\frac{1}{q(x)}-1}
\|\om\|_{L^{q(\cdot)}(\widetilde{B}(x,r))}
\|\om^{-1}\|_{L^{p'(\cdot)}(\widetilde{B}(x,r))}.
\end{align*}
\end{proof}

\begin{theorem}\label{max2} \cite[Therem 1.1]{HD} Let $\Om \subset \Rn$ be an open unbounded  set  and  $p\in \mathbb{P}^{log}_\infty(\Om)$. Then $M:L^{p(\cdot)}_{\om}(\Om)\to L^{p(\cdot)}_{\om}(\Om)$ if and only if $\om\in A_{p(\cdot)}(\Om)$.
\end{theorem}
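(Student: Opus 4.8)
The plan is to prove the two implications separately, since this is a weighted Muckenhoupt-type characterization: the necessity of $\om\in A_{p(\cdot)}(\Om)$ is the routine direction, obtained by testing the operator on functions concentrated on a single ball, while the sufficiency is the substantial one and requires the full force of the log- and decay-conditions on $p(\cdot)$.

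For necessity, I would assume $M$ is bounded and fix a ball, writing $\widetilde{B}=\widetilde{B}(x,r)$. For $f\ge 0$ supported in $\widetilde{B}$ and $y\in\widetilde{B}$ one has the elementary lower bound $Mf(y)\ge |B|^{-1}\int_{\widetilde{B}}f$, so that boundedness gives
$$
\frac{1}{|B|}\Big(\int_{\widetilde{B}}f\Big)\,\|\om\chi_{\widetilde{B}}\|_{p(\cdot)}\le C\,\|f\om\|_{p(\cdot)}.
$$
Choosing $f=g\om^{-1}\chi_{\widetilde{B}}$ with $g\ge 0$, this becomes $|B|^{-1}\|\om\chi_{\widetilde{B}}\|_{p(\cdot)}\int_{\widetilde{B}}g\om^{-1}\le C\|g\chi_{\widetilde{B}}\|_{p(\cdot)}$; taking the supremum over $\|g\chi_{\widetilde{B}}\|_{p(\cdot)}\le 1$ and invoking the duality \eqref{hs2} produces precisely $|B|^{-1}\|\om\chi_{\widetilde{B}}\|_{p(\cdot)}\|\om^{-1}\chi_{\widetilde{B}}\|_{p'(\cdot)}\le C$, i.e. $[\om]_{A_{p(\cdot)}}\le C\|M\|<\infty$.

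For sufficiency, assume $\om\in A_{p(\cdot)}(\Om)$. I would first recast the claim: boundedness on $L^{p(\cdot)}_{\om}(\Om)$ is equivalent to the $L^{p(\cdot)}(\Om)$-boundedness of the conjugated operator $g\mapsto \om\,M(g\om^{-1})$. The starting estimate is a pointwise averaging bound obtained from the variable-exponent H\"older inequality followed by the $A_{p(\cdot)}$ condition: for any ball $B$,
$$
\frac{1}{|B|}\int_{\widetilde{B}}|f|
\le \frac{C}{|B|}\,\|f\om\|_{L^{p(\cdot)}(\widetilde{B})}\,\|\om^{-1}\|_{L^{p'(\cdot)}(\widetilde{B})}
\le C\,\frac{\|f\om\|_{L^{p(\cdot)}(\widetilde{B})}}{\|\om\chi_{\widetilde{B}}\|_{p(\cdot)}},
$$
so that $\om(x)\,Mf(x)$ is dominated by $\om(x)\sup_{B\ni x}\|f\om\chi_{\widetilde{B}}\|_{p(\cdot)}/\|\om\chi_{\widetilde{B}}\|_{p(\cdot)}$. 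The core of the argument is then to show that this majorant is bounded on $L^{p(\cdot)}$, which I would do by discretizing the supremum over balls into a dyadic or sparse family of averaging operators and proving their uniform boundedness. This step relies on a self-improvement (reverse-H\"older / left-openness) property of the class $A_{p(\cdot)}$, for which the exponent must be frozen scale by scale: on small balls the local log-condition \eqref{h8} gives $p(\cdot)\approx p(x)$, on large balls the decay condition \eqref{hi8} gives $p(\cdot)\approx p(\infty)$, and the norm estimate \eqref{estikmate1} controls $\|\chi_{\widetilde{B}}\|_{p(\cdot)}$ in both regimes, reducing the weighted averages to uniform constant-exponent Muckenhoupt bounds that can be summed over the sparse family.

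The main obstacle is precisely this sufficiency direction, and within it the self-improving property of $A_{p(\cdot)}$ on an unbounded domain. Unlike the classical constant-exponent case, one cannot apply Muckenhoupt's reverse-H\"older machinery verbatim, since the exponent varies; the delicate point is to control the interaction between the weight $\om$ and the variable exponent across all scales simultaneously, passing from the local freezing near each point to the global $p(\infty)$-behavior at infinity without losing uniformity of the constants. Everything else — the pointwise averaging bound, the duality reduction, and the dyadic discretization — is then comparatively mechanical.
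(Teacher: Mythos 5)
First, a point of calibration: the paper does not prove this statement at all --- it is imported verbatim, with the citation \cite[Theorem 1.1]{HD}, and used as a black box throughout. So your proposal is not being measured against an in-paper argument but against the content of an entire external paper. Your necessity direction is correct and is the standard testing argument: the lower bound $Mf(y)\gtrsim |B|^{-1}\int_{\widetilde{B}}f$ for $y\in\widetilde{B}$ (with a harmless $2^{-n}$ from re-centering the ball at $y$), the test functions $f=g\om^{-1}\chi_{\widetilde{B}}$ (one should truncate $\om^{-1}$ by $\min(\om^{-1},N)$ and pass to the limit, since $\om^{-1}$ need not be locally integrable a priori), and the duality \eqref{hs2} indeed give $[\om]_{A_{p(\cdot)}}\lesssim \|M\|$.

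The genuine gap is in the sufficiency direction, and it is the whole theorem. Everything after your pointwise bound $Mf(x)\lesssim \sup_{B\ni x}\|f\om\|_{L^{p(\cdot)}(\widetilde{B})}\,\|\om\|_{L^{p(\cdot)}(\widetilde{B})}^{-1}$ is a declaration of intent rather than a proof: the discretization of the supremum over balls into a sparse/dyadic family of averaging operators \emph{with uniform constants}, the self-improvement (left-openness) of the class $A_{p(\cdot)}$, and the claim that freezing the exponent via \eqref{h8}, \eqref{hi8} and \eqref{estikmate1} reduces matters to constant-exponent Muckenhoupt bounds ``that can be summed'' are precisely the hard content of \cite{HD}. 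None of these is stated precisely, let alone proved, and none is routine: comparing $\|\om\|_{L^{p(\cdot)}(\widetilde{B})}$ with frozen-exponent quantities uniformly over all scales of an unbounded domain, and establishing any reverse-H\"older-type gain for a variable-exponent weight class, require the local-to-global machinery (averaging-operator characterizations of $A_{p(\cdot)}$, norm estimates for $\om\chi_{\widetilde{B}}$ at small and large scales) that occupies most of that paper. As written, the ``comparatively mechanical'' summation over the sparse family presupposes exactly the uniform weighted bounds you have not established, so the argument is circular at its core step. In short: you have correctly identified the right strategy and correctly located the difficulty, but the proposal does not close it; to become a proof it would need the self-improvement lemma and the uniform discretization lemma either proved or explicitly invoked from the literature, which is what the paper itself does by citing \cite{HD}.
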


For unbounded sets, say $\Om = \Rn,$ and constant orders a the corresponding Sobolev theorem
proved in \cite{CaCF,CFMP} runs as follows.
\begin{theorem}\label{KS1} Let $\Om \subset \Rn$ be an open unbounded set, $0<\al<n$ and $p\in \mathbb{P}^{log}_\infty(\Om)$. Let also $p_+<\frac {n}{\al} $.
	Then the operators $M^{\al}$ and $I^{\al}$ are bounded from $L^{p(\cdot)}(\Om)$ to
	$L^{q(\cdot)}(\Om)$ with $\frac1{q(x)}=\frac 1{p(x)}-\frac {\al} {n}.$
\end{theorem}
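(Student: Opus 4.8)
The plan is to reduce the Sobolev estimate to a Hedberg-type pointwise inequality combined with the boundedness of the Hardy--Littlewood maximal operator on $L^{\pc}(\Om)$. Since the fractional maximal operator obeys the elementary pointwise bound $M^{\al}f(x)\le C\,I^{\al}(|f|)(x)$, it is enough to handle $I^{\al}$, and I may assume $f\ge 0$.

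First I would split the potential at a free radius $\delta>0$,
\[
I^{\al}f(x)=\int_{\widetilde B(x,\delta)}|x-y|^{\al-n}f(y)\,dy
+\int_{\Om\setminus\widetilde B(x,\delta)}|x-y|^{\al-n}f(y)\,dy
=:A_\delta(x)+B_\delta(x).
\]
Decomposing $\widetilde B(x,\delta)$ into the annuli $\{2^{-k-1}\delta\le|x-y|<2^{-k}\delta\}$ and estimating each integral by $|B(x,2^{-k}\delta)|\,Mf(x)$ gives, after summing the geometric series (ratio $2^{-\al}$), the near-part bound $A_\delta(x)\le C\,\delta^{\al}Mf(x)$. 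For the far part I would invoke the variable-exponent Hölder inequality \eqref{hs2} and then estimate $\big\||x-\cdot|^{\al-n}\chi_{\Om\setminus\widetilde B(x,\delta)}\big\|_{p'(\cdot)}$ by splitting into the annuli $\{2^{k}\delta\le|x-y|<2^{k+1}\delta\}$, $k\ge0$, on which $|x-y|^{\al-n}\approx(2^{k}\delta)^{\al-n}$, and applying the characteristic-function estimate \eqref{estikmate1} to $\|\chi_{\widetilde B(x,2^{k+1}\delta)}\|_{p'(\cdot)}$. The resulting series is geometric with ratio governed by $\al-n+\tfrac{n}{p'(x)}<0$, which is precisely where the assumption $p_+<\tfrac n\al$ (equivalently $\tfrac1{p(x)}>\tfrac\al n$) enters; summation yields $B_\delta(x)\le C\,\delta^{\al-\frac{n}{p(x)}}\|f\|_{\pc}$.

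Next I would optimize $I^{\al}f(x)\le C\big(\delta^{\al}Mf(x)+\delta^{\al-\frac{n}{p(x)}}\|f\|_{\pc}\big)$ in $\delta$, choosing $\delta=(\|f\|_{\pc}/Mf(x))^{p(x)/n}$, to obtain the pointwise Hedberg inequality
\[
I^{\al}f(x)\le C\,\big(Mf(x)\big)^{\frac{p(x)}{q(x)}}\,\|f\|_{\pc}^{\,\frac{\al p(x)}{n}},
\qquad \tfrac{p(x)}{q(x)}=1-\tfrac{\al p(x)}{n}.
\]
Normalizing $\|f\|_{\pc}=1$, raising to the power $q(x)$ and integrating over $\Om$ then gives $I_{q(\cdot)}(I^{\al}f)\le C\int_\Om(Mf(x))^{p(x)}\,dx=C\,I_{\pc}(Mf)$. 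Since the constant weight belongs to $A_{\pc}(\Om)$, Theorem \ref{max2} (with $\om\equiv1$) furnishes the boundedness of $M$ on $L^{\pc}(\Om)$, so $I_{\pc}(Mf)\le C$; a standard homogeneity argument then removes the normalization and yields $\|I^{\al}f\|_{q(\cdot)}\le C\|f\|_{\pc}$.

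The hard part will be the far-part bound $B_\delta(x)\le C\,\delta^{\al-n/p(x)}\|f\|_{\pc}$: in the variable-exponent setting one cannot simply integrate a power of $|x-y|$, and the exponent $\theta_{p'}(x,r)$ in \eqref{estikmate1} switches from $n/p'(x)$ to $n/p'(\infty)$ across $r=1$. Forcing the dyadic sum to converge to the single clean power $\delta^{\al-n/p(x)}$, uniformly in $x\in\Om$ and $\delta>0$, is exactly what requires both the local log-condition \eqref{h8} and the decay condition \eqref{hi8}, the latter being indispensable because $\Om$ is unbounded.
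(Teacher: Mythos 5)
The paper itself offers no proof of Theorem \ref{KS1}: it is quoted from \cite{CaCF,CFMP}, where it is obtained via the fractional maximal operator and Rubio de Francia extrapolation rather than by a Hedberg argument, so your sketch has to stand on its own. Most of it does (the reduction $M^{\al}f\le CI^{\al}|f|$, the near-part bound $A_\delta(x)\le C\delta^{\al}Mf(x)$, and the final integration step are all sound), but there is a genuine gap exactly at the step you flagged as ``the hard part''. The uniform-in-$\delta$ far bound $B_\delta(x)\le C\,\delta^{\al-n/p(x)}\|f\|_{\pc}$, and with it the single-power Hedberg inequality $I^{\al}f(x)\le C\,(Mf(x))^{p(x)/q(x)}\|f\|_{\pc}^{\al p(x)/n}$, are \emph{false} on unbounded $\Om$, and no strengthening of \eqref{h8}, \eqref{hi8} can force them: in your own dyadic computation, once $\delta>1$ every annulus has radius $>1$, so \eqref{estikmate1} (applied to $p'$) produces the exponent $n/p'(\infty)$ in every term and the sum is $\approx\delta^{\al-n/p(\infty)}$, not $\delta^{\al-n/p(x)}$. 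This is not a removable artifact. Take $\Om=\Rn$, fix $x$ with $p(x)<p(\infty)$ (the decay condition permits this at any fixed $x$), and let $f=\chi_{A_R}/\|\chi_{A_R}\|_{\pc}$ with $A_R=\{y: R\le|x-y|\le 2R\}$, $R\gg1$. Writing $S=\int f$, the estimate \eqref{estikmate1} gives $\|\chi_{A_R}\|_{\pc}\le \|\chi_{B(x,2R)}\|_{\pc}\le C R^{n/p(\infty)}$, hence $S\ge c\,R^{n/p'(\infty)}$; on the other hand $I^{\al}f(x)\ge c\,R^{\al-n}S$ and $Mf(x)\le C\,R^{-n}S$. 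Substituting these into your pointwise inequality and simplifying, it would force $S\le C\,R^{n/p'(x)}$, which contradicts $S\ge c\,R^{n/p'(\infty)}$ for large $R$, since $p(x)<p(\infty)$ means $p'(x)>p'(\infty)$.

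The known repair keeps both regimes (this is how the Hedberg method is actually run on unbounded sets, cf. \cite{KSAM} and \cite[Ch.~6]{160zc}). Normalize $\|f\|_{\pc}=1$. If $Mf(x)\ge 1$, choose $\delta\le 1$ as you did and get $I^{\al}f(x)\lesssim (Mf(x))^{p(x)/q(x)}$, which integrates exactly as in your sketch. If $Mf(x)<1$, choose $\delta=(Mf(x))^{-p(\infty)/n}\ge1$; both parts then give $I^{\al}f(x)\lesssim (Mf(x))^{p(\infty)/q(\infty)}$. This second regime requires an idea absent from your outline: setting $\beta(x):=q(x)p(\infty)/q(\infty)$, one computes $\beta(x)-p(x)=\al p(x)(p(x)-p(\infty))/(n-\al p(x))$, which can be negative, so $(Mf)^{\beta(x)}$ is not dominated by $(Mf)^{p(x)}$ pointwise. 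One splits $\{Mf<1\}$ according to whether $Mf(x)\ge (2+|x|)^{-N}$ or not: on the first set $(Mf)^{\beta(x)-p(x)}\le (2+|x|)^{N|\beta(x)-p(x)|}\le e^{CNA_\infty}$ by the decay condition \eqref{hi8}, so $(Mf)^{\beta(x)}\lesssim (Mf)^{p(x)}$ there; on the second set $(Mf)^{\beta(x)}\le (2+|x|)^{-N\beta_-}$ (where $\beta_-:=\inf_x\beta(x)>0$) is integrable once $N\beta_->n$. This is where \eqref{hi8} genuinely enters --- not in producing a single clean power. With these corrections your strategy closes; alternatively one can bypass Hedberg entirely and follow the cited sources, proving the $M^{\al}$ bound first and passing to $I^{\al}$ by extrapolation.
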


\vs{4mm} Let $\lambda(x)$ be a measurable function on $\Omega$ with values in $[0,n]$. The variable Morrey space $\mathcal{L}^{p(\cdot),\lambda(%
	\cdot)}(\Omega)$ and variable weighted Morrey space $\mathcal{L}%
^{p(\cdot),\lambda(\cdot)}_{\om}(\Omega)$ is defined as the set of
integrable functions $f$ on $\Omega$ with the finite norms
\begin{equation*}
\|f\|_{\mathcal{L}^{p(\cdot),\lambda(\cdot)}(\Omega)}= \sup_{x\in \Omega, \;
	t>0} t^{-\frac{\lambda(x)}{p(x)}}\| f\chi_{\widetilde{B}(x,t)}\|_{L^{p(%
		\cdot)}(\Omega)},
\end{equation*}
\begin{equation*}
\|f\|_{\mathcal{L}^{p(\cdot),\lambda(\cdot)}_{\om}(\Omega)}=
\sup_{x\in \Omega, \; t>0} t^{-\frac{\lambda(x)}{p(x)}}\| f\chi_{\widetilde{B}(x,t)}\|_{L^{p(\cdot)}_{\om}(\Omega)}.
\end{equation*}

Let $\om$ be a nonnegative measurable function on $\Rn$ such that $\om^p$ is locally integrable on $\Rn$. Then a Radon
measure $\mu$ is canonically associated with the weight $\om(\cdot)^{p(\cdot)}$, that is,
$$
\mu(E) =\int_E \om(y)^{p( y)}dy.
$$

We denote by ${\cal L}^{p(\cdot),\lambda}(\Rn,d\mu)$ the set of all measurable functions $f$ with finite norm
$$
\|f\|_{{\cal L}^{p(\cdot),\lambda}(\Rn,d\mu)}= \inf\left\{\eta>0:~\sup_{x\in \Om, \; t>0}
\frac{t^{\lambda}}{\mu(B(x,t))}\int_{B(x,t)}
\left(\frac{|f(y)|}{\eta}\right)^{p(y)}d\mu(y) \le 1\right\}.
$$

\begin{theorem}\label{pv} \cite{MizSh}
	Let $0<\al<n$, $0\le \lb<n$, $p\in \mathbb{P}^{log}_\infty(\Rn)$, $p_+<\frac {\lb}{\al} $, $\frac1{q(x)}=\frac 1{p(x)}-\frac {\al} {\lb}$, $\om\in A_{p(\cdot)}(\Rn)$.
	Then the operator  $I^{\al}$ is bounded from ${\cal L}^{p(\cdot),\lb} (\Rn,d\mu)$ to
	${\cal L}^{q(\cdot),\lb} (\Rn,d\mu)$.
\end{theorem}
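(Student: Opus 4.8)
The plan is to prove Theorem \ref{pv} by establishing a pointwise local estimate on $I^{\al}f(x)$ and then combining it with the Sobolev-type mapping property on the underlying variable Lebesgue space. First I would fix $x\in\Rn$ and $r>0$ and split $I^\al f = I^\al(f\chi_{B(x,2r)}) + I^\al(f\chi_{\Rn\setminus B(x,2r)}) =: I_0(x)+I_\infty(x)$. For the local part $I_0$, I would apply Theorem \ref{KS1} (the Hardy--Littlewood--Sobolev theorem on $L^{p(\cdot)}(\Rn)$ with $\frac1{q(x)}=\frac1{p(x)}-\frac{\al}{n}$), which is the genuinely ``hard analysis'' input; the boundedness of $I^\al$ on the variable Lebesgue scale lets me control $\|I_0\chi_{B(x,t)}\|_{L^{q(\cdot)}}$ by $\|f\chi_{B(x,2r)}\|_{L^{p(\cdot)}}$, which is in turn dominated by the Morrey norm times the appropriate power of $r$ coming from \eqref{estikmate1}.

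For the far part $I_\infty$, the standard device is to dyadically decompose the complement of $B(x,2r)$ into annuli $B(x,2^{k+1}r)\setminus B(x,2^k r)$ on each of which $|x-y|^{\al-n}\approx (2^k r)^{\al-n}$, and then estimate $\int_{B(x,2^{k+1}r)}|f(y)|\,dy$ by Hölder's inequality in the variable-exponent form \eqref{hs2}, extracting a factor $\|f\chi_{B(x,2^{k+1}r)}\|_{L^{p(\cdot)}}$ times $\|\chi_{B(x,2^{k+1}r)}\|_{L^{p'(\cdot)}}$. Using \eqref{estikmate1} for the characteristic-function norms and the defining Morrey bound on $f$, each dyadic term should contribute a geometric factor, and summing over $k\ge1$ (which converges precisely because the gain from $|x-y|^{\al-n}$ beats the growth of the ball, under the hypothesis $p_+<\frac{\lb}{\al}$ controlling the relation between $\al$ and the Morrey index) yields a bound of the same homogeneity as the target $q(\cdot),\lb$-norm.

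The two pieces together give a pointwise-in-$(x,t)$ estimate of the form $t^{-\lb/q(x)}\|I^\al f\,\chi_{\widetilde B(x,t)}\|_{L^{q(\cdot)}_\omega}\lesssim \|f\|_{{\cal L}^{p(\cdot),\lb}(\Rn,d\mu)}$, and taking the supremum over $x$ and $t$ completes the argument. The role of the weight $\omega\in A_{p(\cdot)}(\Rn)$ and the measure $\mu$ is to let all the estimates pass through in the weighted setting; concretely I expect to invoke the $A_{p(\cdot)}$ condition to reconcile the two characteristic-function norms $\|\om\|_{L^{q(\cdot)}(\widetilde B)}$ and $\|\om^{-1}\|_{L^{p'(\cdot)}(\widetilde B)}$ whenever Hölder is applied against the weight, much as in the unweighted computation but with the $A_{p(\cdot)}$ balancing the two factors.

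The main obstacle I anticipate is the bookkeeping of exponents in the weighted Hölder step: one must verify that the powers of $2^k r$ produced by \eqref{estikmate1} for $\|\chi\|_{L^{p'(\cdot)}}$, combined with the weight factors from the $A_{p(\cdot)}$ (or $A_{p(\cdot),q(\cdot)}$) hypothesis and the annular factor $(2^k r)^{\al-n}$, align to produce a \emph{summable} series whose sum has exactly the homogeneity $t^{\lb/q(x)}$ demanded by the target norm. The decay and local log-conditions ($p\in\mathbb P^{log}_\infty$) are what guarantee that $\theta_p(x,r)$ and $\theta_{q}(x,r)$ behave like genuine powers uniformly in $x$, so that the geometric summation is legitimate for both $r\le1$ and $r>1$; handling the transition between these two regimes in $\theta_p$ is where I would be most careful.
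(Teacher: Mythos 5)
Your proposal proves the wrong theorem. The statement in question is an Adams-type, not a Spanne-type, result: the target exponent satisfies $\frac{1}{q(x)}=\frac{1}{p(x)}-\frac{\alpha}{\lambda}$ with $\lambda<n$ and $p_+<\lambda/\alpha$, so $q(x)$ is strictly \emph{larger} than the Sobolev exponent $q_S(x)$ defined by $\frac{1}{q_S(x)}=\frac{1}{p(x)}-\frac{\alpha}{n}$, which is what Theorem \ref{KS1} delivers. Your treatment of the local part $I_0=I^{\alpha}(f\chi_{B(x,2r)})$ therefore lands in $L^{q_S(\cdot)}$ on the ball, and an $L^{q_S(\cdot)}(B)$ bound can never be upgraded to the required $L^{q(\cdot)}(B)$ bound, since H\"older on a ball only lowers integrability. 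The split-plus-Sobolev scheme you describe is exactly how the paper proves its Spanne-type local estimate (Theorem \ref{potv}); it cannot yield Theorem \ref{pv}. Note also that the paper itself gives no proof of this statement: it is quoted from \cite{MizSh}. The known proof there goes through Hedberg's trick: split $I^{\alpha}f$ at a radius $\delta$, bound the near part by $\delta^{\alpha}$ times a maximal function, bound the far part by (roughly) $\delta^{\alpha-\lambda/p(x)}$ times the Morrey norm --- this is where $p_+<\lambda/\alpha$ enters, ensuring a negative exponent and a convergent dyadic sum --- and then optimize in $\delta$ to obtain the pointwise inequality $|I^{\alpha}f(x)|\lesssim (Mf(x))^{p(x)/q(x)}\,\|f\|^{1-p(x)/q(x)}$. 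The theorem then follows from the boundedness of the maximal operator on the Morrey space associated with $\mu$, which is where $\omega\in A_{p(\cdot)}(\mathbb{R}^n)$ is used. Your far-field computation and the role you assign to $p_+<\lambda/\alpha$ are correct in spirit, but without the optimization step they only produce the weaker Spanne-type conclusion.

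There is a second, independent problem: you have misidentified the target norm. The space $\mathcal{L}^{q(\cdot),\lambda}(\mathbb{R}^n,d\mu)$ is defined through the modular condition $\sup_{x,t}\frac{t^{\lambda}}{\mu(B(x,t))}\int_{B(x,t)}\bigl(|f(y)|/\eta\bigr)^{q(y)}\,d\mu(y)\le 1$, where $d\mu=\omega(y)^{p(y)}dy$; integration is against the measure $\mu$ and the normalization is $t^{\lambda}/\mu(B(x,t))$, not the quantity $t^{-\lambda/q(x)}\|I^{\alpha}f\,\chi_{\widetilde{B}(x,t)}\|_{L^{q(\cdot)}_{\omega}}$ that you propose to bound. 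These are genuinely different objects (observe in particular that the same measure $\mu$, built from the exponent $p(\cdot)$, is used for both the source and the target space). Consequently the estimate \eqref{estikmate1} for $\|\chi_{B}\|_{L^{p'(\cdot)}}$, which you plan to use in the H\"older and summation steps, must be replaced by estimates for $\mu(B(x,t))$, and those rest on the $A_{p(\cdot)}$ condition rather than on the log-conditions alone. So the exponent bookkeeping you single out as the ``main obstacle'' is set up against the wrong norm from the start.
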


In view of the well known pointwise estimate $M^{\al}f(x)\le  C(I^{\al}|f|)(x)$, it suffices to treat only the case of the operator
$I^{\al}$.

\begin{corollary}\label{pvc} (\cite{CrWa}, \cite{MizSh})
	Let $\Om \subset \Rn$ be an open unbounded  set, $0<\al<n$, $p\in \mathbb{P}^{log}_\infty(\Om)$, $p_+<\frac {n}{\al} $, $\frac1{q(x)}=\frac 1{p(x)}-\frac {\al} {n}$, $\om\in A_{p(\cdot),q(\cdot)}(\Om)$.
	Then the operators $M^{\al}$ and $I^{\al}$ are bounded from $L^{p(\cdot)}_{\om}(\Om)$ to
	$L^{q(\cdot)}_{\om}(\Om)$.
\end{corollary}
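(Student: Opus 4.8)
The goal is Corollary~\ref{pvc}, the weighted boundedness $I^\al, M^\al : L^{p(\cdot)}_{\om}(\Om)\to L^{q(\cdot)}_{\om}(\Om)$ for $\om\in A_{p(\cdot),q(\cdot)}(\Om)$ on an unbounded open set $\Om$, with $\frac{1}{q(x)}=\frac1{p(x)}-\frac{\al}{n}$. Since Corollary~\ref{pvc} is stated as a consequence of Theorem~\ref{pv} (and the paper already records the pointwise bound $M^\al f(x)\le C I^\al(|f|)(x)$), the plan is to reduce the weighted statement to the measure-theoretic Sobolev theorem. First I would treat only $I^\al$, the bound for $M^\al$ following immediately from the pointwise domination together with the definition of the norms.

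The core reduction is to unwind the weighted norm into a norm against the Radon measure $d\mu = \om(y)^{p(y)}\,dy$ that appears before Theorem~\ref{pv}. By definition $\|g\|_{L^{p(\cdot)}_{\om}(\Om)}=\|g\om\|_{L^{p(\cdot)}(\Om)}$, so the weighted problem for $I^\al$ with source exponent $p(\cdot)$ and target exponent $q(\cdot)$ should be transferred, via the substitution $f\mapsto f\om$, into an unweighted-looking inequality in which the weight is absorbed into the underlying measure. Concretely, one writes $\|I^\al f\|_{L^{q(\cdot)}_{\om}}=\|(I^\al f)\,\om\|_{L^{q(\cdot)}(\Om)}$ and $\|f\|_{L^{p(\cdot)}_{\om}}=\|f\om\|_{L^{p(\cdot)}(\Om)}$, and identifies these with the modular norms $\|\cdot\|_{{\cal L}^{p(\cdot),\lb}(\Rn,d\mu)}$-type quantities with $\lb=n$, so that Theorem~\ref{pv} applies with $\lb=n$ (note that for $\lb=n$ the condition $\frac{1}{q(x)}=\frac1{p(x)}-\frac{\al}{\lb}$ reduces exactly to the Sobolev relation stated in the corollary, and $p_+<\frac{\lb}{\al}=\frac{n}{\al}$ matches the hypothesis). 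The hypotheses $p\in\mathbb{P}^{log}_\infty(\Om)$ and $p_+<\frac{n}{\al}$ are shared, and the $A_{p(\cdot),q(\cdot)}$ condition on $\om$ is precisely what guarantees that $\om^{p(\cdot)}$ gives an admissible weight/measure for Theorem~\ref{pv}.

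The one nontrivial point — the main obstacle — is verifying that membership $\om\in A_{p(\cdot),q(\cdot)}(\Om)$ translates correctly into the $A_{p(\cdot)}$-type admissibility required by Theorem~\ref{pv} for the measure $d\mu=\om^{p(\cdot)}\,dy$ under the exponent shift from $p(\cdot)$ to $q(\cdot)$. Here I would use the off-diagonal nature of $A_{p(\cdot),q(\cdot)}$: the factor $|B|^{\frac1{p(x)}-\frac1{q(x)}-1}=|B|^{\frac{\al}{n}-1}$ in its definition is exactly the fractional scaling attached to $I^\al$, and one checks that controlling $\|\om\|_{L^{q(\cdot)}(\widetilde B)}\|\om^{-1}\|_{L^{p'(\cdot)}(\widetilde B)}$ is the correct two-weight $(p(\cdot),q(\cdot))$ testing condition for the fractional integral, as in the classical Muckenhoupt--Wheeden theory and its variable-exponent analogue in \cite{CrWa}. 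The estimate \eqref{estikmate1} on $\|\chi_{\widetilde B(x,r)}\|_{p(\cdot)}$ is available to handle the passage between the scaling exponents on small and large balls (the two regimes $r\le 1$ and $r>1$ in $\theta_p$), and the duality identity of the preceding Lemma, $\om^{-1}\in A_{q'(\cdot),p'(\cdot)}$, can be invoked should one wish to argue by duality on the target side.

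Once these ingredients are assembled, the boundedness of $I^\al$ is read off from Theorem~\ref{pv} with $\lb=n$ and the measure $d\mu$, and the boundedness of $M^\al$ follows from $M^\al f\le C\,I^\al(|f|)$ together with monotonicity of the variable-exponent Morrey norms. I expect the only genuine work to be the bookkeeping in the preceding paragraph, converting the $A_{p(\cdot),q(\cdot)}$ hypothesis into the hypothesis of Theorem~\ref{pv}; the remaining steps are formal manipulations of the weighted norms.
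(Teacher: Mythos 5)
There is a genuine gap, and it sits exactly at the step you call ``the core reduction''. Theorem \ref{pv} cannot be invoked with $\lb=n$: its hypotheses explicitly require $0\le\lb<n$, and for any admissible $\lb<n$ the exponent relation $\frac1{q(x)}=\frac1{p(x)}-\frac{\al}{\lb}$ never coincides with the Sobolev relation $\frac1{q(x)}=\frac1{p(x)}-\frac{\al}{n}$ of Corollary \ref{pvc}, so no specialization of Theorem \ref{pv} produces the statement. Even if the endpoint $\lb=n$ were allowed, your identification of the norms fails on two counts. First, the modular defining $\|\cdot\|_{{\cal L}^{p(\cdot),n}(\Rn,d\mu)}$ carries the normalization $t^{n}/\mu(B(x,t))$ and a supremum over balls; it is comparable to the $L^{p(\cdot)}_{\om}(\Om)$ modular only if $\mu(B(x,t))\approx t^{n}$ uniformly in $x$ and $t$, which fails for general $\om\in A_{p(\cdot),q(\cdot)}(\Om)$ (already for a power weight $\om(x)=|x|^{\beta}$ with constant $p$ one has $\mu(B(0,t))\approx t^{n+\beta p}$, so the factor $t^{n}/\mu(B(0,t))$ degenerates to $0$ or $\infty$). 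Second, and more fundamentally, there is a weight-power mismatch on the target side: Theorem \ref{pv} lands in ${\cal L}^{q(\cdot),\lb}(\Rn,d\mu)$, whose modular integrates $|I^{\al}f|^{q(y)}$ against $d\mu(y)=\om(y)^{p(y)}dy$, whereas $\|I^{\al}f\|_{L^{q(\cdot)}_{\om}(\Om)}$ integrates $|I^{\al}f|^{q(y)}$ against $\om(y)^{q(y)}dy$. No substitution $f\mapsto f\om$ removes this discrepancy, and bridging $\om^{p(\cdot)}$ versus $\om^{q(\cdot)}$ is precisely the nontrivial content of the two-exponent $A_{p(\cdot),q(\cdot)}$ theory; it is not mere bookkeeping.

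For comparison: the paper does not derive Corollary \ref{pvc} from Theorem \ref{pv} at all. It quotes the corollary as a known result of \cite{CrWa} (where it follows from off-diagonal extrapolation and the weighted norm inequalities for the fractional integral under $\om\in A_{p(\cdot),q(\cdot)}$) and \cite{MizSh}, and the only argument the paper supplies is the remark preceding the corollary that the pointwise bound $M^{\al}f(x)\le C\,I^{\al}(|f|)(x)$ reduces $M^{\al}$ to $I^{\al}$. That reduction is the one part of your proposal that is correct as stated (pointwise domination plus the lattice property of the weighted norms). To complete your argument you would need to prove the weighted Lebesgue estimate itself, for instance by the extrapolation method of \cite{CrWa}, rather than attempting to read it off from the Morrey-with-measure Theorem \ref{pv}.
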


Let $M^\sharp$ be the sharp maximal function defined by
\begin{equation*}
M^\sharp f(x)=\sup\limits_{r>0}|B(x,r)|^{-1} \int_{\widetilde{B}%
	(x,r)}|f(y)-f_{\widetilde{B}(x,r)}|dy,
\end{equation*}
where $f_{\widetilde{B}(x,t)}(x)=|\widetilde{B}(x,t)|^{-1}  \int_{\widetilde{B}(x,t)} f(y) dy$.
\begin{definition}
	We define the $BMO(\Om)$ space  as the set of all
	locally integrable functions $f$ with finite norm
	$$
	\|f\|_{BMO}=\sup\limits_{x\in \Om}
	M^\sharp f(x)= \sup\limits_{x\in \Omega, \;r>0}|B(x,r)|^{-1} \int_{\widetilde{B}(x,r)}|f(y)-f_{\widetilde{B}(x,r)}|dy.
	$$
\end{definition}

\begin{definition}
	We define the $BMO_{p(\cdot),\om}(\Om)$ space  as the set of all
	locally integrable functions $f$ with finite norm
	$$
	\|f\|_{BMO_{p(\cdot),\om}}=\sup\limits_{x\in \Omega, \;r>0}\frac{\|(f(\cdot)-f_{\widetilde{B}(x,r)})\chi_{\widetilde{B}(x,r)}\|_{L^{p(\cdot)}_{\om}(\Om)}}
	{\|\chi_{\widetilde{B}(x,r)}\|_{L^{p(\cdot)}_{\om}(\Om)}}.
	$$
\end{definition}

\begin{theorem}\label{bmow}  \cite{Kwok2016} Let $\Om \subset \Rn$ be an open unbounded  set,  $p\in \mathbb{P}^{log}_\infty(\Om)$ and $\om$ be
	a Lebesgue measurable function. If $\om\in A_{p(\cdot)}(\Om)$, then the norms $\|\cdot\|_{BMO_{p(\cdot),\om}}$  and $\|\cdot\|_{BMO}$
	are mutually equivalent.
\end{theorem}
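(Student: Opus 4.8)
The statement is an equivalence of two norms, so the plan is to establish the two one-sided inequalities separately. Throughout I would fix a ball and write $B=\widetilde{B}(x,r)$, $f_B=f_{\widetilde{B}(x,r)}$, and use repeatedly the identity $\|h\chi_B\|_{L^{p(\cdot)}_\om(\Om)}=\|h\om\chi_B\|_{p(\cdot)}$ together with the generalized H\"older inequality $\int_B|uv|\,dy\le C\|u\|_{p(\cdot)}\|v\|_{p'(\cdot)}$ valid in variable exponent Lebesgue spaces.

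The inequality $\|f\|_{BMO}\le C\|f\|_{BMO_{p(\cdot),\om}}$ is the easy direction. Writing $|f-f_B|=|f-f_B|\om\cdot\om^{-1}$ and applying generalized H\"older on $B$ gives
$$
|B|^{-1}\int_B|f-f_B|\,dy\le C\,|B|^{-1}\,\|(f-f_B)\chi_B\|_{L^{p(\cdot)}_\om(\Om)}\,\|\om^{-1}\chi_B\|_{p'(\cdot)}.
$$
By the definition of $BMO_{p(\cdot),\om}$ the first factor is at most $\|f\|_{BMO_{p(\cdot),\om}}\,\|\chi_B\|_{L^{p(\cdot)}_\om(\Om)}=\|f\|_{BMO_{p(\cdot),\om}}\,\|\om\chi_B\|_{p(\cdot)}$, so the right-hand side is bounded by
$$
C\,\|f\|_{BMO_{p(\cdot),\om}}\,\Big(|B|^{-1}\,\|\om\chi_B\|_{p(\cdot)}\,\|\om^{-1}\chi_B\|_{p'(\cdot)}\Big)\le C\,[\om]_{A_{p(\cdot)}}\,\|f\|_{BMO_{p(\cdot),\om}},
$$
where the last step is precisely the $A_{p(\cdot)}(\Om)$ condition. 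Taking the supremum over all balls yields this direction.

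For the reverse inequality $\|f\|_{BMO_{p(\cdot),\om}}\le C\|f\|_{BMO}$ I would normalize $\|f\|_{BMO}=1$ and estimate $\|(f-f_B)\om\chi_B\|_{p(\cdot)}$ through the associate-space duality \eqref{hs2}: it suffices to bound $\int_B|f-f_B|\,\om\,|g|\,dy$ for every $g$ with $\|g\|_{p'(\cdot)}\le1$. Using the layer-cake formula with $E_t=\{y\in B:|f(y)-f_B|>t\}$ and generalized H\"older on each $E_t$ (noting $\|g\chi_{E_t}\|_{p'(\cdot)}\le1$),
$$
\int_B|f-f_B|\,\om\,|g|\,dy=\int_0^\infty\Big(\int_{E_t}\om\,|g|\,dy\Big)\,dt\le C\int_0^\infty\|\om\chi_{E_t}\|_{p(\cdot)}\,dt.
$$
The John--Nirenberg inequality (for the unweighted $BMO$ norm, applied on the sets $\widetilde{B}$) gives $|E_t|\le C_1|B|\,e^{-C_2 t}$, so the proof reduces to the weighted ``$A_\infty$-type'' estimate $\|\om\chi_E\|_{p(\cdot)}\le C\,(|E|/|B|)^{\delta}\,\|\om\chi_B\|_{p(\cdot)}$ for $E\subset B$ and some $\delta>0$; inserting it makes the $t$-integral converge and produces the bound $C\,\|\om\chi_B\|_{p(\cdot)}=C\,\|\chi_B\|_{L^{p(\cdot)}_\om(\Om)}$, as required.

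The main obstacle is exactly this last reverse-H\"older/$A_\infty$ estimate for the weight measured in $L^{p(\cdot)}$: it is the variable-exponent analogue of the classical self-improvement of Muckenhoupt weights and is the step that genuinely uses the hypothesis $\om\in A_{p(\cdot)}(\Om)$ (the same structural input that underlies the boundedness of $M$ on $L^{p(\cdot)}_\om(\Om)$ in Theorem \ref{max2}). Everything else --- the two H\"older applications, the duality \eqref{hs2}, and the unweighted John--Nirenberg inequality --- is routine. I expect the decay exponent $\delta$ to be supplied by the reverse H\"older property of $A_{p(\cdot)}$ weights, and the decay condition \eqref{hi8} at infinity to be what keeps the constants uniform over the unbounded set $\Om$.
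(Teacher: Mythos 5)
The paper offers no proof of this theorem --- it is imported verbatim from \cite{Kwok2016} --- so there is no in-paper argument to compare against, and your proposal must be judged on its own merits. Your first inequality, $\|f\|_{BMO}\le C\|f\|_{BMO_{p(\cdot),\om}}$, is correct and complete: H\"older's inequality for variable exponents plus the definition of $[\om]_{A_{p(\cdot)}}$ is all that is needed, exactly as you wrote. The reverse inequality, however, has a genuine gap at the decisive step. Your reduction (duality \eqref{hs2}, layer-cake, unweighted John--Nirenberg) is sound, but the estimate you then invoke,
\begin{equation*}
\|\om\chi_E\|_{L^{p(\cdot)}(\Om)}\le C\left(\frac{|E|}{|B|}\right)^{\delta}\|\om\chi_B\|_{L^{p(\cdot)}(\Om)},\qquad E\subset B,
\end{equation*}
is not an off-the-shelf fact: there is no ready-made ``reverse H\"older property of $A_{p(\cdot)}$ weights'' in the multiplier convention used in this paper. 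What the literature (Diening--H\"ast\"o \cite{HD}) provides concerns the \emph{measure} $\om(y)^{p(y)}dy$ and its membership in classical Muckenhoupt classes; converting that into your norm inequality forces you to pass between the norm and the modular, a step which is not scale invariant --- one must first normalize $\|\om\chi_B\|_{p(\cdot)}=1$ (using that $[\lambda^{-1}\om]_{A_{p(\cdot)}}=[\om]_{A_{p(\cdot)}}$ for constants $\lambda>0$), apply the classical $A_\infty$ decay to the rescaled measure, and then return to norms through the standard modular--norm inequalities involving the exponents $p_\pm$. None of this is routine, and it is precisely where the hypothesis $\om\in A_{p(\cdot)}(\Om)$ does its real work; as written, the hardest part of the theorem is assumed rather than proved.

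There is a standard way to close the gap that avoids your missing lemma entirely, and it is essentially the route of the cited source \cite{Kwok2016}. By the symmetry of the definition, $\om\in A_{p(\cdot)}(\Om)$ if and only if $\om^{-1}\in A_{p'(\cdot)}(\Om)$, so Theorem \ref{max2} gives the boundedness of $M$ on $L^{p'(\cdot)}_{\om^{-1}}(\Om)$, which by \eqref{hs2} plays the role of the associate space of $L^{p(\cdot)}_{\om}(\Om)$. Given $g\ge0$ with $\|g\om^{-1}\|_{p'(\cdot)}\le1$, the Rubio de Francia majorant $Rg=\sum_{k\ge0}2^{-k}\|M\|^{-k}M^{k}g$ satisfies $g\le Rg$, $\|Rg\,\om^{-1}\|_{p'(\cdot)}\le2$, and $Rg\in A_1$ with constant at most $2\|M\|$; the classical John--Nirenberg inequality for $A_\infty$ weights (which is proved by exactly your layer-cake computation, with measure decay in place of norm decay) then yields
\begin{equation*}
\int_{\widetilde{B}(x,r)}\bigl|f-f_{\widetilde{B}(x,r)}\bigr|\,g\,dy\le C\,\|f\|_{BMO}\int_{\widetilde{B}(x,r)}Rg\,dy\le C\,\|f\|_{BMO}\,\bigl\|\om\chi_{\widetilde{B}(x,r)}\bigr\|_{p(\cdot)},
\end{equation*}
and taking the supremum over such $g$ finishes the proof by duality. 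In other words, the $A_\infty$ self-improvement is applied to the genuine $A_1$ weight $Rg$ on the dual side, where it is classical, instead of to the variable-exponent norm functional on the primal side, where it has to be manufactured. One final caveat affecting both routes: John--Nirenberg is applied on $\widetilde{B}(x,r)=B(x,r)\cap\Om$, and for an arbitrary open set $\Om$ the John--Nirenberg inequality over such intersections is not automatic (\cite{Kwok2016} is formulated on $\Rn$); this is a weakness of the statement as transplanted to $\Om$ rather than of your argument, but it deserves a remark.
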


Everywhere in the sequel the functions $\vi(x,r),\ \vi_1(x,r)$ and
$\vi_2(x,r)$ used in the body of the paper, are non-negative measurable function on $\Om \times (0,\infty)$.
We find it convenient to define the variable exponent generalized weighted Morrey spaces in the form as follows.
\begin{definition}
	Let  $1 \le p(x) < \infty$, $x\in \Om$.  The variable exponent generalized Morrey space
	$\mathcal{M}^{p(\cdot),\vi}(\Om)$ and variable exponent generalized weighted Morrey space $\mathcal{M}^{p(\cdot),\vi}_{\om}(\Om)$ are  defined by the norms
	$$
	\|f\|_{\mathcal{M}^{p(\cdot),\vi}} =
	\sup\limits_{x\in\Om,r>0}\frac{1}{\vi(x,r)r^{\theta_{p}(x,r)}}
	\|f\|_{L^{p(\cdot)}(\widetilde{B}(x,r))},
	$$
	$$
	\|f\|_{\mathcal{M}^{p(\cdot),\vi}_{\om}} =
	\sup\limits_{x\in\Om,r>0}\frac{1}{\vi(x,r)\|\om\|_{L^{p(\cdot)}(\widetilde{B}(x,r))}}
	\|f\|_{L^{p(\cdot)}_{\om}(\widetilde{B}(x,r))}.
	$$
\end{definition}
According to this definition, we recover the space
$\mathcal{L}^{p(\cdot),\lambda(\cdot)} (\Om)$ under the choice
$\vi(x,r)=r^{\theta_{p}(x,r)-\frac{\lb(x)}{p(x)}}$:
$$\mathcal{L}^{p(\cdot),\lambda(\cdot)} (\Om)=
\mathcal{M}^{p(\cdot),\vi(\cdot)}
(\Om)\Bigg|_{\vi(x,r)=r^{\theta_{p}(x,r)-\frac{\lb(x)}{p(x)}}}.$$

\begin{definition} (Vanishing generalized weighted Morrey space) The vanishing generalized weighted Morrey space $V\mathcal{M}^{p(\cdot),\vi}_{\om}(\Om)$ is defined as the space of functions $f\in \mathcal{M}^{p(\cdot),\vi}_{\om}(\Om)$ such
	that
	$$
	\lim\limits_{r\to 0}\sup\limits_{x\in\Om} \frac{1}{\vi_1(x,t)\|\om\|_{L^{p(\cdot)}(\widetilde{B}(x,t))}} \|f\chi_{\widetilde{B}(x,t)}\|_{L^{p(\cdot)}_{\om}(\Om)}=0.
	$$
\end{definition}

Everywhere in the sequel we assume that
\begin{equation}\label{van1}
\lim\limits_{r\to 0} \frac{1}{\|\om\|_{L^{p(\cdot)}(\widetilde{B}(x,t))}\inf\limits_{x\in\Om}\vi(x,t)} =0
\end{equation}
and
\begin{equation}\label{van1}
\sup\limits_{0<r<\infty} \frac{1}{\|\om\|_{L^{p(\cdot)}(\widetilde{B}(x,t))}\inf\limits_{x\in\Om}\vi(x,t)} =0,
\end{equation}
which makes the spaces $V\mathcal{M}^{p(\cdot),\vi}_{\om}(\Om)$  non-trivial, because bounded functions
with compact support belong then to this space.

\begin{theorem}\label{M1}  \cite{GulHasBad}
	Let $\Om \subset \Rn$ be an open unbounded  set,  $p\in \mathbb{P}^{log}_\infty(\Om)$, $\om\in A_{p(\cdot)}(\Om)$ and the function $\vi_1(x,r)$ and $\vi_2(x,r)$ satisfy the condition
	\begin{equation}\label{qhs1sh}
	\sup_{t>r}\frac{\ei_{t<s<\infty} \vi_1(x, s) \|\om\|_{L^{p(\cdot)}(\widetilde{B}(x,s))}}{\|\om\|_{L^{p(\cdot)}(\widetilde{B}(x,t))}}\le C\vi_2(x,r).
	\end{equation}
	
	Then the maximal operator $M$ is
	bounded from the space $\mathcal{M}^{p(\cdot),\vi_1}_{\om} (\Om)$ the space
	$\mathcal{M}^{p(\cdot),\vi_2}_{\om} (\Om)$.
\end{theorem}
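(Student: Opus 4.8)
The plan is to reduce the theorem to a pointwise-in-$(x,r)$ local estimate of Guliyev type, namely
$$
\|Mf\|_{L^{\pc}_{\om}(\widetilde{B}(x,r))}
\lesssim
\|\om\|_{L^{\pc}(\widetilde{B}(x,r))}\,
\sup_{t>r}\frac{\|f\|_{L^{\pc}_{\om}(\widetilde{B}(x,t))}}{\|\om\|_{L^{\pc}(\widetilde{B}(x,t))}},
$$
and then to convert it into the Morrey bound by means of the hypothesis \eqref{qhs1sh}. To obtain the local estimate I would fix $x\in\Om$, $r>0$ and split $f=f_1+f_2$ with $f_1=f\chi_{\widetilde{B}(x,2r)}$ and $f_2=f-f_1$, so that $\|Mf\|_{L^{\pc}_{\om}(\widetilde{B}(x,r))}\le \|Mf_1\|_{L^{\pc}_{\om}(\widetilde{B}(x,r))}+\|Mf_2\|_{L^{\pc}_{\om}(\widetilde{B}(x,r))}$, and treat the near and far pieces separately.

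For the near part $f_1$ I would simply invoke Theorem \ref{max2}: since $\om\in A_{\pc}(\Om)$ and $p\in\mathbb{P}^{log}_\infty(\Om)$, the operator $M$ is bounded on $L^{\pc}_{\om}(\Om)$, whence
$$
\|Mf_1\|_{L^{\pc}_{\om}(\widetilde{B}(x,r))}\le \|Mf_1\|_{L^{\pc}_{\om}(\Om)}\lesssim \|f\|_{L^{\pc}_{\om}(\widetilde{B}(x,2r))}.
$$
Using the doubling property of $t\mapsto\|\om\|_{L^{\pc}(\widetilde{B}(x,t))}$ enjoyed by $A_{\pc}$ weights (which follows from the trivial lower bound $|B(x,t)|\lesssim\|\om\|_{L^{\pc}(\widetilde{B}(x,t))}\|\om^{-1}\|_{L^{p'(\cdot)}(\widetilde{B}(x,t))}$ together with the $A_{\pc}$ upper bound), the right-hand side is dominated by $\|\om\|_{L^{\pc}(\widetilde{B}(x,r))}$ times the supremum in the local estimate.

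The far part is the main obstacle. For $y\in\widetilde{B}(x,r)$ the support condition on $f_2$ forces any ball $B(y,s)$ that meets it to have radius $s>r$, and then $B(y,s)\subset B(x,2s)$; this yields the pointwise bound $Mf_2(y)\lesssim \sup_{s>r}|B(x,s)|^{-1}\int_{\widetilde{B}(x,s)}|f(z)|\,dz$. The inner integral is estimated by the variable-exponent Hölder inequality \eqref{hs2},
$$
\int_{\widetilde{B}(x,s)}|f(z)|\,dz=\int_{\widetilde{B}(x,s)}|f(z)|\om(z)\,\om(z)^{-1}\,dz\lesssim\|f\|_{L^{\pc}_{\om}(\widetilde{B}(x,s))}\,\|\om^{-1}\|_{L^{p'(\cdot)}(\widetilde{B}(x,s))},
$$
after which the defining inequality of the class $A_{\pc}(\Om)$ gives $|B(x,s)|^{-1}\|\om^{-1}\|_{L^{p'(\cdot)}(\widetilde{B}(x,s))}\lesssim\|\om\|_{L^{\pc}(\widetilde{B}(x,s))}^{-1}$. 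Since the resulting bound for $Mf_2(y)$ is constant in $y$, taking its $L^{\pc}_{\om}(\widetilde{B}(x,r))$ norm reproduces the factor $\|\om\|_{L^{\pc}(\widetilde{B}(x,r))}=\|\chi_{\widetilde{B}(x,r)}\|_{L^{\pc}_{\om}}$ and completes the local estimate.

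Finally, to pass to the Morrey norms I would exploit the monotonicity of $t\mapsto\|f\|_{L^{\pc}_{\om}(\widetilde{B}(x,t))}$: for any $s>t$ one has $\|f\|_{L^{\pc}_{\om}(\widetilde{B}(x,t))}\le\|f\|_{L^{\pc}_{\om}(\widetilde{B}(x,s))}\le\vi_1(x,s)\|\om\|_{L^{\pc}(\widetilde{B}(x,s))}\|f\|_{\mathcal{M}^{p(\cdot),\vi_1}_{\om}}$, and taking the infimum over $s>t$ introduces precisely the quantity $\ei_{t<s<\infty}\vi_1(x,s)\|\om\|_{L^{\pc}(\widetilde{B}(x,s))}$. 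Dividing the local estimate by $\vi_2(x,r)\|\om\|_{L^{\pc}(\widetilde{B}(x,r))}$, substituting this bound and invoking \eqref{qhs1sh} yields $\|Mf\|_{\mathcal{M}^{p(\cdot),\vi_2}_{\om}}\lesssim\|f\|_{\mathcal{M}^{p(\cdot),\vi_1}_{\om}}$, which is the claim. I expect the only genuinely delicate points to be the justification of the $A_{\pc}$-doubling of $\|\om\|_{L^{\pc}(\widetilde{B}(x,\cdot))}$ and the careful bookkeeping in the variable-exponent Hölder step.
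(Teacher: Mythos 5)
Your proposal cannot be checked against a proof of Theorem \ref{M1} in this paper, because the paper gives none: the theorem is quoted from \cite{GulHasBad}. The right comparison is with the scheme the paper uses for its Riesz-potential analogues (Theorem \ref{potv} followed by Theorem \ref{M1Xv}), and your proof follows exactly that two-step scheme: the splitting $f=f_1+f_2$ at radius $2r$, Theorem \ref{max2} for the near part, H\"older plus the $A_{\pc}(\Om)$ condition for the far part --- yielding a supremal analogue of the local estimate \eqref{newvK} --- and then conversion to Morrey norms via the structural condition \eqref{qhs1sh}. Where you genuinely depart from the paper's route is the conversion step: the paper runs this through boundedness theorems for Hardy/supremal operators on cones of monotone functions (Theorems \ref{GulJMS}, \ref{thm3.2.}, \ref{thm5.1}), whereas you argue directly, using monotonicity of $t\mapsto\|f\|_{L^{\pc}_{\om}(\widetilde{B}(x,t))}$ to insert $\ei_{t<s<\infty}\vi_1(x,s)\|\om\|_{L^{\pc}(\widetilde{B}(x,s))}$ and then invoking \eqref{qhs1sh}. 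Your version is more elementary, and it quietly sidesteps a point the operator-theoretic route must address, namely that $g(t)=\|f\|_{L^{\pc}_{\om}(\widetilde{B}(x,t))}$ lies in the admissible cone of non-decreasing functions; also, your passage from the pointwise infimum to the essential infimum is in the harmless direction, since $\inf_{s>t}\le\ei_{t<s<\infty}$.

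One step deserves a concrete flag. Your ``trivial lower bound'' $|B(x,t)|\lesssim\|\om\|_{L^{\pc}(\widetilde{B}(x,t))}\|\om^{-1}\|_{L^{p'(\cdot)}(\widetilde{B}(x,t))}$ is not what H\"older gives: integrating $1=\om\,\om^{-1}$ over $\widetilde{B}(x,t)=B(x,t)\cap\Om$ produces $|\widetilde{B}(x,t)|$ on the left, not $|B(x,t)|$. Consequently the doubling of $t\mapsto\|\om\|_{L^{\pc}(\widetilde{B}(x,t))}$ that your near-part estimate relies on really requires $|B(x,t)|\lesssim|B(x,t)\cap\Om|$ uniformly, i.e.\ a measure-density assumption on $\Om$, which can fail for a general unbounded open set (balls centered near an outward cusp of the boundary). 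This is not fatal in context --- the paper commits the same sin in \eqref{sal01}, where the step $|B|^{1-\frac{\al}{n}}\lesssim\|\om\|_{L^{q(\cdot)}(\widetilde{B}(x,t))}\|\om^{-1}\|_{L^{p'(\cdot)}(\widetilde{B}(x,t))}$ likewise needs $|\widetilde{B}|\approx|B|$ --- but you should state the assumption explicitly (or formulate the $A_{\pc}(\Om)$ class and the H\"older step consistently in terms of $|\widetilde{B}|$) rather than label the inequality trivial.
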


\

\section{Riesz potential and its commutators in the spaces
	$\mathcal{M}^{p(\cdot),\vi}_{\om}(\Om)$}\label{potentials}
\setcounter{theorem}{0} \setcounter{equation}{0}

It is well-known that the commutator is an important integral operator and it plays a key role in harmonic analysis. Let $b \in BMO(\Rn)$. A well known result of Chanillo \cite{Chanillo} states that the commutator operator $[b,I^{\alpha}] f = I^{\alpha} (bf) - b \, I^{\alpha} f$ is bounded from $L_p(\Rn)$ to $L_q(\Rn)$ with $1/q=1/p-\alpha/n$, $1<p<n/\alpha$.

Let  $L^\infty(\mathbb{R}_+,v)$ be the weighted $L^\infty$-space  with the norm
$$
\|g\|_{L^\infty(\mathbb{R}_+,v)} = \es_{t>0}v(t)g(t).
$$

In the sequel $\mathfrak{M}(\mathbb{R}_+),$   $\mathfrak{M}^+(\mathbb{R}_+)$ and
$\mathfrak{M}^+\!(\mathbb{R}_+;\!\uparrow\!)\!$ stand for the set of
Lebesgue-measurable functions on  $\mathbb{R}_+$, and its subspaces of  nonnegative and nonnegative non-decreasing
functions, respectively. We also denote
$$
{\mathbb A}=\left\{\varphi \in {\mathfrak M}^+(\mathbb{R}_+;\uparrow):
\lim_{t\rightarrow 0+}\varphi(t)=0\right\}.
$$
Let $u$ be a continuous and non-negative function on $\mathbb{R}_+$. We
define the supremal operator $\overline{S}_{u}$  by
$$
(\overline{S}_{u}g)(t): = \|u\, g\|_{L_{\i}(0,t)},~~t\in (0,\infty).
$$
In the following theorem  proved in \cite{BurGogGulMust1}, we use the notation
$$
\widetilde{v}_1(t)=\sup\limits_{0<\xi<t}v_1(\xi).
$$

\begin{theorem}\label{thm5.1}
	Suppose that $v_1$ and $v_2$ are nonnegative measurable functions such that $0 <
	\|v_1\|_{L_\infty(0,t)} <\infty$ for every $t > 0$. Let $u$ be a continuous nonnegative function on $\mathbb{R}$. Then the
	operator $\overline{S}_{u}$ is bounded from $L_\infty(\mathbb{R}_+, v_1)$ to $L_\infty(\mathbb{R}_+, v_2)$ on the cone ${\mathbb A}$ if and only if
	\begin{equation*}\label{eq6.8}
	\left\|v_2 \overline{S}_{u}\left(  \| v_1 \|^{-1}_{L_{\infty}(0,\cdot)}
	\right)\right\|_{L_{\infty}(\mathbb{R}_+)}<\infty.
	\end{equation*}
\end{theorem}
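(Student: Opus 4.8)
The assertion is a two-sided (testing) characterization of the weighted $L_\infty$ boundedness of the supremal operator $\overline{S}_u$, restricted to the monotone cone $\mathbb{A}$, so the plan is to prove necessity and sufficiency separately, in both directions pivoting on a single extremal competitor. By the positive homogeneity of both norms I would normalize $\|g\|_{L_\infty(\mathbb{R}_+,v_1)}\le 1$ throughout, and I would use repeatedly that $\overline{S}_u$ is order preserving: by its very definition as a supremum, $0\le g\le h$ pointwise forces $\overline{S}_u g\le \overline{S}_u h$ pointwise.

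The heart of the matter is to identify, among all $g$ in the cone with $\|g\|_{L_\infty(\mathbb{R}_+,v_1)}\le 1$, the pointwise largest one, call it $g_0$. The constraint reads $v_1 g\le 1$ a.e., and combining this pointwise bound with the monotonicity built into $\mathbb{A}$ produces an envelope governed by the running supremum $\widetilde{v}_1(t)=\sup_{0<\xi<t}v_1(\xi)$; concretely I expect $g_0$ to coincide with $\|v_1\|_{L_\infty(0,\cdot)}^{-1}=1/\widetilde{v}_1$, precisely the function whose image under $\overline{S}_u$ appears in the stated condition. The hypothesis $0<\|v_1\|_{L_\infty(0,t)}<\infty$ for all $t$ is exactly what makes $g_0$ well defined, finite, and strictly positive.

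For sufficiency, assume $B:=\big\|v_2\,\overline{S}_u(\|v_1\|_{L_\infty(0,\cdot)}^{-1})\big\|_{L_\infty(\mathbb{R}_+)}<\infty$ and take any $g\in\mathbb{A}$ with $\|g\|_{L_\infty(\mathbb{R}_+,v_1)}\le 1$. Since such $g$ is dominated by $g_0$, order preservation gives $\overline{S}_u g\le \overline{S}_u g_0$ pointwise, whence for a.e. $t$
$$
v_2(t)\,\overline{S}_u g(t)\le v_2(t)\,\overline{S}_u\!\big(\|v_1\|_{L_\infty(0,\cdot)}^{-1}\big)(t)\le B .
$$
Taking the supremum over $t$ yields $\|\overline{S}_u g\|_{L_\infty(\mathbb{R}_+,v_2)}\le B$, i.e. boundedness with operator norm at most $B$. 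For necessity, assume $\overline{S}_u$ is bounded on $\mathbb{A}$ with constant $C$; since $g_0=\|v_1\|_{L_\infty(0,\cdot)}^{-1}$ is (an admissible limit of elements) in the cone with $\|g_0\|_{L_\infty(\mathbb{R}_+,v_1)}\le 1$, testing the boundedness inequality on $g_0$ returns exactly $\|v_2\,\overline{S}_u g_0\|_{L_\infty(\mathbb{R}_+)}\le C$, which is the claimed finiteness.

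The main obstacle is the envelope step: justifying that the monotone-cone constraint forces $g\le g_0$ pointwise a.e. with $g_0=1/\widetilde{v}_1$, which is where the direction of monotonicity in $\mathbb{A}$ and the passage from $v_1$ to its running supremum $\widetilde{v}_1$ must be handled with care at the level of essential suprema. A secondary technical point is that $g_0$ may fail the normalization $\lim_{t\to 0+}g_0(t)=0$ defining $\mathbb{A}$, so in the necessity direction I would not test on $g_0$ directly but on a truncated, monotone approximation $g_0\wedge N$ (or $g_0$ restricted away from the origin), let the truncation parameter pass to the limit, and invoke the monotone convergence already encoded in the supremal structure to recover the full condition. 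Once the envelope inequality is in place, both implications are short; everything else is the order preservation of $\overline{S}_u$ and homogeneity. This is the supremal-operator counterpart of a Sawyer-type testing theorem, and I would, if needed, quote the abstract boundedness criterion for supremal operators on cones to package the limiting argument.
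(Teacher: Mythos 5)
The paper itself offers no proof of Theorem \ref{thm5.1} (it is quoted from \cite{BurGogGulMust1}), so your proposal must be measured against the known argument for that cited result --- and your overall scheme (monotone envelope of the unit ball, order preservation of $\overline{S}_u$, then testing on the envelope) is indeed exactly that argument. But the step you yourself flag as the heart of the matter, the envelope claim, is genuinely false for the statement as printed here. For $g$ \emph{non-decreasing}, the constraint $v_1g\le 1$ a.e.\ propagates only from the right: for every $s$ and a.e.\ $\xi>s$ one has $v_1(\xi)g(s)\le v_1(\xi)g(\xi)\le 1$, so the correct envelope is $g(s)\le \|v_1\|_{L_\infty(s,\infty)}^{-1}$; the values of $v_1$ on $(0,s)$ give no control of $g(s)$, because $g$ may jump upward just after the region where $v_1$ is large. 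Concretely, take $v_1=\chi_{(0,1)}$ (so $\|v_1\|_{L_\infty(0,t)}\equiv 1$ and the standing hypothesis holds) and $g=M\chi_{[1,\infty)}\in{\mathbb A}$ with $M>1$: then $v_1g\equiv 0$, yet $g=M>1=\|v_1\|_{L_\infty(0,\cdot)}^{-1}$ on $[1,\infty)$, so your bound $g\le g_0$ fails. Worse, the same example with $u\equiv v_2\equiv 1$ shows that the equivalence as transcribed in this paper is itself false: the testing quantity equals $1<\infty$, while $\|\overline{S}_u g\|_{L_\infty(\mathbb{R}_+,v_2)}=M$ and $\|g\|_{L_\infty(\mathbb{R}_+,v_1)}=0$, so no constant $C$ can work. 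The coherent version --- which is what \cite{BurGogGulMust1} actually proves, and the only one compatible with a cone of non-decreasing functions vanishing at the origin --- has the supremum taken over $(t,\infty)$, i.e.\ $\overline{S}_ug(t)=\|ug\|_{L_\infty(t,\infty)}$, hypothesis $0<\|v_1\|_{L_\infty(t,\infty)}<\infty$, and testing function $\|v_1\|^{-1}_{L_\infty(\cdot,\infty)}$; that function is the true envelope, it is non-decreasing, and both of your implications then go through essentially verbatim.

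A second, related defect: your repair of the necessity direction addresses the wrong problem. In the printed statement $g_0=\|v_1\|^{-1}_{L_\infty(0,\cdot)}=1/\widetilde v_1$ is \emph{non-increasing} (since $\widetilde v_1$ is non-decreasing), so $g_0$ fails to belong to ${\mathbb A}$ not because $\lim_{t\to 0+}g_0(t)\ne 0$ but because it is monotone in the wrong direction; vertical truncations $g_0\wedge N$ are still non-increasing, and no sequence in ${\mathbb A}$ increases to $g_0$, so there is nothing admissible to test on. In the corrected formulation, $g_0=\|v_1\|^{-1}_{L_\infty(\cdot,\infty)}$ is non-decreasing and only the normalization at the origin needs care --- for instance test on $g_0\chi_{(\tau,\infty)}\in{\mathbb A}$ and let $\tau\to 0+$, which is precisely the limiting argument you sketch. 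In summary: right method, wrong statement; as a proof of the theorem printed above your argument cannot be completed (nothing can complete it, since that statement is false), and it becomes a correct proof only after the intervals in the operator and in the weight condition are restored to $(t,\infty)$ and $(\cdot,\infty)$.
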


We will  use the following statement on the boundedness
of the weighted Hardy operator
$$
H_{w} g(t):=\int_t^{\i} g(s) w(s) ds, \,\,\, H^{\ast}_{w} g(t):=\int_t^{\i} \Big(1+\ln\frac{s}{t}\Big) g(s) w(s) ds,\,\,  \ \  0<t<\infty,
$$
where $w$ is a weight.

The following theorem was proved in \cite{GulJMS2013}.

\begin{theorem} \label{GulJMS}  \cite{GulJMS2013}
	Let $v_1, v_2$ and $w$ be weights on  $(0,\infty)$ and $v_1(t)$ be bounded outside a neighborhood of the
	origin. The inequality
	\begin{equation*}
	\sup _{t>0} v_2(t) H_{w} g(t) \leq C \sup _{t>0} v_1(t) g(t)
	\end{equation*}
	holds for some $C>0$ for all non-negative and non-decreasing $g$ on $(0,\infty)$ if and
	only if
	\begin{equation*}
	B:= \sup _{t>0} v_2(t)\int_t^{\i} \frac{w(s) ds}{\sup _{s<\tau<\infty} v_1(\tau)}<\infty.
	\end{equation*}
\end{theorem}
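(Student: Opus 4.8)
The plan is to prove the two implications separately, using throughout that $g$ ranges only over non-negative non-decreasing functions, so that monotonicity can be turned into a pointwise bound. Write $\sigma(s):=\sup_{s<\tau<\infty}v_1(\tau)$; this is a non-increasing function of $s$, so $1/\sigma$ is non-decreasing, a fact that will drive both directions. The hypothesis that $v_1$ be bounded outside a neighbourhood of the origin is used only to keep $\sigma$ finite and the relevant integrals well defined.

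\textbf{Sufficiency.} First I would show $B<\infty$ forces the inequality with $C=B$. Fix a non-negative non-decreasing $g$ and set $A:=\sup_{t>0}v_1(t)g(t)$, which I may assume finite. For every $s>0$ and every $\tau>s$, monotonicity gives $g(s)\le g(\tau)\le A/v_1(\tau)$, and taking the infimum over $\tau>s$ yields the key pointwise estimate $g(s)\le A/\sigma(s)$. Substituting this into $H_w g$ and invoking the definition of $B$,
\[
v_2(t)H_w g(t)=v_2(t)\int_t^{\infty}g(s)w(s)\,ds\le A\,v_2(t)\int_t^{\infty}\frac{w(s)\,ds}{\sigma(s)}\le A\,B,
\]
so that taking the supremum over $t>0$ gives $\sup_{t>0}v_2(t)H_w g(t)\le B\,\sup_{t>0}v_1(t)g(t)$, as required.

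\textbf{Necessity.} Conversely, assuming the inequality holds with constant $C$, I would recover $B\le C$ by testing against the function $g_{t_0}(s):=\chi_{[t_0,\infty)}(s)/\sigma(s)$ for an arbitrary fixed $t_0>0$. Since $1/\sigma$ is non-decreasing and non-negative, $g_{t_0}$ is admissible. Restricting the left-hand supremum to $t=t_0$ gives
\[
\sup_{t>0}v_2(t)H_w g_{t_0}(t)\ge v_2(t_0)\int_{t_0}^{\infty}\frac{w(s)\,ds}{\sigma(s)},
\]
while for $s\ge t_0$ one has $v_1(s)\le\sigma(s)$, whence $v_1(s)g_{t_0}(s)\le 1$ and therefore $\sup_{t>0}v_1(t)g_{t_0}(t)\le 1$. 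Feeding both bounds into the assumed inequality yields $v_2(t_0)\int_{t_0}^{\infty}w(s)/\sigma(s)\,ds\le C$, and since $t_0>0$ is arbitrary, $B\le C<\infty$.

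\textbf{Main obstacle.} The delicate point is entirely in the necessity direction, specifically the normalization $\sup_{s}v_1(s)g_{t_0}(s)\le 1$. The inequality $v_1(s)\le\sigma(s)$ can fail at isolated points because $\sigma$ is defined through the strict supremum $s<\tau$, so a naive spike of $v_1$ is not seen by $\sigma(s)$; this is precisely where the boundedness hypothesis on $v_1$ and the implicit essential-supremum convention enter. A careful treatment either replaces $\sigma$ by the least non-increasing majorant of $v_1$, or argues along a sequence of test parameters approaching the offending point, so that the normalizing constant is recovered in the limit. Once this admissibility and normalization step is secured, both implications are short, and the argument is essentially the specialization of the supremal-operator characterization in Theorem~\ref{thm5.1} to the weighted Hardy operator $H_w$.
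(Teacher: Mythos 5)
Your argument is essentially correct, and it cannot be "the same as the paper's" because the paper contains no proof of this statement: Theorem \ref{GulJMS} is imported verbatim from \cite{GulJMS2013}, so your self-contained test-function proof is a genuine addition. Your sufficiency half is complete as written: from $v_1(\tau)g(\tau)\le A$ and monotonicity of $g$ one gets $g(s)\,\sigma(s)\le A$ with $\sigma(s)=\sup_{s<\tau<\infty}v_1(\tau)$, and this yields the inequality with the sharp constant $C=B$. The obstacle you flag in the necessity half is genuine (the strict supremum indeed misses spikes of $v_1$), and your first proposed repair works and can be made rigorous in one line: let $\widetilde{\sigma}(s)=\sup_{s\le\tau<\infty}v_1(\tau)$ be the least non-increasing majorant of $v_1$, and test with $g_{t_0}=\chi_{[t_0,\infty)}/\widetilde{\sigma}$, which is admissible and satisfies $\sup_{t>0}v_1(t)g_{t_0}(t)\le 1$ since $v_1\le\widetilde{\sigma}$ everywhere. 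Since $\widetilde{\sigma}$ is monotone, it has at most countably many discontinuity points, and $\sigma(s)=\widetilde{\sigma}(s^{+})$ at every point, so $\sigma=\widetilde{\sigma}$ off a countable set; the integrands $w/\sigma$ and $w/\widetilde{\sigma}$ then agree Lebesgue-a.e., whence
\begin{equation*}
v_2(t_0)\int_{t_0}^{\infty}\frac{w(s)\,ds}{\sigma(s)}
= v_2(t_0)\int_{t_0}^{\infty}\frac{w(s)\,ds}{\widetilde{\sigma}(s)}
= v_2(t_0)\,H_w g_{t_0}(t_0)\le C,
\end{equation*}
and $B\le C$ follows; no limiting argument along a sequence of test parameters is needed. (As you note, the hypothesis that $v_1$ is bounded outside any neighborhood of the origin is what keeps $\widetilde{\sigma}(s)$ finite for $s>0$, so the test function is not degenerate.) One inaccuracy in your closing remark: this theorem is not a specialization of Theorem \ref{thm5.1}, which characterizes boundedness of the supremal operator $\overline{S}_u$ on the cone $\mathbb{A}$, not of the integral operator $H_{w}$; the two results are parallel (both proved by testing against $\chi_{[t_0,\infty)}$ times the reciprocal of a monotone envelope), but neither implies the other. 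Since that remark carries no weight in your proof, it is harmless.
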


\begin{theorem}\label{thm3.2.}  \cite{Gul_EMJ2012}
	Let $v_1$, $v_2$ and $w$ be weights on  $(0,\infty)$ and $v_1(t)$ be bounded outside a neighborhood of the
	origin. The inequality
	\begin{equation} \label{vav01}
	\sup _{t>0} v_2(t) H^{\ast}_{w} g(t) \leq C \sup _{t>0} v_1(t) g(t)
	\end{equation}
	holds for some $C>0$ for all non-negative and non-decreasing $g$ on $(0,\infty)$ if and
	only if
	\begin{equation*} \label{vav02}
	B:= \sup _{t>0} v_2(t)\int_t^{\i} \Big(1+\ln\frac{s}{t}\Big) \frac{w(s) ds}{\sup _{0<\tau<s} v_1(\tau)}<\infty.
	\end{equation*}
	Moreover, the value  $C=B$ is the best constant for  \eqref{vav01}.
\end{theorem}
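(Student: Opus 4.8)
The plan is to prove the equivalence and the sharpness claim simultaneously, by recognizing Theorem~\ref{thm3.2.} as the logarithmic analogue of Theorem~\ref{GulJMS}: the kernel of $H_{w}$ is replaced by the larger, $t$-dependent kernel $1+\ln\frac{s}{t}$, but the structural features that drive the argument---positivity of the operator and the fact that the competitors range over the cone $\mathfrak{M}^+(\mathbb{R}_+;\uparrow)$ of non-negative non-decreasing functions---are unchanged. Writing $N(g)=\sup_{t>0}v_1(t)g(t)$ and $F(g)=\sup_{t>0}v_2(t)H^{\ast}_{w}g(t)$, the requirement that \eqref{vav01} hold for \emph{every} admissible $g$ means that the least admissible constant is exactly
$$
C=\sup\Big\{\,F(g):\ g\in\mathfrak{M}^+(\mathbb{R}_+;\uparrow),\ N(g)\le 1\,\Big\}.
$$
Thus it suffices to solve this extremal problem and identify its value with $B$; this yields in one stroke both the ``if and only if'' and the optimality $C=B$.

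First I would prove sufficiency, $B<\infty\Rightarrow C\le B$. Fix $g$ non-decreasing with $N(g)<\infty$. Monotonicity of $g$ combined with $v_1(\tau)g(\tau)\le N(g)$ gives, for every $\tau>s$, the chain $g(s)\le g(\tau)\le N(g)/v_1(\tau)$, hence the envelope estimate $g(s)\le N(g)\big(\sup_{s<\tau<\infty}v_1(\tau)\big)^{-1}$; here the hypothesis that $v_1$ is bounded outside a neighborhood of the origin is exactly what guarantees this running supremum is finite and positive, so the bound is meaningful. Substituting into $H^{\ast}_{w}g(t)=\int_t^{\infty}\big(1+\ln\frac{s}{t}\big)g(s)\,w(s)\,ds$, the logarithmic factor is carried through verbatim and one obtains $v_2(t)H^{\ast}_{w}g(t)\le N(g)\,v_2(t)\int_t^{\infty}\big(1+\ln\frac{s}{t}\big)\frac{w(s)}{\sup_{s<\tau<\infty}v_1(\tau)}\,ds\le B\,N(g)$, where the last step is the definition of $B$ (the same normalization of $v_1$ as in Theorem~\ref{GulJMS}). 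Taking the supremum over $t$ gives $F(g)\le B\,N(g)$, i.e.\ \eqref{vav01} with $C\le B$.

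Next I would prove necessity together with sharpness, obtaining $B\le C$ and hence $C=B$. The idea is to test \eqref{vav01} on the pointwise largest admissible competitor, namely the greatest non-decreasing minorant of $1/v_1$, which is $g_0(s)=\big(\sup_{s<\tau<\infty}v_1(\tau)\big)^{-1}$; this is non-decreasing since $s\mapsto\sup_{s<\tau<\infty}v_1(\tau)$ is non-increasing, and it satisfies $N(g_0)\le 1$. Because $H^{\ast}_{w}$ is a positive operator, $F$ is monotone on the cone, so the supremum defining $C$ is attained at $g_0$; evaluating $F(g_0)$ reproduces precisely the integral defining $B$, whence $C=F(g_0)=B$, giving both $B\le C$ and, with the previous paragraph, $C=B$ and the equivalence $C<\infty\iff B<\infty$. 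An alternative route for the boundedness half is to write $1+\ln\frac{s}{t}=1+\int_t^{s}\frac{d\rho}{\rho}$ and apply Fubini, decomposing $H^{\ast}_{w}g(t)=H_{w}g(t)+\int_t^{\infty}\frac{1}{\rho}\,H_{w}g(\rho)\,d\rho$ and reducing the estimate to iterated use of Theorem~\ref{GulJMS}; but this does not by itself deliver the optimal constant, which is why I prefer the extremal-function computation.

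The step I expect to be the main obstacle is the sharpness assertion $C=B$. Proving $C\le B$ is routine once the envelope estimate is in place, but showing that $g_0$ genuinely saturates \eqref{vav01}---so that no admissible $g$ does strictly better---requires verifying that $g_0$ is itself admissible and that the interchange of the two suprema with the integral is legitimate even though the kernel $1+\ln\frac{s}{t}$ is unbounded as $s/t\to\infty$. The boundedness of $v_1$ away from the origin is used precisely at this delicate point, since it keeps $g_0$ well defined and $B$ non-degenerate.
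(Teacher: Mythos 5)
The paper contains no proof of Theorem \ref{thm3.2.} to compare against: the result is quoted from \cite{Gul_EMJ2012}. Judged on its own merits, your extremal-function argument is the standard proof of statements of this type and its logic is sound: by homogeneity the best constant is $C=\sup\{F(g):N(g)\le 1\}$; monotonicity of $g$ gives the envelope bound $g(s)\le N(g)\bigl(\sup_{s<\tau<\infty}v_1(\tau)\bigr)^{-1}$, which yields $F(g)\le B\,N(g)$; and since $g_0(s)=\bigl(\sup_{s<\tau<\infty}v_1(\tau)\bigr)^{-1}$ is itself non-decreasing with $N(g_0)\le 1$ and dominates a.e.\ every admissible competitor, positivity of the kernel gives $C=F(g_0)=B$. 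Your closing worry is unfounded: nothing needs to be interchanged, since only the pointwise domination $g\le g_0$ and the nonnegativity of $\bigl(1+\ln\frac{s}{t}\bigr)w(s)$ enter; the unboundedness of the kernel plays no role.

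The one substantive defect is that you have proved a theorem whose constant $B$ is not the one printed. The statement defines $B$ with $\sup_{0<\tau<s}v_1(\tau)$ in the denominator, while every step of your proof --- the envelope bound and the test function $g_0$ --- uses the tail supremum $\sup_{s<\tau<\infty}v_1(\tau)$; you identify the two silently (``the same normalization of $v_1$ as in Theorem \ref{GulJMS}''). They are genuinely different: for non-decreasing $g$ the chain $g(s)\le g(\tau)\le N(g)/v_1(\tau)$ is available only for $\tau>s$, so no envelope bound holds with $\sup_{0<\tau<s}$, and in fact the statement as printed is false. For instance, take $v_1(t)=\min(1,1/t)$, so that $\sup_{0<\tau<s}v_1(\tau)\equiv 1$; choose $w(s)=s^{-2}(1+\ln s)^{-2}$ for $s\ge 1$ and $v_2(t)=\bigl(\int_t^\infty(1+\ln\frac{s}{t})w(s)\,ds\bigr)^{-1}$. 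Then the printed $B$ equals $1$, yet the admissible function $g_0(s)=\max(1,s)$ has $N(g_0)=1$ while $\int_1^\infty(1+\ln s)\,s\,w(s)\,ds=\infty$, so \eqref{vav01} fails for every finite $C$. The tail-supremum version you actually proved is clearly the intended one: it is the exact logarithmic analogue of Theorem \ref{GulJMS}, and it is the form in which the theorem is applied later, since conditions \eqref{H1vk} and \eqref{rv5} are written with essential infima over the tail $s<r<\infty$, i.e.\ with $1/\sup_{s<\tau<\infty}v_1(\tau)$ under the substitution $v_1(r)=\varphi_1(x,r)^{-1}\|\omega\|_{L^{p(\cdot)}(\widetilde{B}(x,r))}^{-1}$ made in the proof of Theorem \ref{M1Xvc}. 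So your proof is correct for the theorem as intended and used, but you should state explicitly that you are correcting the denominator in the printed statement rather than reproducing it.
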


The following weighted local estimates are valid.
\begin{theorem} \label{potv}
	Let $\Om \subset \Rn$ be an open unbounded  set, $0<\al<n$, $p\in \mathbb{P}^{log}_\infty(\Om)$, $p_+<\frac {n}{\al} $, $\frac1{q(x)}=\frac 1{p(x)}-\frac {\al} {n}$, $\om\in A_{p(\cdot),q(\cdot)}(\Om)$.
	Then
	\begin{equation}\label{newvK}
	\|I^{\al} f\|_{L^{q(\cdot)}_{\om}(\widetilde{B}(x,t))} \le
	C  \|\om\|_{L^{q(\cdot)}(\widetilde{B}(x,t))} \int_{t}^\infty  \|f\|_{L^{p(\cdot)}_{\om}(\widetilde{B}(x,s))}
	\|\om\|_{L^{q(\cdot)}(\widetilde{B}(x,s))}^{-1} \frac{ds}{s},
	\end{equation}
	where $C$ does not depend on $f$, $x$ and $t.$
\end{theorem}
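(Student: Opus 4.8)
The plan is to use the standard local--plus--far decomposition of $f$ relative to the ball $\widetilde{B}(x,t)$. Fix $x\in\Om$ and $t>0$, and write $f=f_1+f_2$ with $f_1=f\chi_{\widetilde{B}(x,2t)}$ and $f_2=f\chi_{\Om\setminus\widetilde{B}(x,2t)}$, so that $I^{\al}f=I^{\al}f_1+I^{\al}f_2$ and it suffices to bound each term in $L^{q(\cdot)}_{\om}(\widetilde{B}(x,t))$. For the local part I would invoke the global weighted Sobolev bound of Corollary \ref{pvc}: since $\om\in A_{p(\cdot),q(\cdot)}(\Om)$, $\tfrac{1}{q(x)}=\tfrac{1}{p(x)}-\tfrac{\al}{n}$ and $p_+<n/\al$, the operator $I^{\al}$ is bounded $L^{p(\cdot)}_{\om}(\Om)\to L^{q(\cdot)}_{\om}(\Om)$, whence
$$
\|I^{\al}f_1\|_{L^{q(\cdot)}_{\om}(\widetilde{B}(x,t))}\le\|I^{\al}f_1\|_{L^{q(\cdot)}_{\om}(\Om)}\le C\,\|f\|_{L^{p(\cdot)}_{\om}(\widetilde{B}(x,2t))}.
$$

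For the far part I would use that for $y\in\widetilde{B}(x,t)$ and $z\notin\widetilde{B}(x,2t)$ one has $|y-z|\ge\tfrac12|x-z|$, which gives the pointwise bound $|I^{\al}f_2(y)|\le C\int_{\Om\setminus\widetilde{B}(x,2t)}|f(z)|\,|x-z|^{\al-n}\,dz$. Writing $|x-z|^{\al-n}=(n-\al)\int_{|x-z|}^{\infty}s^{\al-n-1}\,ds$ and applying Fubini, this is controlled by $C\int_{2t}^{\infty}s^{\al-n-1}\big(\int_{\widetilde{B}(x,s)}|f|\big)\,ds$. The crux is then the variable-exponent H\"older inequality \eqref{hs2} combined with the $A_{p(\cdot),q(\cdot)}$ condition: since $\|f\om\|_{L^{p(\cdot)}(\widetilde{B}(x,s))}=\|f\|_{L^{p(\cdot)}_{\om}(\widetilde{B}(x,s))}$ and $1-\tfrac{1}{p(x)}+\tfrac{1}{q(x)}=1-\tfrac{\al}{n}$, with $|B(x,s)|\approx s^{n}$ one obtains
$$
\int_{\widetilde{B}(x,s)}|f|\le C\,\|f\|_{L^{p(\cdot)}_{\om}(\widetilde{B}(x,s))}\,\|\om^{-1}\|_{L^{p'(\cdot)}(\widetilde{B}(x,s))}\le C\,s^{n-\al}\,\|f\|_{L^{p(\cdot)}_{\om}(\widetilde{B}(x,s))}\,\|\om\|_{L^{q(\cdot)}(\widetilde{B}(x,s))}^{-1}.
$$

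Substituting, the factor $s^{\al-n-1}\cdot s^{n-\al}$ collapses to $s^{-1}$, so
$$
|I^{\al}f_2(y)|\le C\int_{2t}^{\infty}\|f\|_{L^{p(\cdot)}_{\om}(\widetilde{B}(x,s))}\,\|\om\|_{L^{q(\cdot)}(\widetilde{B}(x,s))}^{-1}\frac{ds}{s},
$$
a bound independent of $y$; taking the $L^{q(\cdot)}_{\om}(\widetilde{B}(x,t))$ norm multiplies it by $\|\chi_{\widetilde{B}(x,t)}\|_{L^{q(\cdot)}_{\om}}=\|\om\|_{L^{q(\cdot)}(\widetilde{B}(x,t))}$, which is exactly the prefactor in \eqref{newvK}.

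Finally I would fold the local estimate into the same integral. Using the monotonicity of $s\mapsto\|f\|_{L^{p(\cdot)}_{\om}(\widetilde{B}(x,s))}$ and a doubling property of $s\mapsto\|\om\|_{L^{q(\cdot)}(\widetilde{B}(x,s))}$ (the latter a consequence of the $A_{p(\cdot),q(\cdot)}$ hypothesis), the integral restricted to $(2t,4t)$ already dominates $c\,\|f\|_{L^{p(\cdot)}_{\om}(\widetilde{B}(x,2t))}\,\|\om\|_{L^{q(\cdot)}(\widetilde{B}(x,2t))}^{-1}$, so that $\|f\|_{L^{p(\cdot)}_{\om}(\widetilde{B}(x,2t))}\le C\,\|\om\|_{L^{q(\cdot)}(\widetilde{B}(x,t))}\int_{2t}^{\infty}(\cdots)\frac{ds}{s}$. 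Adding the two contributions and enlarging the range of integration to $(t,\infty)$ yields \eqref{newvK}. The step I expect to be the main obstacle is the far-part H\"older estimate: making the exponent cancellation rigorous via the $A_{p(\cdot),q(\cdot)}$ constant, and verifying the doubling of $\|\om\|_{L^{q(\cdot)}(\widetilde{B}(x,s))}$ needed to absorb the local term; the decomposition and the application of Corollary \ref{pvc} are routine.
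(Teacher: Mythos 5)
Your proof is correct and follows essentially the same route as the paper: the same decomposition $f=f_1+f_2$ at radius $2t$, the same application of Corollary \ref{pvc} to the local part, and the same kernel-representation/Fubini/H\"older/$A_{p(\cdot),q(\cdot)}$ chain for the far part, ending with multiplication by $\|\chi_{\widetilde{B}(x,t)}\|_{L^{q(\cdot)}_{\om}}=\|\om\|_{L^{q(\cdot)}(\widetilde{B}(x,t))}$. The only cosmetic difference is how the local term is absorbed into the tail integral: you use monotonicity on $(2t,4t)$ plus doubling of $s\mapsto\|\om\|_{L^{q(\cdot)}(\widetilde{B}(x,s))}$, whereas the paper (the chain \eqref{sal01}) writes $\|f\|_{L^{p(\cdot)}_{\om}(\widetilde{B}(x,2t))}\approx|B|^{1-\al/n}\,\|f\|_{L^{p(\cdot)}_{\om}(\widetilde{B}(x,2t))}\int_{2t}^{\infty}s^{\al-n-1}\,ds$ and then applies monotonicity and the $A_{p(\cdot),q(\cdot)}$ condition; these are equivalent, since the doubling you invoke indeed follows from $A_{p(\cdot),q(\cdot)}$ combined with the H\"older lower bound $|B|^{1-\al/n}\lesssim\|\om\|_{L^{q(\cdot)}(\widetilde{B})}\|\om^{-1}\|_{L^{p'(\cdot)}(\widetilde{B})}$ and the monotonicity of $\|\om^{-1}\|_{L^{p'(\cdot)}(\widetilde{B}(x,\cdot))}$ --- exactly the ingredients the paper uses.
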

\begin{proof}
	We represent  $f$ as
	\begin{equation}\label{repr}
	f=f_1+f_2, \ \quad f_1(y)=f(y)\chi _{\widetilde{B}(x,2t)}(y),\quad
	f_2(y)=f(y)\chi _{\Om\backslash \widetilde{B}(x,2t)}(y), \ \quad t>0,
	\end{equation}
	and have
	$$
	I^{\al} f(x)=I^{\al} f_1(x)+I^{\al} f_2(x).
	$$
	By  Corollary \ref{pvc} we obtain
	\begin{align*}
	\|I^{\al} f_1\|_{L^{q(\cdot)}_{\om}(\widetilde{B}(x,t))}&\le
	\|I^{\al} f_1\|_{L^{q(\cdot)}_{\om}(\Om)}
	\le C \|f_1\|_{L^{p(\cdot)}_{\om}(\Om)}=C \|f\|_{L^{p(\cdot)}_{\om}(\widetilde{B}(x,2t))}.
	\end{align*}
	Then
	\begin{gather*}
	\|I^{\al} f_1\|_{L^{q(\cdot)}_{\om}(\widetilde{B}(x,t))}
	\le C\|f\|_{L^{p(\cdot)}_{\om}(\widetilde{B}(x,2t))},
	\end{gather*}
	where the constant $C$ is independent of $f$.
	
	On the other hand,
	\begin{align}  \label{sal01}
	\|f\|_{L^{p(\cdot)}_{\om}(\widetilde{B}(x,2t))} & \thickapprox |B|^{1-\frac{\a}{n}} \|f\|_{L^{p(\cdot)}_{\om}(\widetilde{B}(x,2t))}
	\int_{2t}^{\i}\frac{ds}{s^{n+1-\a}} \notag
	\\
	& \le |B|^{1-\frac{\a}{n}}
	\int_{2t}^{\i}\|f\|_{L^{p(\cdot)}_{\om}(\widetilde{B}(x,s))}\frac{ds}{s^{n+1-\a}}
	\\
	& \lesssim \|\om\|_{L^{q(\cdot)}(\widetilde{B}(x,t))} \|w^{-1}\|_{L^{p'(\cdot)}(\widetilde{B}(x,t))} \,
	\int_{t}^{\i}\|f\|_{L^{p(\cdot)}_{\om}(\widetilde{B}(x,s))} \frac{ds}{s^{n+1-\a}}    \notag
	\\
	& \lesssim \|\om\|_{L^{q(\cdot)}(\widetilde{B}(x,t))}
	\int_{t}^{\i}\|f\|_{L^{p(\cdot)}_{\om}(\widetilde{B}(x,s))} \, \|\om^{-1}\|_{L^{p'(\cdot)}(\widetilde{B}(x,s))} \, \frac{ds}{s^{n+1-\a}} \notag
	\\
	& \lesssim [\om]_{A_{p(\cdot),q(\cdot)}} \,
	\|\om\|_{L^{q(\cdot)}(\widetilde{B}(x,t))} \,
	\int_{t}^{\i}\|f\|_{L^{p(\cdot)}_{\om}(\widetilde{B}(x,s))} \, \|\om\|_{L^{q(\cdot)}(\widetilde{B}(x,s))}^{-1}\, \frac{ds}{s}. \notag
	\end{align}
	
	Taking into account that
	$$
	\|f\|_{L^{p(\cdot)}_{\om}(\widetilde{B}(x,t))} \le
	C  \|\om\|_{L^{q(\cdot)}(\widetilde{B}(x,t))} \int_{t}^\infty  \|f\|_{L^{p(\cdot)}_{\om}(\widetilde{B}(x,s))}
	\|\om\|_{L^{q(\cdot)}(\widetilde{B}(x,s))}^{-1} \frac{ds}{s},
	$$
	we get
	\begin{equation}\label{Ga8v}
	\|I^{\al} f_1\|_{L^{q(\cdot)}_{\om}(\widetilde{B}(x,t))} \le
	C   \|\om\|_{L^{q(\cdot)}(\widetilde{B}(x,t))} \int_{t}^\infty  \|f\|_{L^{p(\cdot)}_{\om}(\widetilde{B}(x,s))}
	\|\om\|_{L^{q(\cdot)}(\widetilde{B}(x,s))}^{-1} \frac{ds}{s}.
	\end{equation}
	
	When $|x-z|\le t$, $|z-y|\ge 2t,$ we have $\frac{1}{2} |z-y| \le
	|x-y|\le\frac{3}{2} |z-y|$, and therefore
	\begin{align*}
	|I^{\al} f_2(x)| &\le
	\int_{\Om\backslash \widetilde{B}(x,2t)}|z-y|^{\al -n}|f(y)| dy
	\\
	&\le 2^{n-\a} \int_{\Om\backslash \widetilde{B}(x,2t)}|x-y|^{\al-n}|f(y)| dy.
	\end{align*}
	
	We obtain
	\begin{align*}
	\int_{\Om\backslash \widetilde{B}(x,2t)}  |f(y)| dy
	& = \int_{\Om\backslash \widetilde{B}(x,2t)}|f(y)|
	\left(\int_{|x-y|}^\infty s^{\al-n -1}ds\right)dy
	\\
	& \lesssim \int_{2t}^\infty s^{\al-n -1}
	\left(\int_{\{y \in \Om : 2t\le|x-y|\le s \}}|f(y)|dy\right)ds
	\\
	& \lesssim \int_{t}^\infty s^{\al-n -1}
	\|f\|_{L^{p(\cdot)}_{\om}(\widetilde{B}(x,s))}
	\|\om^{-1}\|_{L^{p'(\cdot)}(\widetilde{B}(x,s))}ds
	\\
	& \lesssim \int_{t}^\infty  \|f\|_{L^{p(\cdot)}_{\om}(\widetilde{B}(x,s))}
	\|\om\|_{L^{q(\cdot)}(\widetilde{B}(x,s))}^{-1} \frac{ds}{s}.
	\end{align*}
	
	Hence
	\begin{align*}
	\|I^{\al} f_2\|_{L^{q(\cdot)}_{\om}(\widetilde{B}(x,t))} & \lesssim  \|\om\|_{L^{q(\cdot)}(\widetilde{B}(x,t))} \int_{t}^\infty  \|f\|_{L^{p(\cdot)}_{\om}(\widetilde{B}(x,s))}
	\|\om\|_{L^{q(\cdot)}(\widetilde{B}(x,s))}^{-1} \frac{ds}{s},
	\end{align*}
	which together with \eqref{Ga8v} yields \eqref{newvK}.
\end{proof}

\begin{theorem}\label{M1Xv}
	Let $\Om \subset \Rn$ be an open unbounded  set, $0<\al<n$, $p\in \mathbb{P}^{log}_\infty(\Om)$, $p_+<\frac {n}{\al} $, $\frac1{q(x)}=\frac 1{p(x)}-\frac {\al} {n}$, $\om\in A_{p(\cdot),q(\cdot)}(\Om)$ and the functions $\vi_1(x,t)$ and $\vi_2(x,t)$ fulfill  the
	condition
	\begin{equation}\label{H1v}
	\int_{t}^{\infty}\frac{\ei_{s<r<\infty} \vi_1(x,r) \|\om\|_{L^{p(\cdot)}(\widetilde{B}(x,r))}} {\|\om\|_{L^{q(\cdot)}(\widetilde{B}(x,s))}}\frac{ds}{s}
	\lesssim \vi_2(x,t).
	\end{equation}
	Then the operator $I^{\al}$ is bounded from ${\cal M}^{p(\cdot),\vi_1}_{\om}(\Om)$ to
	${\cal M}^{q(\cdot),\vi_2}_{\om}(\Om)$.
\end{theorem}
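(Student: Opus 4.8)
The plan is to upgrade the scale-by-scale estimate of Theorem \ref{potv} to a global Morrey bound by feeding it into the weighted Hardy inequality of Theorem \ref{GulJMS}. First I would fix $x\in\Om$ and abbreviate $g(s)=\|f\|_{L^{p(\cdot)}_{\om}(\widetilde{B}(x,s))}$; this is a nonnegative, non-decreasing function of $s$, since $\widetilde{B}(x,s)$ grows with $s$, so it is an admissible test function for Theorem \ref{GulJMS}. Dividing the conclusion \eqref{newvK} of Theorem \ref{potv} by $\vi_2(x,t)\,\|\om\|_{L^{q(\cdot)}(\widetilde{B}(x,t))}$ cancels the prefactor and leaves
$$\frac{\|I^{\al}f\|_{L^{q(\cdot)}_{\om}(\widetilde{B}(x,t))}}{\vi_2(x,t)\,\|\om\|_{L^{q(\cdot)}(\widetilde{B}(x,t))}} \le \frac{C}{\vi_2(x,t)} \int_t^\infty g(s)\,\|\om\|_{L^{q(\cdot)}(\widetilde{B}(x,s))}^{-1}\,\frac{ds}{s}.$$

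Next I would recognize the right-hand integral as $H_{w}g(t)$ with weight $w(s)=s^{-1}\|\om\|_{L^{q(\cdot)}(\widetilde{B}(x,s))}^{-1}$, and set $v_2(t)=\vi_2(x,t)^{-1}$ together with $v_1(t)=\bigl(\vi_1(x,t)\,\|\om\|_{L^{p(\cdot)}(\widetilde{B}(x,t))}\bigr)^{-1}$, so that $\sup_{t>0}v_1(t)g(t)\le\|f\|_{{\cal M}^{p(\cdot),\vi_1}_{\om}(\Om)}$. With these choices $\sup_{s<\tau<\infty}v_1(\tau)=\bigl(\ei_{s<\tau<\infty}\vi_1(x,\tau)\,\|\om\|_{L^{p(\cdot)}(\widetilde{B}(x,\tau))}\bigr)^{-1}$, and hence the finiteness constant $B$ of Theorem \ref{GulJMS} becomes exactly
$$B=\sup_{t>0}\frac{1}{\vi_2(x,t)}\int_t^\infty \frac{\ei_{s<r<\infty}\vi_1(x,r)\,\|\om\|_{L^{p(\cdot)}(\widetilde{B}(x,r))}}{\|\om\|_{L^{q(\cdot)}(\widetilde{B}(x,s))}}\,\frac{ds}{s},$$
which is finite, uniformly in $x$, precisely by hypothesis \eqref{H1v}. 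Theorem \ref{GulJMS} then gives $\sup_{t>0}v_2(t)H_{w}g(t)\le C\sup_{t>0}v_1(t)g(t)$ with $C$ independent of $x$, and taking the supremum first over $t>0$ and then over $x\in\Om$ in the displayed inequality yields $\|I^{\al}f\|_{{\cal M}^{q(\cdot),\vi_2}_{\om}(\Om)}\le C\|f\|_{{\cal M}^{p(\cdot),\vi_1}_{\om}(\Om)}$.

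The substantive content is entirely in matching the integral in \eqref{newvK} to the Hardy operator $H_{w}$ and in identifying the hypothesis \eqref{H1v} with the Muckenhoupt-type finiteness of $B$; no genuinely new estimate is required beyond Theorems \ref{potv} and \ref{GulJMS}. The only points needing care — and the mild technical obstacle — are verifying the monotonicity of $g$ and that $v_1$ is bounded outside a neighborhood of the origin as Theorem \ref{GulJMS} demands; the latter follows from the growth of $\|\om\|_{L^{p(\cdot)}(\widetilde{B}(x,t))}$ in $t$, which forces $v_1(t)\to 0$, consistent with the standing non-triviality assumptions on $\vi_1$.
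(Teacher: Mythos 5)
Your proposal is correct and follows essentially the same route as the paper: the paper's proof also combines the local estimate \eqref{newvK} of Theorem \ref{potv} with the weighted Hardy inequality of Theorem \ref{GulJMS}, using exactly your choices $v_2(t)=\vi_2(x,t)^{-1}$, $v_1(t)=\bigl(\vi_1(x,t)\,\|\om\|_{L^{p(\cdot)}(\widetilde{B}(x,t))}\bigr)^{-1}$, $g(s)=\|f\|_{L^{p(\cdot)}_{\om}(\widetilde{B}(x,s))}$, and the weight $w$ matching the integral in \eqref{newvK} (your $w(s)=s^{-1}\|\om\|_{L^{q(\cdot)}(\widetilde{B}(x,s))}^{-1}$ is in fact the consistent choice; the paper writes $p(\cdot)$ there, apparently a typo). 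You additionally check the hypotheses of Theorem \ref{GulJMS} (monotonicity of $g$ and boundedness of $v_1$ away from the origin) that the paper leaves implicit.
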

\begin{proof} Let $\om\in A_{p(\cdot),q(\cdot)}(\Om)$, by condition \eqref{H1v} and Theorems \ref{potv}, \ref{GulJMS} with
	\linebreak$v_2(r)=\vi_2(x,r)^{-1}$, $v_1(r)=\vi_1(x,r)^{-1} \|\om\|_{L^{p(\cdot)}(\widetilde{B}(x,r))}^{-1}$,
	$g(r)=\|f\|_{L^{p(\cdot)}_{\om}(\widetilde{B}(x,r))}$ and $w(r)=\|\om\|_{L^{p(\cdot)}(\widetilde{B}(x,r))}^{-1} r^{-1}$ we obtain
	\begin{align*}
	&\|I^{\al} f\|_{{\cal M}^{q(\cdot),\vi_2}_{\om}(\Om)}
	\\
	& \lesssim \sup_{x\in \Om, \;t>0} \frac{ 1}{\vi_2(x,t)} \int_{t}^{\infty}\|f\|_{L^{p(\cdot)}_{\om}(\widetilde{B}(x,s))}
	\|\om\|_{L^{q(\cdot)}(\widetilde{B}(x,s))}^{-1}\frac{ds}{s}
	\\
	& \lesssim \sup_{x\in \Om, \;t>0} \frac{1}{\vi_1(x,t)\|\om\|_{L^{p(\cdot)}(\widetilde{B}(x,t))}}\|f\|_{L^{p(\cdot)}_{\om}(\widetilde{B}(x,t))}
	= \|f\|_{\mathcal{M}^{p(\cdot),\vi_1}_{\om}(\Om)}.
	\end{align*}
\end{proof}

Now we consider the commutators Riesz potential defined by
$$
[b, I^{\alpha}]f(x)=\int_{\Rn} [b(x)-b(y)]f(y)|x-y|^{\alpha-n} dy.
$$

The commutator generated by $M$ and a suitable function $b$ is formally defined by
$$
[M,b]f = M(bf) - bM(f).
$$

Given a measurable function $b$ the maximal commutator is defined by
$$
M_b(f)(x) := \sup\limits_{r>0}|B(x,r)|^{-1}\int_{B(x,r)}|b(x) - b(y)||f(y)|dy
$$
for all $x\in \mathbb{R}^n$.

\begin{lemma} \label{SGH1} \cite{FR} Let $b\in BMO(\Rn)$, $1<s<\infty$.  Then
	\begin{equation*}
	M^\sharp([b,I^{\al}]f(x))\le
	C\|b\|_{BMO}\left[\left(M|I^{\al}f(x)|^{s}\right)^{\frac1{s}}+
	\left(M^{s\al}|f(x)|^{s}\right)^{\frac1{s}}\right],
	\end{equation*}
	where $C>0$ is independed of $f$
	and $x$.
\end{lemma}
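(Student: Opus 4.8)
The plan is to estimate the sharp maximal function of $[b,I^{\al}]f$ at a point $x$ by working ball by ball. Fix a ball $B = B(x,r)$ and, following the standard commutator technique, exploit the freedom in the definition of $M^{\sharp}$ to subtract a conveniently chosen constant. For each ball I would replace $b$ by $b - b_{\wt B}$, where $b_{\wt B}$ is the average of $b$ over $\wt B(x,r)$; since $[b,I^{\al}] = [b - b_{\wt B}, I^{\al}]$ modulo a constant that $M^{\sharp}$ annihilates, this costs nothing and lets me bring in the $BMO$ seminorm of $b$. Writing out the kernel identity
\[
[b,I^{\al}]f(y) = (b(y)-b_{\wt B})\, I^{\al}f(y) - I^{\al}\big((b-b_{\wt B})f\big)(y),
\]
I then split $f = f_1 + f_2$ with $f_1 = f\chi_{\wt B(x,2r)}$ and $f_2 = f\chi_{\Om\setminus\wt B(x,2r)}$, exactly as in the proof of Theorem \ref{potv}. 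This produces three kinds of terms: the factor $(b(y)-b_{\wt B})\,I^{\al}f(y)$, the local piece $I^{\al}((b-b_{\wt B})f_1)$, and the tail piece $I^{\al}((b-b_{\wt B})f_2)$.

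\textbf{Key steps in order.} First I would bound the averaged oscillation coming from $(b(y)-b_{\wt B})\,I^{\al}f(y)$. Applying H\"older's inequality with exponent $s$ and its conjugate $s'$ over the ball, the $b$-factor is controlled by the $L^{s'}$-oscillation of $b$, which by the John--Nirenberg inequality is $\lesssim \|b\|_{BMO}$, while the remaining factor yields $\big(M|I^{\al}f|^{s}\big)^{1/s}(x)$; this is precisely the first term on the right-hand side. Second, for the local piece $I^{\al}((b-b_{\wt B})f_1)$ I would again use H\"older with exponent $s$, estimate $I^{\al}$ on $L^{s}$ via its mapping into the appropriate higher Lebesgue space, and absorb the oscillation of $b$ through John--Nirenberg; a short computation involving the measure of the enlarged ball converts the resulting average into $\big(M^{s\al}|f|^{s}\big)^{1/s}(x)$, the second term. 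Third, for the far tail $I^{\al}((b-b_{\wt B})f_2)$ I would use the pointwise kernel comparison $\tfrac12|z-y|\le|x-y|\le\tfrac32|z-y|$ for $z\in B$ and $y\notin\wt B(x,2r)$ — the very estimate used in Theorem \ref{potv} — to replace the kernel by a constant-in-$z$ quantity, so that the difference against its average over $B$ telescopes and is again dominated, after H\"older and John--Nirenberg on dyadic annuli, by $\|b\|_{BMO}\big(M^{s\al}|f|^{s}\big)^{1/s}(x)$.

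\textbf{Main obstacle.} The delicate point is the far tail term: one must check that, when forming $M^{\sharp}$, the average of $I^{\al}((b-b_{\wt B})f_2)(z)$ over $z\in B$ cancels enough of the pointwise value so that only the \emph{oscillation} of the kernel — not its size — enters, since otherwise the tail does not converge under the log-type growth of $b$. Making this cancellation quantitative requires a careful mean-value estimate on the kernel difference $|z-y|^{\al-n} - |x-y|^{\al-n}$ together with the standard $BMO$ bound $|b_{\wt B(x,2^{k+1}r)} - b_{\wt B(x,r)}| \lesssim k\,\|b\|_{BMO}$ on successive dyadic averages, and it is here that the linear growth in $k$ must be summed against the geometric decay $2^{k(\al-n)}$ of the kernel to keep the series finite. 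The remaining first and local terms are routine applications of H\"older and John--Nirenberg once the decomposition is set up. Since the statement is quoted from \cite{FR}, I would present this as a sketch and refer the reader there for the full dyadic-annulus summation.
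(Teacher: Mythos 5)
The paper gives no proof of this lemma at all — it is quoted directly from Di Fazio--Ragusa \cite{FR} — and your sketch correctly reconstructs the standard argument used there: subtract $b_{\widetilde B}$, split $f=f_1+f_2$, handle $(b-b_{\widetilde B})I^{\al}f$ by H\"older and John--Nirenberg, the local piece by the $L^t\to L^q$ boundedness of $I^{\al}$, and the tail by the kernel mean-value estimate on dyadic annuli, so this is essentially the same (and the intended) proof. One bookkeeping correction: on the $k$-th annulus the convergent factor is $(1+k)2^{-k}$, where the $2^{-k}$ comes from the oscillation gain $r/|x-z|\sim 2^{-k}$ supplied by the mean-value estimate; the raw kernel size $2^{k(\al-n)}$ is exactly absorbed by the annulus mass once you normalize by $\bigl(M^{s\al}|f|^{s}\bigr)^{1/s}(x)$, so it is not itself the source of convergence (and note the argument implicitly needs $s\al<n$ for $M^{s\al}$ to be meaningful, which is how $s$ is chosen in the paper's application).
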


\begin{lemma} \label{KS} \cite{CrWa} Let $\Om \subset \Rn$ be an open unbounded  set, $p\in \mathbb{P}^{log}_\infty(\Om)$ and $\om\in A_{p(\cdot)}(\Om)$. Then
	$$
	\|f\om\|_{L^{p(\cdot)}}\le C \|\om M^\sharp f\|_{L^{p(\cdot)}}
	$$
	with a constant $C > 0$ not depending on $f$.
\end{lemma}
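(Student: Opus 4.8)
The estimate is the weighted variable-exponent form of the Fefferman--Stein sharp maximal inequality, which in the notation $\|f\om\|_{L^{p(\cdot)}}=\|f\|_{L^{p(\cdot)}_{\om}}$ reads $\|f\|_{L^{p(\cdot)}_{\om}}\le C\,\|M^{\sharp}f\|_{L^{p(\cdot)}_{\om}}$. The plan is to reduce it to a scalar constant-exponent weighted inequality and then lift that inequality to variable exponents by Rubio de Francia extrapolation. The scalar input is classical: for every $0<p<\infty$ and every $w\in A_{\infty}$ one has $\int_{\Om}(Mf)^{p}\,w\,dx\le C\int_{\Om}(M^{\sharp}f)^{p}\,w\,dx$ whenever the left-hand side is finite, the constant depending only on $p$, $n$ and $[w]_{A_{\infty}}$; this is the good-$\lambda$ inequality of Fefferman and Stein in its $A_{\infty}$-weighted refinement. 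Since $|f(x)|\le Mf(x)$ for a.e.\ $x$ by the Lebesgue differentiation theorem, the same bound controls $\int_{\Om}|f|^{p}\,w\,dx$, so at the scalar level the pair $(|f|,M^{\sharp}f)$ satisfies the required two-function inequality for all $A_{\infty}$ weights.

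With this scalar estimate at hand I would invoke the weighted extrapolation theorem in variable Lebesgue spaces, which is exactly the content furnished by \cite{CrWa}, applied to the family of pairs $(|f|,M^{\sharp}f)$. The hypothesis of that machinery is precisely the scalar inequality above, valid for every $w\in A_{1}$. What must be checked is that the target space $L^{p(\cdot)}_{\om}(\Om)$ is admissible, and this is guaranteed by the standing assumptions: since $p\in\mathbb{P}^{log}_\infty(\Om)$ and $\om\in A_{p(\cdot)}(\Om)$, Theorem \ref{max2} yields that $M$ is bounded on $L^{p(\cdot)}_{\om}(\Om)$; moreover $A_{p(\cdot)}(\Om)$ is the diagonal case $q=p$ of the class $A_{p(\cdot),q(\cdot)}(\Om)$, so the duality Lemma of Section~2 gives $\om^{-1}\in A_{p'(\cdot)}(\Om)$, and applying Theorem \ref{max2} to $p'$ and $\om^{-1}$ shows that $M$ is also bounded on the associate space $L^{p'(\cdot)}_{\om^{-1}}(\Om)$. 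These two boundedness properties are exactly what the Rubio de Francia iteration requires, so extrapolation transfers the scalar inequality to $\|f\om\|_{L^{p(\cdot)}}\le C\,\|(M^{\sharp}f)\,\om\|_{L^{p(\cdot)}}$, as claimed.

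Two points need care. The first is the a priori finiteness demanded by the scalar Fefferman--Stein estimate: it holds only when $\int(Mf)^{p}w<\infty$, which is not automatic for a general locally integrable $f$. I would dispose of this in the usual way, proving the inequality first for $f$ in a dense subclass --- bounded functions of compact support, or functions already known to lie in $L^{p(\cdot)}_{\om}(\Om)$ --- for which the relevant scalar norms are finite, and then removing the restriction by a density and Fatou argument. The second, and the one I expect to be the genuine obstacle, is matching the paper's variable Muckenhoupt class $A_{p(\cdot)}(\Om)$ with the scalar weight hypotheses under which extrapolation operates, since one cannot simply read off a pointwise $A_{\infty}$ condition for $\om$. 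This is precisely why the argument is routed through the two $M$-boundedness statements above: the variable-exponent extrapolation of \cite{CrWa} is formulated to take the boundedness of $M$ on $L^{p(\cdot)}_{\om}$ and on its associate space as its working hypothesis, rather than any pointwise constraint on the weight.
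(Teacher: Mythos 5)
The paper gives no proof of this lemma at all: it is imported verbatim from \cite{CrWa}, so there is nothing internal to compare your argument against, and the only fair assessment is of your proof on its own merits. Judged that way, your argument is correct and is essentially the proof one finds in the cited reference: the pairs $(|f|,M^{\sharp}f)$ satisfy the constant-exponent Fefferman--Stein inequality for $A_{\infty}$ weights (good-$\lambda$ estimate plus $|f|\le Mf$ a.e.), and the Rubio de Francia extrapolation theory for weighted variable Lebesgue spaces developed in \cite{CrWa} lifts this to $\|f\om\|_{L^{p(\cdot)}}\le C\|\om M^{\sharp}f\|_{L^{p(\cdot)}}$. Your verification of the extrapolation hypotheses is also the right one: Theorem \ref{max2} gives boundedness of $M$ on $L^{p(\cdot)}_{\om}(\Om)$, and since $A_{p(\cdot)}(\Om)$ is the diagonal case $q(\cdot)=p(\cdot)$ of $A_{p(\cdot),q(\cdot)}(\Om)$, the duality lemma of Section 2 yields $\om^{-1}\in A_{p'(\cdot)}(\Om)$, hence $M$ is bounded on the associate space $L^{p'(\cdot)}_{\om^{-1}}(\Om)$ (noting $p'\in\mathbb{P}^{log}_{\infty}(\Om)$ whenever $p$ is); these two facts are exactly what the pair of Rubio de Francia iteration algorithms require. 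Three small points deserve attention. First, you state the scalar inequality for all $w\in A_{\infty}$ but then describe the extrapolation hypothesis as an $A_{1}$ condition; either version of extrapolation can serve, but you should fix one and quote it consistently ($A_{\infty}$ extrapolation is the natural choice, as it imposes no lower bound on $p_{-}$ coming from a fixed starting exponent). Second, your caveat about a priori finiteness is not merely a technicality but essential: for $f\equiv 1$ one has $M^{\sharp}f=0$ while $\|f\om\|_{L^{p(\cdot)}}>0$, so the inequality cannot hold unrestrictedly, and the lemma as stated in the paper silently carries the same restriction to a suitable class of $f$ --- your truncation/density argument is what makes the statement meaningful. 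Third, both the scalar inequality and the extrapolation theorem of \cite{CrWa} are proved on $\Rn$, whereas the lemma is stated on an open set $\Om$ with $M$, $M^{\sharp}$ and $A_{p(\cdot)}$ built from the relative balls $\widetilde{B}(x,r)=B(x,r)\cap\Om$; since the zero extension of $f$ to $\Rn$ need not have a comparable sharp maximal function, the passage from $\Rn$ to $\Om$ requires an explicit word --- though this imprecision is inherited from the paper itself, which cites a result proved on $\Rn$ for a statement on $\Om$.
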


\begin{theorem} \label{mus} \cite[Theorem 1.13]{AGKM} Let $b \in BMO(\Rn)$. Suppose that $X$ is a Banach space of measurable
	functions defined on $\Rn$. Assume that $M$ is bounded on $X$. Then the operator $M_b$
	is bounded on $X$, and the inequality
	$$
	\|M_bf\|_X \le  C\|b\|_* \|f\|_X
	$$
	holds with constant $C$ independent of $f$.
\end{theorem}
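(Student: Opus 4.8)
The plan is to deduce the $X$-boundedness of $M_b$ from the corresponding scale of weighted $L^p$-estimates by extrapolation, rather than from a pointwise domination. The first step is to record the classical weighted bound: for every $b\in BMO(\Rn)$, every $1<p<\infty$ and every Muckenhoupt weight $w\in A_p$ one has
\[
\|M_b f\|_{L^p(w)}\le C\,\|b\|_{BMO}\,\|f\|_{L^p(w)},\qquad C=C(n,p,[w]_{A_p}).
\]
This is standard and may be assembled from the generalized H\"older inequality in the Orlicz duality $(\exp L,\,L\log L)$, the John--Nirenberg inequality, and a good-$\lambda$ (equivalently $A_\infty$) argument. The second step is the transfer to $X$: since $X$ is a Banach function space on which $M$ is bounded — and, in the setting at hand, so is $M$ on the associate space $X'$ — the Rubio de Francia extrapolation theorem for Banach function spaces upgrades the whole $A_p$-family of weighted inequalities to the single estimate $\|M_b f\|_X\le C\,\|b\|_{BMO}\,\|f\|_X$, which is exactly the assertion.

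The conceptual reason I would avoid the naive route is that $M_b$ admits no pointwise majorant by an iterate of $M$. One might hope for $M_b f(x)\le C\|b\|_{BMO}\,M(Mf)(x)$, equivalently (via the Pérez equivalence $M_{L\log L}f\approx M(Mf)$) $M_b f(x)\le C\|b\|_{BMO}\,M_{L\log L}f(x)$, which would make the theorem immediate from $M$ bounded on $X$. This fails: for $b(y)=\log|y|\in BMO$ and $f=\chi_{B(0,1)}$ one finds $M_b f(x)\gtrsim\log(1/|x|)$ as $x\to0$, whereas $M(Mf)\approx M_{L\log L}f\approx 1$ near the origin. The discrepancy lives entirely in the term that carries the value $b(x)$ at the centre, namely
\[
\sup_{r>0}\,\bigl|b(x)-b_{B(x,r)}\bigr|\;\frac{1}{|B(x,r)|}\int_{B(x,r)}|f|,
\]
where a logarithmically growing oscillation competes with a shrinking average; the product is tame only after integration against a weight, not pointwise.

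Accordingly, the heart of the argument is the weighted estimate, and I would organise it through the splitting $b(x)-b(y)=(b(x)-b_{B(x,r)})+(b_{B(x,r)}-b(y))$. The second piece yields a ``good'' term $\sup_r|B(x,r)|^{-1}\int|b(y)-b_{B(x,r)}|\,|f(y)|\,dy$ which is dominated pointwise by $C\|b\|_{BMO}\,M_{L\log L}f\approx C\|b\|_{BMO}\,M(Mf)$ via H\"older in $(\exp L,L\log L)$ and John--Nirenberg, hence is harmless in every $L^p(w)$. The centre term is the one requiring the full machinery, and controlling it in $L^p(w)$ with a bound uniform in $[w]_{A_p}$ is the step I expect to be the main obstacle. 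Once that weighted estimate is secured, the only remaining point is to check the hypotheses of the extrapolation theorem, i.e.\ that $M$ is bounded on both $X$ and its associate space $X'$; in the present paper this holds for $X=L^{p(\cdot)}_{\om}(\Om)$ by the $A_{p(\cdot)}$-theory recalled in Section~\ref{Preliminaries} (Theorem~\ref{max2} together with the weight duality $\om\mapsto\om^{-1}$ established at the start of that section).
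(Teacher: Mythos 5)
The paper contains no proof of this statement: Theorem \ref{mus} is imported verbatim from \cite[Theorem 1.13]{AGKM} and used as a black box to deduce Corollary \ref{Maxkomv}, so your attempt can only be compared with the known arguments for $M_b$, not with an internal one. Measured that way, your negative analysis is correct and in fact sharper than you exploit: since $f=\chi_{B(0,1)}\in L^\infty$ and $M$ is bounded on $L^\infty$, your own computation $M_bf(x)\gtrsim\log(1/|x|)$ for $b(y)=\log|y|$ (indeed $M_b1\equiv\infty$ for this $b$) shows that the theorem is \emph{false as literally stated}, with $X=L^\infty$ as a counterexample. Hence no blind proof of the verbatim statement can succeed; some hypothesis excluding this case is unavoidable, and the one you quietly add --- that $M$ be bounded on the associate space $X'$ as well (note $X'=L^1$ in the counterexample, where $M$ is unbounded) --- is exactly the standard repair. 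With that hypothesis your two-step scheme is sound: the uniform $A_p$-weighted bound $\|M_bf\|_{L^p(w)}\le C(n,p,[w]_{A_p})\,\|b\|_{BMO}\,\|f\|_{L^p(w)}$ is indeed in the literature (Garc\'ia-Cuerva--Harboure--Segovia--Torrea; Hu--Lin--Yang), and Rubio de Francia extrapolation in Banach function spaces then gives $\|M_bf\|_X\le C\|b\|_*\|f\|_X$. This covers the only instance the paper actually uses, $X=L^{p(\cdot)}_{\om}(\Om)$ with $\om\in A_{p(\cdot)}$ (Corollary \ref{Maxkomv}), where $M$ is bounded on $X'=L^{p'(\cdot)}_{\om^{-1}}$ by Theorem \ref{max2} together with the duality $\om\in A_{p(\cdot)}\Leftrightarrow\om^{-1}\in A_{p'(\cdot)}$; extrapolation in the variable-exponent weighted setting is available in \cite{CrWa}, which the paper already invokes for Corollary \ref{pvc} and Lemma \ref{KS}.

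The one genuine gap is that the heart of your Step 1 is asserted rather than proved. You call the weighted estimate ``standard,'' yet concede that the centre term $\sup_{r>0}|b(x)-b_{B(x,r)}|\,|B(x,r)|^{-1}\int_{B(x,r)}|f|$ is ``the main obstacle.'' The part you do establish --- the mean-oscillation term, controlled by $C\|b\|_*M_{L\log L}f\approx C\|b\|_*M(Mf)$ via John--Nirenberg and generalized H\"older in the $(\exp L, L\log L)$ pairing --- is the easy half, harmless in every $L^p(w)$ with $w\in A_p$ because $M$ is bounded there. To close the hard half you need a genuinely weighted argument, for instance sparse domination: for a suitable sparse family $\mathcal{S}$ one has $M_bf(x)\le C\sum_{Q\in\mathcal{S}}\bigl(|b(x)-b_Q|\,\langle|f|\rangle_Q+\langle|b-b_Q|\,|f|\rangle_Q\bigr)\chi_Q(x)$ with $\langle|f|\rangle_Q=|Q|^{-1}\int_Q|f|$, and the frozen-symbol piece is then handled in $L^p(w)$, $w\in A_p\subset A_\infty$, by the weighted John--Nirenberg estimate $\langle|b-b_Q|^p\,w\rangle_Q\le C\|b\|_*^p\,\langle w\rangle_Q$ combined with the usual sparse packing argument; alternatively one cites the weighted bound for $M_b$ directly from the references above. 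Until that step is supplied, your proposal is a plan rather than a proof --- though, to be fair, a plan for a corrected statement, since the statement as quoted is irreparable without the associate-space hypothesis you introduced.
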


\begin{corollary} \label{Maxkomv} Let  $b\in BMO(\Om)$, $p\in \mathbb{P}^{log}_\infty(\Om)$ and $\om\in A_{p(\cdot)}(\Om)$,
	then the operator $M_b$ is {bounded on} $L^{p(\cdot)}_{\om} (\Rn)$.
\end{corollary}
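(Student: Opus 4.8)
The plan is to deduce Corollary \ref{Maxkomv} as a direct specialization of Theorem \ref{mus}. The key observation is that Theorem \ref{mus} is an abstract result: it asserts that whenever $b\in BMO(\Rn)$ and $X$ is a Banach space of measurable functions on which the Hardy-Littlewood maximal operator $M$ is bounded, the maximal commutator $M_b$ is automatically bounded on $X$ with operator norm controlled by $\|b\|_*$. So the entire task reduces to exhibiting the space $L^{p(\cdot)}_{\om}(\Rn)$ as an admissible $X$, that is, checking the two structural hypotheses of the abstract theorem.

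First I would verify that $L^{p(\cdot)}_{\om}(\Rn)$ is a Banach space of measurable functions. This is immediate from the preliminaries: $L^{p(\cdot)}(\Rn)$ is a Banach function space (stated just after the definition of $I_{p(\cdot)}$), and the weighted norm $\|f\|_{L^{p(\cdot)}_{\om}} = \|f\om\|_{L^{p(\cdot)}}$ inherits completeness and the norm axioms through the isometry $f\mapsto f\om$, since $\om$ is a positive locally integrable weight. Thus $X = L^{p(\cdot)}_{\om}(\Rn)$ qualifies.

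Second, and this is the only substantive point, I would invoke the boundedness of $M$ on $X$. Under the standing hypotheses $p\in \mathbb{P}^{log}_\infty(\Om)$ and $\om\in A_{p(\cdot)}(\Om)$, Theorem \ref{max2} gives exactly that $M$ is bounded from $L^{p(\cdot)}_{\om}(\Om)$ to itself (the "if" direction of the equivalence applies since $\om\in A_{p(\cdot)}$). Here I take $\Om = \Rn$, which is open and unbounded as required. With both hypotheses of Theorem \ref{mus} in hand, its conclusion yields the boundedness of $M_b$ on $L^{p(\cdot)}_{\om}(\Rn)$ together with the norm estimate $\|M_b f\|_{L^{p(\cdot)}_{\om}} \le C\|b\|_* \|f\|_{L^{p(\cdot)}_{\om}}$, which is the desired assertion.

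Since the corollary is a plug-in of already-established machinery, there is no genuine analytic obstacle; the only care required is bookkeeping. I expect the main (though minor) point to be confirming that the $A_{p(\cdot)}$ hypothesis is precisely what Theorem \ref{max2} needs to guarantee the maximal-operator bound on the weighted space, so that Theorem \ref{mus} can be applied with $X=L^{p(\cdot)}_{\om}(\Rn)$. Once that link is made explicit, the proof is a single sentence.
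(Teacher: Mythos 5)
Your proposal is correct and is exactly the derivation the paper intends: the corollary is stated as an immediate consequence of Theorem \ref{mus} applied to $X=L^{p(\cdot)}_{\om}(\Rn)$, with the hypothesis that $M$ is bounded on $X$ supplied by Theorem \ref{max2} under the assumptions $p\in \mathbb{P}^{log}_\infty$ and $\om\in A_{p(\cdot)}$. The paper offers no further argument, so your bookkeeping (Banach function space structure via the isometry $f\mapsto f\om$, plus the maximal-operator bound) fills in precisely the steps the authors left implicit.
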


\begin{theorem}\label{M1c}  \cite{GulHasBad}
	Let $\Om \subset \Rn$ be an open unbounded  set,  $p\in \mathbb{P}^{log}_\infty(\Om)$, $\om\in A_{p(\cdot)}(\Om)$, $b\in BMO(\Om)$ and the function $\vi_1(x,r)$ and $\vi_2(x,r)$ satisfy the condition
	\begin{equation}\label{qhs1shk}
	\sup_{t>r}\left(1+\ln\frac{t}{r}\right)\frac{\ei_{t<s<\infty} \vi_1(x, s)
		\|\om\|_{L^{p(\cdot)}(\widetilde{B}(x,s))}}{\|\om\|_{L^{p(\cdot)}(\widetilde{B}(x,t))}}\le C\vi_2(x,r),
	\end{equation}
	where $C$ does not depend on $x\in\Om$ and $t$.
	Then the operator $M_b$ is
	bounded from the space $\mathcal{M}^{p(\cdot),\vi_1}_{\om}(\Om)$ to the space
	$\mathcal{M}^{p(\cdot),\vi_2}_{\om}(\Om)$.
\end{theorem}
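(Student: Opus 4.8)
The plan is to mirror the two-step strategy behind Theorems \ref{M1} and \ref{M1Xv}: first prove a scale-localized estimate for $M_b$ over weighted balls, and then convert it into the Morrey bound using condition \eqref{qhs1shk} directly. The target local estimate is that for all $x\in\Om$ and $r>0$,
\begin{equation*}
\|M_b f\|_{L^{p(\cdot)}_{\om}(\widetilde{B}(x,r))}
\lesssim \|b\|_{*}\,\|\om\|_{L^{p(\cdot)}(\widetilde{B}(x,r))}\,
\sup_{\rho>r}\Big(1+\ln\tfrac{\rho}{r}\Big)
\frac{\|f\|_{L^{p(\cdot)}_{\om}(\widetilde{B}(x,\rho))}}{\|\om\|_{L^{p(\cdot)}(\widetilde{B}(x,\rho))}},
\end{equation*}
after which the theorem follows quickly. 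I would establish this via the splitting $f=f_1+f_2$ of \eqref{repr} with $f_1=f\chi_{\widetilde{B}(x,2r)}$.

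For the local part, Corollary \ref{Maxkomv} gives $\|M_b f_1\|_{L^{p(\cdot)}_{\om}(\widetilde{B}(x,r))}\le\|M_b f_1\|_{L^{p(\cdot)}_{\om}(\Om)}\lesssim\|b\|_{*}\|f\|_{L^{p(\cdot)}_{\om}(\widetilde{B}(x,2r))}$, and factoring out $\|\om\|_{L^{p(\cdot)}(\widetilde B(x,2r))}$, using the doubling of $\|\om\|_{L^{p(\cdot)}(\widetilde B(x,\cdot))}$ (valid since $\om\in A_{p(\cdot)}(\Om)$) together with $1\le1+\ln(\rho/r)$, absorbs this into the right-hand side above. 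The essential work is the tail $f_2$. For $z\in\widetilde B(x,r)$ only radii $\rho>r$ contribute to $M_b f_2(z)$, and the inclusion $\widetilde B(z,\rho)\subset\widetilde B(x,2\rho)$ reduces each average to one over $\widetilde B(x,2\rho)$. There I write $|b(z)-b(y)|\le|b(z)-b_{\widetilde B(x,2\rho)}|+|b(y)-b_{\widetilde B(x,2\rho)}|$ together with the further split $|b(z)-b_{\widetilde B(x,2\rho)}|\le|b(z)-b_{\widetilde B(x,r)}|+|b_{\widetilde B(x,r)}-b_{\widetilde B(x,2\rho)}|$, in which the cross-scale oscillation obeys $|b_{\widetilde B(x,r)}-b_{\widetilde B(x,2\rho)}|\lesssim\|b\|_{*}(1+\ln(\rho/r))$; this is exactly where the logarithmic factor is generated.

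The three resulting integrals are handled by the generalized H\"older inequality in $L^{p(\cdot)}$--$L^{p'(\cdot)}$: the non-oscillatory ones by $\int_{\widetilde B(x,2\rho)}|f|\lesssim\|f\|_{L^{p(\cdot)}_{\om}(\widetilde B(x,2\rho))}\|\om^{-1}\|_{L^{p'(\cdot)}(\widetilde B(x,2\rho))}$, and the oscillatory one by $\int_{\widetilde B(x,2\rho)}|b-b_{\widetilde B(x,2\rho)}||f|\le\|f\|_{L^{p(\cdot)}_{\om}(\widetilde B(x,2\rho))}\|(b-b_{\widetilde B(x,2\rho)})\om^{-1}\|_{L^{p'(\cdot)}(\widetilde B(x,2\rho))}$. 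The last factor is controlled by the weighted $BMO$ equivalence (Theorem \ref{bmow}) applied to the conjugate exponent $p'(\cdot)$ and the weight $\om^{-1}$; since $A_{p(\cdot)}(\Om)=A_{p(\cdot),p(\cdot)}(\Om)$, the duality lemma of Section~\ref{Preliminaries} gives $\om^{-1}\in A_{p'(\cdot)}(\Om)$, whence $\|(b-b_{\widetilde B(x,2\rho)})\om^{-1}\|_{L^{p'(\cdot)}}\lesssim\|b\|_{*}\|\om^{-1}\|_{L^{p'(\cdot)}}$. Converting $\|\om^{-1}\|_{L^{p'(\cdot)}(\widetilde B(x,2\rho))}\lesssim|B(x,2\rho)|\,\|\om\|_{L^{p(\cdot)}(\widetilde B(x,2\rho))}^{-1}$ through $\om\in A_{p(\cdot)}(\Om)$ (as in \eqref{sal01}) and dividing by $|B(z,\rho)|\approx\rho^n$, each average at scale $\rho$ is bounded by $(|b(z)-b_{\widetilde B(x,r)}|+\|b\|_{*}(1+\ln(\rho/r)))\|f\|_{L^{p(\cdot)}_{\om}(\widetilde B(x,2\rho))}\|\om\|_{L^{p(\cdot)}(\widetilde B(x,2\rho))}^{-1}$. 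Taking the supremum over $\rho>r$ and then the $L^{p(\cdot)}_{\om}(\widetilde B(x,r))$ norm in $z$, where $\|(b-b_{\widetilde B(x,r)})\|_{L^{p(\cdot)}_{\om}(\widetilde B(x,r))}\lesssim\|b\|_{*}\|\om\|_{L^{p(\cdot)}(\widetilde B(x,r))}$ again by Theorem \ref{bmow}, yields the claimed local estimate.

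Finally, I divide the local estimate by $\vi_2(x,r)\|\om\|_{L^{p(\cdot)}(\widetilde B(x,r))}$ and take the supremum over $x\in\Om$, $r>0$. Since $\rho\mapsto\|f\|_{L^{p(\cdot)}_{\om}(\widetilde B(x,\rho))}$ is non-decreasing, for each $s>\rho$ it is at most $\vi_1(x,s)\|\om\|_{L^{p(\cdot)}(\widetilde B(x,s))}\|f\|_{\mathcal M^{p(\cdot),\vi_1}_{\om}}$, so that $\|f\|_{L^{p(\cdot)}_{\om}(\widetilde B(x,\rho))}\le\big(\ei_{s>\rho}\vi_1(x,s)\|\om\|_{L^{p(\cdot)}(\widetilde B(x,s))}\big)\|f\|_{\mathcal M^{p(\cdot),\vi_1}_{\om}}$; inserting this and invoking \eqref{qhs1shk} with $t=\rho$ gives $\|M_b f\|_{\mathcal M^{p(\cdot),\vi_2}_{\om}}\lesssim\|b\|_{*}\|f\|_{\mathcal M^{p(\cdot),\vi_1}_{\om}}$. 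I expect the main obstacle to be the tail estimate for $f_2$: one must extract the sharp logarithmic growth from the cross-scale $BMO$ oscillation while distributing the weight correctly among $f$, $\om$ and $\om^{-1}$, which is precisely what forces the simultaneous use of the weighted $BMO$ equivalence for both $p(\cdot)$ and its conjugate $p'(\cdot)$.
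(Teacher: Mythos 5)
Your proposal is correct, but first a point of fact: this paper never proves Theorem \ref{M1c} at all --- it is imported verbatim from \cite{GulHasBad}, so there is no in-paper proof to compare against; the honest comparison is with the paper's proofs of the analogous commutator results, Theorems \ref{potvk} and \ref{M1Xvc}. Against those, your route differs in one genuine way. The paper's pattern is a local estimate in \emph{integral} form (kernel $\left(1+\ln\frac{s}{t}\right)\frac{ds}{s}$, as in \eqref{newvk}) followed by an abstract weighted Hardy-operator criterion (Theorem \ref{thm3.2.}; the non-commutator Theorem \ref{M1} similarly rests on the supremal-operator criterion, Theorem \ref{thm5.1}). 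You instead derive a local estimate in \emph{supremal} form and then finish by hand: monotonicity of $\rho\mapsto\|f\|_{L^{p(\cdot)}_{\om}(\widetilde{B}(x,\rho))}$ gives $\|f\|_{L^{p(\cdot)}_{\om}(\widetilde{B}(x,\rho))}\le\big(\ei_{s>\rho}\vi_1(x,s)\|\om\|_{L^{p(\cdot)}(\widetilde{B}(x,s))}\big)\|f\|_{\mathcal{M}^{p(\cdot),\vi_1}_{\om}}$, after which \eqref{qhs1shk} with $t=\rho$ applies directly. This is sound and arguably more natural here, since \eqref{qhs1shk} is itself stated in supremal form; what the paper's Hardy/supremal machinery buys is if-and-only-if sharpness and reusability across theorems, neither of which is needed for this one implication. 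Your treatment of the tail $f_2$ (splitting $|b(z)-b(y)|$ through $b_{\widetilde{B}(x,r)}$ and $b_{\widetilde{B}(x,2\rho)}$, H\"older in $L^{p(\cdot)}$--$L^{p'(\cdot)}$, Theorem \ref{bmow} applied both to $(p(\cdot),\om)$ and, via the duality lemma of Section \ref{Preliminaries} with $q=p$, to $(p'(\cdot),\om^{-1})$, and Corollary \ref{Maxkomv} for $f_1$) parallels exactly how the paper estimates $V_1$ and $V_2$ in Theorem \ref{potvk}. Two ingredients you should flag explicitly, though they put you at no lower a level of rigor than the paper itself: the doubling $\|\om\|_{L^{p(\cdot)}(\widetilde{B}(x,2r))}\lesssim\|\om\|_{L^{p(\cdot)}(\widetilde{B}(x,r))}$ and the chaining bound $|b_{\widetilde{B}(x,r)}-b_{\widetilde{B}(x,2\rho)}|\lesssim\|b\|_{*}\left(1+\ln\frac{\rho}{r}\right)$ are standard for full balls, but over the truncated balls $\widetilde{B}=B\cap\Om$ they implicitly require a measure-density-type property of $\Om$; the paper uses the identical facts silently (e.g., inside \eqref{v-1}), so this is an inherited, not a new, gap.
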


\begin{theorem}\label{RKomv} Let $\Om \subset \Rn$ be an open unbounded  set, $0<\al<n$ and $p\in \mathbb{P}^{log}_\infty(\Om)$, $p_+<\frac {n}{\al} $, $\frac1{q(x)}=\frac 1{p(x)}-\frac {\al} {n}$, $\om\in A_{p(\cdot),q(\cdot)}(\Om)$.
	The following assertions are equivalent:
	
	(i) The operator $[b,I^{\al}]$ is bounded from $L^{p(\cdot)}_{\om}(\Om)$
	to $L^{q(\cdot)}_{\om}(\Om)$.
	
	(ii)  $b\in BMO(\Om)$.
\end{theorem}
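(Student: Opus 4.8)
The plan is to prove the two implications separately, using the sharp maximal function machinery for the sufficiency and Janson's Fourier-series argument for the necessity.

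\emph{Sufficiency} $(ii)\Rightarrow(i)$. First I would record that the hypothesis $\om\in A_{p(\cdot),q(\cdot)}(\Om)$ already forces $\om\in A_{q(\cdot)}(\Om)$: since $q>p$ one has $q'<p'$, and the Hölder embedding $\|\om^{-1}\|_{L^{q'(\cdot)}(B)}\lesssim |B|^{1/q'(x)-1/p'(x)}\|\om^{-1}\|_{L^{p'(\cdot)}(B)}$ with $1/q'(x)-1/p'(x)=\al/n$ turns the $A_{p(\cdot),q(\cdot)}$ quotient into the $A_{q(\cdot)}$ quotient. Then I would apply the Fefferman--Stein inequality of Lemma \ref{KS} with exponent $q(\cdot)$ and weight $\om$, giving
$$\|[b,I^{\al}]f\|_{L^{q(\cdot)}_{\om}} \le C\,\big\|\om\,M^{\sharp}([b,I^{\al}]f)\big\|_{L^{q(\cdot)}},$$
and insert the pointwise bound of Lemma \ref{SGH1}. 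I would fix $s>1$ so close to $1$ that $s\al<n$ and $(p/s)_->1$, and handle the two resulting terms by the rescaling identity $\||F|\|_{L^{r(\cdot)}}^{s}=\||F|^{s}\|_{L^{r(\cdot)/s}}$. For the first term,
$$\big\|\om\,(M|I^{\al}f|^{s})^{1/s}\big\|_{L^{q(\cdot)}}^{s} = \big\|M(|I^{\al}f|^{s})\big\|_{L^{q(\cdot)/s}_{\om^{s}}},$$
which is $\lesssim\|I^{\al}f\|_{L^{q(\cdot)}_{\om}}^{s}$ by boundedness of $M$ on $L^{q(\cdot)/s}_{\om^{s}}$ (Theorem \ref{max2}), and then $\lesssim\|f\|_{L^{p(\cdot)}_{\om}}^{s}$ by Corollary \ref{pvc}. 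For the second term, the analogous rescaling reduces matters to $M^{s\al}\colon L^{p(\cdot)/s}_{\om^{s}}\to L^{q(\cdot)/s}_{\om^{s}}$, which is exactly Corollary \ref{pvc} with the balance $\tfrac{1}{q(x)/s}=\tfrac{1}{p(x)/s}-\tfrac{s\al}{n}$ (equivalent to the given relation), again bounded by $\|f\|_{L^{p(\cdot)}_{\om}}$. Summing the two pieces yields $(i)$.

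\emph{Necessity} $(i)\Rightarrow(ii)$. Here I would use Janson's method. Fix a ball $B=B(x_0,r)$ with $\widetilde B(x_0,r)$ and a companion ball $B'=B(x_0-a\,r,r)$, the fixed vector $a$ chosen so that $(x-y)/r$ ranges in a fixed annulus bounded away from $0$ and $\infty$ whenever $x\in B,\ y\in B'$. On that annulus I expand the kernel as an absolutely convergent Fourier series $|z|^{\al-n}=\sum_m c_m e^{i v_m\cdot z}$ with $\sum_m|c_m|<\infty$. For $x\in B$ this gives
$$b(x)-b_{B'} = \frac{1}{|B'|}\int_{B'}(b(x)-b(y))\,dy = \frac{r^{n-\al}}{|B'|}\sum_m c_m\,e^{i v_m\cdot x/r}\,[b,I^{\al}]g_m(x),$$
where $g_m(y)=e^{-i v_m\cdot y/r}\chi_{B'}(y)$, so that $|g_m|=\chi_{B'}$. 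Hence
$$\int_B|b(x)-b_{B'}|\,dx \le \frac{r^{n-\al}}{|B'|}\sum_m|c_m|\int_B|[b,I^{\al}]g_m(x)|\,dx.$$
Each inner integral I bound by the variable-exponent Hölder inequality \eqref{hs2}, namely $\int_B|[b,I^{\al}]g_m|\le C\|[b,I^{\al}]g_m\|_{L^{q(\cdot)}_{\om}}\|\om^{-1}\chi_B\|_{L^{q'(\cdot)}}$, then by $(i)$ and $|g_m|=\chi_{B'}$ this is $\le C\,\|[b,I^{\al}]\|\,\|\om\chi_{B'}\|_{L^{p(\cdot)}}\|\om^{-1}\chi_B\|_{L^{q'(\cdot)}}$. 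Using the Hölder embeddings $\|\om\|_{L^{p(\cdot)}(B')}\lesssim r^{\al}\|\om\|_{L^{q(\cdot)}(B')}$ and $\|\om^{-1}\|_{L^{q'(\cdot)}(B)}\lesssim r^{\al}\|\om^{-1}\|_{L^{p'(\cdot)}(B)}$, together with the $A_{p(\cdot),q(\cdot)}$ condition $\|\om\|_{L^{q(\cdot)}(B)}\|\om^{-1}\|_{L^{p'(\cdot)}(B)}\approx|B|^{1-\al/n}\approx r^{n-\al}$ and the doubling of $\om$ to pass from $B'$ to $B$, one obtains $\|\om\chi_{B'}\|_{L^{p(\cdot)}}\|\om^{-1}\chi_B\|_{L^{q'(\cdot)}}\lesssim r^{n+\al}$. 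Since $|B|\approx r^{n}$ and $\sum_m|c_m|<\infty$, this gives $\tfrac{1}{|B|}\int_B|b-b_{B'}|\le C\|[b,I^{\al}]\|$ uniformly in $B$; combined with $\tfrac{1}{|B|}\int_B|b-b_B|\le\tfrac{2}{|B|}\int_B|b-b_{B'}|$ it yields $b\in BMO(\Om)$.

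The main obstacle I expect is making the Janson reduction fully rigorous in this weighted variable-exponent setting: justifying the interchange of the absolutely convergent Fourier series with the integrals defining $[b,I^{\al}]g_m$, and controlling the companion-ball geometry so that the annulus condition and the doubling estimates persist inside a general unbounded open $\Om$ (where the balls must be replaced by $\widetilde B$). A second, more routine but still technical point is the self-improvement (openness) of the variable-exponent classes $A_{q(\cdot)}$ and $A_{p(\cdot),q(\cdot)}$ needed in the sufficiency direction to pass to the rescaled exponents $p(\cdot)/s,\ q(\cdot)/s$ and the weight $\om^{s}$ for $s$ close to $1$.
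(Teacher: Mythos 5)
Your proof of $(ii)\Rightarrow(i)$ is essentially the paper's own: Fefferman--Stein via Lemma \ref{KS}, the pointwise estimate of Lemma \ref{SGH1}, the rescaling identity, and then Theorem \ref{max2} together with Corollary \ref{pvc}. You are in fact more careful than the paper, which applies Lemma \ref{KS} with exponent $q(\cdot)$ and weight $\om$ without checking $\om\in A_{q(\cdot)}(\Om)$ (your H\"older argument with $1/q'(x)-1/p'(x)=\al/n$ supplies exactly this), and which performs the rescaling to $p(\cdot)/s$, $q(\cdot)/s$, $\om^{s}$ without comment on why the hypotheses of Theorem \ref{max2} and Corollary \ref{pvc} persist; these are the points you rightly flag.

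For $(i)\Rightarrow(ii)$ you depart from the paper, and here your route has a genuine gap in the stated generality. The paper never leaves the ball $\widetilde{B}(x,t)$: it tests $[b,I^{\al}]$ on $\chi_{\widetilde{B}(x,t)}$ and uses that the kernel $|z-y|^{\al-n}$ is positive and $\gtrsim t^{\al-n}$ for $z,y\in\widetilde{B}(x,t)$, then H\"older in the form \eqref{hs2} and the $A_{p(\cdot),q(\cdot)}$ condition. Janson's method, by contrast, is anchored on the companion ball $B'=B(x_0-ar,r)$: you must form $b_{B'}$, test $[b,I^{\al}]$ on the $g_m$ supported in $B'$, and pass from $\|\om\|_{L^{q(\cdot)}(B')}$ to $\|\om\|_{L^{q(\cdot)}(B)}$ by doubling. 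In this theorem $\Om$ is an arbitrary open unbounded set, and $b$, $I^{\al}$ and all norms live only on $\Om$; the ball $B'$ can be disjoint from $\Om$, or meet it in measure $\ll r^{n}$ (take $\Om=\mathbb{R}^{n-1}\times(0,\varepsilon)$ and $r\gg\varepsilon$: every ball of radius $r$ at distance $\approx r$ from $x_0$ meets $\Om$ in measure at most $Cr^{n-1}\varepsilon$). No choice of the fixed vector $a$ --- not even one depending on $(x_0,r)$ --- avoids this, so $b_{B'}$ (or $b_{\widetilde{B}'}$, with $1/|\widetilde{B}'|$ uncontrolled against $r^{-n}$), the test functions $g_m$, and the doubling step all break down. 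This is not a patchable technicality of your scheme, since the Fourier factorization is built on the translated ball; it is precisely why the paper's same-ball argument is the one that survives on domains. Your argument is correct for $\Om=\Rn$ (it is the classical Chanillo/Janson necessity proof). Two minor points: the function to expand on the annulus is the reciprocal kernel $|z|^{n-\al}$, not $|z|^{\al-n}$, otherwise your displayed identity does not come out; and, in fairness, the paper's own necessity computation is itself loose (it moves absolute values through sign-changing integrands and swaps $|x-y|^{\al-n}$ for $|z-y|^{\al-n}$), but its same-ball structure can be repaired by a standard median or sign-decomposition argument, whereas the companion ball cannot be removed from yours.
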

\begin{proof}
	$(ii)\Rightarrow(i)$ Let $f\in L^{p(\cdot)}_{\om}(\Om)$ and $b\in BMO(\Om)$. By the Lemma \ref{KS}, we have
	\begin{gather*}
	\|[b,I^{\al}]f\|_{L^{q(\cdot)}_{\om}(\Om)}
	\lesssim \|M^\sharp([b,I^{\al}]f)\|_{L^{q(\cdot)}_{\om}(\Om)}.
	\end{gather*}

	From Lemma \ref{SGH1}, we have
	\begin{gather*}
	\|M^\sharp([b,I^{\al}]f)\|_{L^{q(\cdot)}_{\om}(\Om)} \lesssim \|b\|_{*}\left\|\left(M|I^{\al}f|^s
	\right)^{\frac{1}{s}}+ \left(M^{\al s}|f|^s
	\right)^{\frac{1}{s}}\right\|_{L^{q(\cdot)}_{\om}(\Om)}
	\\
	\lesssim \|b\|_{*} \left[\left\|\left(M|I^{\al} f|^s
	\right)^{\frac{1}{s}}\right\|_{L^{q(\cdot)}_{\om}(\Om)}+ \left\|\left(M^{\al s}|f|^s
	\right)^{\frac{1}{s}}\right\|_{L^{q(\cdot)}_{\om}(\Om)}\right].
	\end{gather*}
	
	By Theorem \ref{max2} and Corollary \ref{pvc}, we have
	\begin{align*}
	\left\|\left(M|I^{\al}f|^{s}\right)^{\frac1{s}}\right\|_{L^{q(\cdot)}_{\om}(\Om)}
	\lesssim \left\||I^{\al}f|^{s} \right\|_{L^{\frac{q(\cdot)}{s}}_{\om^{sq(\cdot)}}(\Om)}^{\frac1{s}}
	=  \left\|I^{\al}f\right\|_{L^{q(\cdot)}_{\om}(\Om)}
	\lesssim  \left\|f\right\|_{L^{p(\cdot)}_{\om}(\Om)}.
	\end{align*}
	
	By Corollary \ref{pvc}, we have
	\begin{gather*}
	\left\|\left(M^{\al s}|f|^s\right)^{\frac{1}{s}}\right\|_{L^{q(\cdot)}_{\om}(\Om)}
	\lesssim \left\|f\right\|_{L^{p(\cdot)}_{\om}(\Om)}.
	\end{gather*}
	
	Therefore
	\begin{gather*}
	\|[b,I^{\al}]f\|_{L^{q(\cdot)}_{\om}(\Om)}
	\lesssim \|b\|_{*}\left\|f\right\|_{L^{p(\cdot)}_{\om}(\Om)}.
	\end{gather*}

	
	$(i)\Rightarrow(ii)$  Now, let us prove the "only if" part. Let $[b,I^{\alpha}]$ be bounded from
	$L^{p(\cdot)}_{\om}(\Om)$ to $L^{q(\cdot)}_{\om}(\Om)$, $1< p_+<\frac
	{n}{\alpha}$. Then
	\begin{align*}
	&|B(x,t)| \int_{\widetilde{B}(x,t)}|b(z)-b_{B(x,t)}|dz
	\\
	&=\frac1{|B(x,t)|} \int\limits_{\widetilde{B}(x,t)}\Big|b(z)-\frac1{|B(x,t)|}
	\int\limits_{\widetilde{B}(x,t)}b(y) dy \Big|dz\\
	&\leq\frac1{|B(x,t)|^{1+\frac{\alpha}{n}}} \int\limits_{\widetilde{B}(x,t)}\frac1{|B(x,t)|^{1-\frac{\alpha}{n}}}
	\Big|\int\limits_{\widetilde{B}(x,t)}\left(b(z)-b(y)\right)dy\Big| dz\\
	&\leq\frac1{|B(x,t)|^{1+\frac{\alpha}{n}}} \int\limits_{\widetilde{B}(x,t)}
	\Big|\int\limits_{\widetilde{B}(x,t)}\left(b(z)-b(y)\right)|x-y|^{\alpha-n} dy \Big|dz\\
	&\leq\frac1{|B(x,t)|^{1+\frac{\alpha}{n}}} \int\limits_{\widetilde{B}(x,t)}\left|[b,I_{\alpha}]\chi_{B(x,t)}(z) \right| dz
	\\
	&\leq C t^{-n-\alpha}\|[b,I_{\alpha}]\chi_{B(x,t)}\|_{L^{q(\cdot)}_{\om}}
	\|\chi_{B(x,t)}\|_{L^{q'(\cdot)}_{\om^{-1}}}
	\\
	&\leq C t^{-n-\alpha}\|\om\|_{L^{p(\cdot)}(B(x,t))}
	\|\om^{-1}\|_{L^{q'(\cdot)}(B(x,t))}  \leq C.
	\end{align*}
	
	Hence we get
	\begin{gather*}
	|B(x,t)|^{-1} \int_{\widetilde{B}(x,t)}|b(y)-b_{B(x,t)}| dy  \le C.
	\end{gather*}
	
	This shows that $b\in BMO(\Om)$.

	The theorem has been proved.
\end{proof}

\begin{theorem} \label{potvk}
	Let $\Om \subset \Rn$ be an open unbounded  set, $0<\al<n$, $p\in \mathbb{P}^{log}_\infty(\Om)$,  $p_+<\frac {n}{\al} $, $\frac1{q(x)}=\frac 1{p(x)}-\frac {\al} {n}$, $\om\in A_{p(\cdot),q(\cdot)}(\Om)$, $b\in BMO(\Om)$.
	Then
	$$
	\|[b,I^{\alpha}] f\|_{L^{q(\cdot)}_{\om}(\widetilde{B}(x,t))}\le
	C\|b\|_{*}\|\om\|_{L^{q(\cdot)}(\widetilde{B}(x,t))}
	$$
	\begin{equation}\label{newvk}
	\times
	\int_{t}^{\infty}\left(1+\ln\frac{s}{t}\right)
	\|f\|_{L^{p(\cdot)}_{\om}(\widetilde{B}(x,s))}
	\|\om\|_{L^{q(\cdot)}(\widetilde{B}(x,s))}^{-1}\frac{ds}{s},
	\end{equation}
	where $C$ does not depend on $f$, $x$ and $t.$
\end{theorem}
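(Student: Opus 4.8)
The plan is to mimic the proof of Theorem~\ref{potv}, replacing the global boundedness of $I^\alpha$ (Corollary~\ref{pvc}) by the global boundedness of the commutator (the implication $(ii)\Rightarrow(i)$ of Theorem~\ref{RKomv}), and inserting the extra logarithmic factor that is produced when a $BMO$ function is compared across dyadic scales. Fix $x\in\Om$ and $t>0$ and decompose $f=f_1+f_2$ exactly as in \eqref{repr}, so that $[b,I^\alpha]f=[b,I^\alpha]f_1+[b,I^\alpha]f_2$; it then suffices to bound each piece on $\widetilde B(x,t)$ by the right-hand side of \eqref{newvk}.

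For the local part I would first estimate, using $(ii)\Rightarrow(i)$ of Theorem~\ref{RKomv},
$$
\|[b,I^\alpha]f_1\|_{L^{q(\cdot)}_{\om}(\widetilde B(x,t))}\le \|[b,I^\alpha]f_1\|_{L^{q(\cdot)}_{\om}(\Om)}\lesssim \|b\|_*\,\|f\|_{L^{p(\cdot)}_{\om}(\widetilde B(x,2t))},
$$
and then feed $\|f\|_{L^{p(\cdot)}_{\om}(\widetilde B(x,2t))}$ into the chain of inequalities \eqref{sal01}, which already delivers the bound by $\|\om\|_{L^{q(\cdot)}(\widetilde B(x,t))}\int_t^\infty\|f\|_{L^{p(\cdot)}_{\om}(\widetilde B(x,s))}\|\om\|_{L^{q(\cdot)}(\widetilde B(x,s))}^{-1}\frac{ds}{s}$. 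Since $1+\ln\frac st\ge 1$, this is dominated by the target expression, so the local part requires no new idea.

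For the nonlocal part I would use that for $z\in\widetilde B(x,t)$ and $y\notin\widetilde B(x,2t)$ one has $|z-y|\approx|x-y|$, and split $b(z)-b(y)=\big(b(z)-b_{\widetilde B(x,t)}\big)+\big(b_{\widetilde B(x,t)}-b(y)\big)$, which decomposes $|[b,I^\alpha]f_2(z)|$ into two contributions. In the first, the factor $|b(z)-b_{\widetilde B(x,t)}|$ comes out of the $y$-integral, the remaining integral $\int_{\Om\setminus\widetilde B(x,2t)}|x-y|^{\alpha-n}|f(y)|\,dy$ is bounded exactly as in the $f_2$-estimate of Theorem~\ref{potv} by $\int_t^\infty\|f\|_{L^{p(\cdot)}_{\om}(\widetilde B(x,s))}\|\om\|_{L^{q(\cdot)}(\widetilde B(x,s))}^{-1}\frac{ds}{s}$, and taking the $L^{q(\cdot)}_{\om}(\widetilde B(x,t))$-norm in $z$ of $|b(z)-b_{\widetilde B(x,t)}|$ and invoking the weighted $BMO$ equivalence (Theorem~\ref{bmow} for the exponent $q(\cdot)$) gives the factor $\|b\|_*\|\om\|_{L^{q(\cdot)}(\widetilde B(x,t))}$. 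The second contribution keeps $|b(y)-b_{\widetilde B(x,t)}|$ inside the integral; writing $|x-y|^{\alpha-n}\approx\int_{|x-y|}^\infty s^{\alpha-n-1}\,ds$, applying Fubini and then H\"older's inequality in $L^{p(\cdot)}(\widetilde B(x,s))$ to the pair $f\om$ and $(b-b_{\widetilde B(x,t)})\om^{-1}$ reduces matters to controlling $\|(b-b_{\widetilde B(x,t)})\om^{-1}\|_{L^{p'(\cdot)}(\widetilde B(x,s))}$ for $s\ge t$.

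The main obstacle is precisely this last weighted estimate, which is where the logarithmic factor is born: I must show
$$
\|(b-b_{\widetilde B(x,t)})\om^{-1}\|_{L^{p'(\cdot)}(\widetilde B(x,s))}\lesssim \Big(1+\ln\tfrac st\Big)\,\|b\|_*\,\|\om^{-1}\|_{L^{p'(\cdot)}(\widetilde B(x,s))},\qquad s\ge t.
$$
I would prove it by writing $b-b_{\widetilde B(x,t)}=(b-b_{\widetilde B(x,s)})+(b_{\widetilde B(x,s)}-b_{\widetilde B(x,t)})$; the first term is handled by the weighted $BMO$ equivalence (Theorem~\ref{bmow}), while the second is a constant whose size is controlled by telescoping over the $\approx\log_2(s/t)$ dyadic annuli between radius $t$ and radius $s$, each step costing $\|b\|_*$, yielding $|b_{\widetilde B(x,s)}-b_{\widetilde B(x,t)}|\lesssim(1+\ln\frac st)\|b\|_*$. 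Combining this with the $A_{p(\cdot),q(\cdot)}$ condition, which converts $\|\om^{-1}\|_{L^{p'(\cdot)}(\widetilde B(x,s))}$ into $[\om]_{A_{p(\cdot),q(\cdot)}}\,|B(x,s)|^{1-\alpha/n}\,\|\om\|_{L^{q(\cdot)}(\widetilde B(x,s))}^{-1}$ exactly as in \eqref{sal01}, and remembering the factor $s^{\alpha-n-1}$ from the integral representation (so that $|B(x,s)|^{1-\alpha/n}s^{\alpha-n-1}\approx s^{-1}$), one obtains the weight $(1+\ln\frac st)\,\|f\|_{L^{p(\cdot)}_{\om}(\widetilde B(x,s))}\|\om\|_{L^{q(\cdot)}(\widetilde B(x,s))}^{-1}\frac{ds}{s}$ under the integral, which is \eqref{newvk}. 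Some care is needed to ensure the weighted $BMO$ equivalence is available for the exponents $q(\cdot)$ and $p'(\cdot)$ with the weights $\om$ and $\om^{-1}$; this is verified from $\om\in A_{p(\cdot),q(\cdot)}(\Om)$ together with Lemma~2.1 and the resulting membership in the relevant Muckenhoupt classes.
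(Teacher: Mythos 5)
Your proposal is correct and follows essentially the same path as the paper's proof: the same decomposition \eqref{repr}, the same treatment of the local part via Theorem~\ref{RKomv} $(ii)\Rightarrow(i)$ followed by \eqref{sal01}, the same splitting of $b(z)-b(y)$ through the average $b_{\widetilde{B}(x,t)}$, and the same mechanism for the term keeping $b(y)-b_{\widetilde{B}(x,t)}$ inside the integral (H\"older with the pair $f\om$, $(b-b_{\widetilde{B}(x,t)})\om^{-1}$, comparison of $b_{\widetilde{B}(x,t)}$ with $b_{\widetilde{B}(x,s)}$ producing the factor $1+\ln\frac{s}{t}$, and the $A_{p(\cdot),q(\cdot)}$ condition converting $\|\om^{-1}\|_{L^{p'(\cdot)}(\widetilde{B}(x,s))}$ into $\|\om\|_{L^{q(\cdot)}(\widetilde{B}(x,s))}^{-1}$). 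The single divergence is the term where $b(z)-b_{\widetilde{B}(x,t)}$ factors out of the $y$-integral: you estimate $\|(b-b_{\widetilde{B}(x,t)})\chi_{\widetilde{B}(x,t)}\|_{L^{q(\cdot)}_{\om}}$ directly via the weighted $BMO$ equivalence (Theorem~\ref{bmow} with exponent $q(\cdot)$ and weight $\om$), whereas the paper majorizes $|b(z)-b_{\widetilde{B}(x,t)}|$ by $M_b\chi_{B(x,t)}(z)$ and invokes the weighted boundedness of the maximal commutator (Corollary~\ref{Maxkomv}); both routes yield the same factor $\|b\|_*\|\om\|_{L^{q(\cdot)}(\widetilde{B}(x,t))}$. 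Note that both your route and the paper's rest on the same implicit hypothesis, namely that $\om\in A_{p(\cdot),q(\cdot)}(\Om)$ places $\om$ (respectively $\om^{-1}$) in the Muckenhoupt-type class needed to apply these auxiliary results with exponent $q(\cdot)$ (respectively $p'(\cdot)$); the paper is no more explicit about this than your closing caveat, so your level of rigor matches the original.
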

\begin{proof}
	We represent  $f$ as
	\begin{equation}\label{reprx}
	f=f_1+f_2, \ \quad f_1(y)=f(y)\chi _{\widetilde{B}(x,2t)}(y),\quad
	f_2(y)=f(y)\chi _{\Om\backslash \widetilde{B}(x,2t)}(y), \ \quad t>0,
	\end{equation}
	and have
	$$
	[b,I^{\alpha}] f(x)=[b,I^{\alpha}] f_1(x)+[b,I^{\alpha}] f_2(x).
	$$
	By  Theorem \ref{RKomv} we obtain
	\begin{align*}
	&\|[b,I^{\alpha}] f_1\|_{L^{q(\cdot)}_{\om}(\widetilde{B}(x,t))}\le
	\|[b,I^{\alpha}] f_1\|_{L^{q(\cdot)}_{\om}(\Om)}
	\\
	& \lesssim \|b\|_{*}\|f_1\|_{L^{p(\cdot)}_{\om}(\Om)} = \|b\|_{*}\|f\|_{L^{p(\cdot)}_{\om}(\widetilde{B}(x,2t))}.
	\end{align*}
	Then
	\begin{gather*}
	\|[b,I^{\alpha}] f_1\|_{L^{q(\cdot)}_{\om}(\widetilde{B}(x,t))}
	\le C\|b\|_{*}\|f\|_{L^{p(\cdot)}_{\om}(\widetilde{B}(x,2t))},
	\end{gather*}
	where the constant $C$ is independent of $f$.
	
	Taking into account that from the inequality \eqref{sal01} we have
	$$
	\|f\|_{L^{p(\cdot)}_{\om}(\widetilde{B}(x,t))} \le
	C  \|b\|_{*}\|\om\|_{L^{q(\cdot)}(\widetilde{B}(x,t))} \int_{t}^\infty  \|f\|_{L^{p(\cdot)}_{\om}(\widetilde{B}(x,s))}
	\|\om\|_{L^{q(\cdot)}(\widetilde{B}(x,s))}^{-1}\frac{ds}{s},
	$$
	and then
	\begin{equation}\label{Ga8vk}
	\|[b,I^{\alpha}] f_1\|_{L^{q(\cdot)}_{\om}(\widetilde{B}(x,t))} \le
	C  \|b\|_{*}\|\om\|_{L^{q(\cdot)}(\widetilde{B}(x,t))} \int_{t}^\infty  \|f\|_{L^{p(\cdot)}_{\om}(\widetilde{B}(x,s))}
	\|\om\|_{L^{q(\cdot)}(\widetilde{B}(x,s))}^{-1}\frac{ds}{s}.
	\end{equation}
	
	When $|x-z|\le t$, $|z-y|\ge 2t,$ we have $\frac{1}{2} |z-y| \le
	|x-y|\le\frac{3}{2} |z-y|$, and therefore
	\begin{align*}
	|[b,I^{\alpha}] f_2(x)| &\le
	\int_{\Om\backslash \widetilde{B}(x,2t)}|b(y)-b(z)||z-y|^{\al -n}|f(y)| dy
	\\
	&\le C \int_{\Om\backslash \widetilde{B}(x,2t)}|b(y)-b(z)||x-y|^{\al-n}|f(y)| dy.
	\end{align*}
	
	We  obtain
	\begin{align*}
	&\int_{\Om\backslash \widetilde{B}(x,2t)}|b(y)-b(z)||x-y|^{\al-n}  |f(y)| dy
	\\
	&= \int_{\Om\backslash \widetilde{B}(x,2t)}|b(y)-b(z)||f(y)|
	\left(\int_{|x-y|}^\infty s^{\al-n -1}ds\right)dy
	\\
	&\le C \int_{2t}^\infty s^{\al-n -1}
	\left(\int_{\{y \in \Om : 2t\le|x-y|\le s \}}|b(y)-b_{\widetilde{B}(x,t)}||f(y)|dy\right)ds
	\\
	&+ C |b(z)-b_{\widetilde{B}(x,t)}|\int_{2t}^\infty s^{\al-n -1}
	\left(\int_{\{y \in \Om : 2t\le|x-y|\le s \}}|f(y)|dy\right)ds=V_1+V_2.
	\end{align*}
	
	To estimate $V_1$:
	\begin{align}\label{v-1}
	&V_1=C\int_{2t}^\infty s^{\al-n -1}
	\left(\int_{\{y \in \Om : 2t\le|x-y|\le s \}}|b(y)-b_{\widetilde{B}(x,t)}||f(y)|dy\right)ds\notag
	\\
	& \le C\int_{t}^\infty s^{\al-n -1}\|b(\cdot) - b_{\widetilde{B}(x,s)}\|_{L^{p'(\cdot)}_{\om^{-1}}(\widetilde{B}(x,s))}
	\|f\|_{L^{p(\cdot)}_{\om}(\widetilde{B}(x,s))}ds\notag
	\\
	& + C\int_{t}^\infty s^{\al-n -1}|b_{\widetilde{B}(x,t)}-b_{\widetilde{B}(x,s)}|
	\left(\int_{ \widetilde{B}(x,s)}|f(y)|dy\right)ds\notag
	\\
	&\le C\|b\|_{*}\int_{t}^{\infty} s^{\al-n -1}
	\|\om^{-1}\|_{L^{p'(\cdot)}(\widetilde{B}(x,s))}
	\|f\|_{L^{p(\cdot)}_{\om}(\widetilde{B}(x,s))}ds \notag
	\\
	&+ C\|b\|_{*}\int_{t}^{\infty} s^{\al-n -1}\ln\frac{s}{t}
	\|\om^{-1}\|_{L^{p'(\cdot)}(\widetilde{B}(x,s))}
	\|f\|_{L^{p(\cdot)}_{\om}(\widetilde{B}(x,s))}ds \notag
	\\
	&\le  C\|b\|_{*}\int_{t}^{\infty} \left(1+\ln\frac{s}{t}\right)
	\|\om\|_{L^{p(\cdot)}(\widetilde{B}(x,s))}^{-1}
	\|f\|_{L^{p(\cdot)}_{\om}(\widetilde{B}(x,s))}\frac{ds}{s} .
	\end{align}
	
	To estimate $V_2$:
	\begin{align}\label{v-2}
	V_2=&C |b(z)-b_{\widetilde{B}(x,t)}|\int_{2t}^\infty s^{\al-n -1}
	\left(\int_{\{y \in \Om : 2t\le|x-y|\le s \}}|f(y)|dy\right)ds \notag
	\\
	&\le C|B(x,t)|^{-1}\int_{\widetilde{B}(x,t)}|b(z) - b(y)|dy\int_{2t}^{\infty}
	\|f\|_{L^{p(\cdot)}_{\om}(\widetilde{B}(x,s))}
	\|\om\|_{L^{q(\cdot)}(\widetilde{B}(x,s))}^{-1}\frac{ds}{s}
	\notag
	\\
	&\le CM_b\chi_{B(x,t)}(z)\int_{t}^{\infty} \left(1+\ln\frac{s}{t}\right)
	\|f\|_{L^{p(\cdot)}_{\om}(\widetilde{B}(x,s))}
	\|\om\|_{L^{q(\cdot)}(\widetilde{B}(x,s))}^{-1}\frac{ds}{s},
	\end{align}
	where $C$ does not depend on $x,t$.  Then by Corollary \ref{Maxkomv} and \eqref{v-1}, \eqref{v-2} we  have
	$$
	\|[b,I^{\alpha}]f_2\|_{L^{q(\cdot)}_{\om}(\widetilde{B}(x,t))} \le \|V_1\|_{L^{q(\cdot)}_{\om}(\widetilde{B}(x,t))}+\|V_2\|_{L^{q(\cdot)}_{\om}(\widetilde{B}(x,t))}
	$$
	$$
	\le C\|b\|_{*}\|\om\|_{L^{q(\cdot)}(\widetilde{B}(x,t))}\int_{t}^{\infty}\left(1+\ln\frac{s}{t}\right)
	\|f\|_{L^{p(\cdot)}_{\om}(\widetilde{B}(x,s))}
	\|\om\|_{L^{q(\cdot)}(\widetilde{B}(x,s))}^{-1}\frac{ds}{s}
	$$
	$$
	+C\|M_b\chi_{B(x,t)}\|_{L^{q(\cdot)}_{\om}(\widetilde{B}(x,t))}
	\int_{t}^{\infty}\left(1+\ln\frac{s}{t}\right)
	\|f\|_{L^{p(\cdot)}_{\om}(\widetilde{B}(x,s))}
	\|\om\|_{L^{q(\cdot)}(\widetilde{B}(x,s))}^{-1}\frac{ds}{s}
	$$
	$$
	\le C\|b\|_{*}\|\om\|_{L^{q(\cdot)}(\widetilde{B}(x,t))}\int_{t}^{\infty}\left(1+\ln\frac{s}{t}\right)
	\|f\|_{L^{p(\cdot)}_{\om}(\widetilde{B}(x,s))}
	\|\om\|_{L^{q(\cdot)}(\widetilde{B}(x,s))}^{-1}\frac{ds}{s}
	$$
	$$
	+C\|b\|_{*}\|\om\|_{L^{q(\cdot)}(\widetilde{B}(x,t))}
	\int_{t}^{\infty}\left(1+\ln\frac{s}{t}\right)
	\|f\|_{L^{p(\cdot)}_{\om}(\widetilde{B}(x,s))}
	\|\om\|_{L^{q(\cdot)}(\widetilde{B}(x,s))}^{-1}\frac{ds}{s}
	$$
	$$
	\le C\|b\|_{*}\|\om\|_{L^{q(\cdot)}(\widetilde{B}(x,t))}
	\int_{t}^{\infty}\left(1+\ln\frac{s}{t}\right)
	\|f\|_{L^{p(\cdot)}_{\om}(\widetilde{B}(x,s))}
	\|\om\|_{L^{q(\cdot)}(\widetilde{B}(x,s))}^{-1}\frac{ds}{s}.
	$$
	
	Hence
	\begin{align*}
	&\|[b,I^{\alpha}] f_2\|_{L^{q(\cdot)}_{\om}(\widetilde{B}(x,t))}
	\\
	\le & C\|b\|_{*}\|\om\|_{L^{q(\cdot)}(\widetilde{B}(x,t))}
	\int_{t}^{\infty}\left(1+\ln\frac{s}{t}\right)
	\|f\|_{L^{p(\cdot)}_{\om}(\widetilde{B}(x,s))}
	\|\om\|_{L^{q(\cdot)}(\widetilde{B}(x,s))}^{-1}\frac{ds}{s},
	\end{align*}
	which together with \eqref{Ga8vk} yields \eqref{newvk}.
\end{proof}

\begin{theorem}\label{M1Xvc}
	Let $\Om \subset \Rn$ be an open unbounded  set, $0<\al<n$, $p\in \mathbb{P}^{log}_\infty(\Om)$, $p_+<\frac {n}{\al} $,
	$\frac1{q(x)}=\frac 1{p(x)}-\frac {\al} {n}$, $\om\in A_{p(\cdot),q(\cdot)}(\Om)$, $b\in BMO(\Om)$ and the functions $\vi_1(x,t)$ and $\vi_2(x,t)$ fulfill  the
	condition
	\begin{equation}\label{H1vk}
	\int_{t}^{\infty}\left(1+\ln\frac{s}{t}\right)\frac{\ei_{s<r<\i} \vi_1(x, r) \|\om\|_{L^{p(\cdot)}(\widetilde{B}(x,r))}} {\|\om\|_{L^{q(\cdot)}(\widetilde{B}(x,s))}}\frac{ds}{s}
	\le C \vi_2(x,t).
	\end{equation}
	Then the operators $[b,I^{\alpha}]$ is bounded from ${\cal M}^{p(\cdot),\vi_1}_{\om}(\Om)$ to
	${\cal M}^{q(\cdot),\vi_2}_{\om}(\Om)$.
\end{theorem}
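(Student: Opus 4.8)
The plan is to follow the proof of Theorem~\ref{M1Xv} verbatim in its architecture, replacing the two non-logarithmic ingredients by their logarithmically weighted counterparts: the local estimate \eqref{newvK} of Theorem~\ref{potv} is replaced by the local estimate \eqref{newvk} of Theorem~\ref{potvk}, and the Hardy inequality of Theorem~\ref{GulJMS} (for $H_w$) is replaced by that of Theorem~\ref{thm3.2.} (for $H^{\ast}_w$, which carries exactly the factor $1+\ln\frac{s}{t}$ produced by the commutator).

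First I would fix $x\in\Om$ and $t>0$ and divide \eqref{newvk} by $\vi_2(x,t)\,\|\om\|_{L^{q(\cdot)}(\widetilde{B}(x,t))}$, obtaining
$$
\frac{\|[b,I^{\al}]f\|_{L^{q(\cdot)}_{\om}(\widetilde{B}(x,t))}}{\vi_2(x,t)\,\|\om\|_{L^{q(\cdot)}(\widetilde{B}(x,t))}}
\lesssim \|b\|_{*}\,\frac{1}{\vi_2(x,t)}\int_{t}^{\infty}\Big(1+\ln\tfrac{s}{t}\Big)\|f\|_{L^{p(\cdot)}_{\om}(\widetilde{B}(x,s))}\,\|\om\|_{L^{q(\cdot)}(\widetilde{B}(x,s))}^{-1}\,\frac{ds}{s}.
$$
Taking the supremum over $x\in\Om$ and $t>0$ on the left-hand side reproduces $\|[b,I^{\al}]f\|_{\mathcal{M}^{q(\cdot),\vi_2}_{\om}(\Om)}$ by the definition of the generalized weighted Morrey norm, so it remains to bound the right-hand side supremum.

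Next I would read the right-hand side as the action of the weighted Hardy operator $H^{\ast}_{w}$ from Theorem~\ref{thm3.2.}. For each fixed $x$ I set $g(r)=\|f\|_{L^{p(\cdot)}_{\om}(\widetilde{B}(x,r))}$, $w(r)=\|\om\|_{L^{q(\cdot)}(\widetilde{B}(x,r))}^{-1}r^{-1}$, $v_2(r)=\vi_2(x,r)^{-1}$ and $v_1(r)=\vi_1(x,r)^{-1}\|\om\|_{L^{p(\cdot)}(\widetilde{B}(x,r))}^{-1}$. Here $g$ is non-negative and non-decreasing in $r$ (the underlying balls grow with $r$), and $v_1$ is bounded outside a neighborhood of the origin under the standing assumptions, so the hypotheses of Theorem~\ref{thm3.2.} hold. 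The finiteness of the constant $B$ of that theorem is precisely condition \eqref{H1vk}, since $1/\sup_{\tau>s}v_1(\tau)=\ei_{s<\tau<\infty}\vi_1(x,\tau)\,\|\om\|_{L^{p(\cdot)}(\widetilde{B}(x,\tau))}$ turns the integral defining $B$ into the left-hand side of \eqref{H1vk}. Applying Theorem~\ref{thm3.2.} therefore gives
$$
\sup_{t>0}v_2(t)\,H^{\ast}_{w}g(t)\ \le\ C\,\sup_{t>0}v_1(t)\,g(t),
$$
and taking the supremum in $x$ identifies the right-hand side with $C\,\|f\|_{\mathcal{M}^{p(\cdot),\vi_1}_{\om}(\Om)}$, which is the asserted boundedness (with the extra factor $\|b\|_{*}$ carried along).

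The substantive work has already been isolated in Theorem~\ref{potvk}; in this argument the only genuine point to verify is that the integral condition \eqref{H1vk}, with its $\ei$-factor and its weight $1+\ln\frac{s}{t}$, is the exact transcription of the condition $B<\infty$ of Theorem~\ref{thm3.2.} under the dictionary above. I expect this matching of the supremal denominator with the $\ei$-term to be the main (though mild) obstacle; once it is checked, the conclusion is a direct substitution.
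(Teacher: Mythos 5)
Your proposal is correct and is essentially the paper's own proof: the paper proves Theorem \ref{M1Xvc} exactly by dividing the local estimate \eqref{newvk} of Theorem \ref{potvk} by $\vi_2(x,t)\,\|\om\|_{L^{q(\cdot)}(\widetilde{B}(x,t))}$, taking suprema, and applying the weighted Hardy inequality of Theorem \ref{thm3.2.} with the same substitutions $v_2(r)=\vi_2(x,r)^{-1}$, $v_1(r)=\vi_1(x,r)^{-1}\|\om\|_{L^{p(\cdot)}(\widetilde{B}(x,r))}^{-1}$, $g(r)=\|f\|_{L^{p(\cdot)}_{\om}(\widetilde{B}(x,r))}$, with \eqref{H1vk} playing the role of the finiteness of $B$. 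Your dictionary is in fact slightly more careful than the paper's: you take $w(r)=\|\om\|_{L^{q(\cdot)}(\widetilde{B}(x,r))}^{-1}r^{-1}$, which is what the integrand of \eqref{newvk} actually requires (the paper writes $\|\om\|_{L^{p(\cdot)}(\widetilde{B}(x,r))}^{-1}r^{-1}$, a slip), and your identification of the denominator in $B$ with $\ei_{s<r<\infty}\vi_1(x,r)\|\om\|_{L^{p(\cdot)}(\widetilde{B}(x,r))}$, i.e. the supremum of $v_1$ over $(s,\infty)$, is the reading consistent with \eqref{H1vk} and with Theorem \ref{GulJMS}, notwithstanding that the statement of Theorem \ref{thm3.2.} prints $\sup_{0<\tau<s}v_1(\tau)$.
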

\begin{proof} Let $\om\in A_{p(\cdot),q(\cdot)}(\Om)$, by condition \eqref{H1vk} and Theorems \ref{potvk}, \ref{thm3.2.} with
	\linebreak	$v_2(r)=\vi_2(x,r)^{-1}$, $v_1(r)=\vi_1(x,r)^{-1} \|\om\|_{L^{p(\cdot)}(\widetilde{B}(x,r))}^{-1}$,
	$g(r)=\|f\|_{L^{p(\cdot)}_{\om}(\widetilde{B}(x,r))}$ and $w(r)=\|\om\|_{L^{p(\cdot)}(\widetilde{B}(x,r))}^{-1} r^{-1}$ we obtain
	\begin{align*}
	&\|[b,I^{\alpha}] f\|_{{\cal M}^{q(\cdot),\vi_2}_{\om}(\Om)}
	\\
	& \lesssim \|b\|_{*}\|\sup_{x\in \Om, \, t>0} \frac{1}{\vi_2(x,t)} \int_{t}^\infty \left(1+\ln\frac{s}{t}\right) \|f\|_{L^{p(\cdot)}_{\om}(\widetilde{B}(x,s))}
	\|\om\|_{L^{q(\cdot)}(\widetilde{B}(x,s))}^{-1}\frac{ds}{s}
	\\
	& \lesssim \|b\|_{*}\sup_{x\in \Om, \, t>0}
	\frac{1}{\vi_1(x,t)\|\om\|_{L^{p(\cdot)}(\widetilde{B}(x,t))}} \,
	\|f\|_{L^{p(\cdot)}_{\om}(\widetilde{B}(x,t))} = \|b\|_{*}\|\|f\|_{{\cal M}^{p(\cdot),\vi_1}_{\om}(\Om)}.
	\end{align*}
\end{proof}

\

\section{Riesz potential  and its commutators in the spaces
	$V\mathcal{M}^{p(\cdot),\vi}_{\om}(\Om)$}\label{potentialsv}
\setcounter{theorem}{0} \setcounter{equation}{0}

\begin{theorem}\label{Risv}
	Let $\Om \subset \Rn$ be an open unbounded  set, $0<\al<n$, $p\in \mathbb{P}^{log}_\infty(\Om)$, $p_+<\frac {n}{\al} $, $\frac1{q(x)}=\frac 1{p(x)}-\frac {\al} {n}$, $\om\in A_{p(\cdot),q(\cdot)}(\Om)$ and the functions $\vi_1(x,t)$ and $\vi_2(x,t)$ fulfill  the
	conditions
	\begin{equation}\label{rv'}
	C_{\gamma_0}:=\int_{\gamma_0}^{\i} \frac{\ei_{s<r<\infty} \vi_1(x,r) \|\om\|_{L^{q(\cdot)}(\widetilde{B}(x,r))}} {\|\om\|_{L^{q(\cdot)}(\widetilde{B}(x,s))}} \frac{ds}{s} <\infty
	\end{equation}
	for every $\gamma_0> 0$, and
	\begin{equation}\label{rv}
	\int_{t}^{\infty} \frac{\ei_{s<r<\infty} \vi_1(x,r) \|\om\|_{L^{q(\cdot)}(\widetilde{B}(x,r))}} {\|\om\|_{L^{q(\cdot)}(\widetilde{B}(x,s))}} \frac{ds}{s}
	\le C \vi_2(x,t).
	\end{equation}
	Then the operators $I^{\al}$ is bounded from $V{\cal M}^{p(\cdot),\vi_1}_{\om}(\Om)$ to
	$V{\cal M}^{q(\cdot),\vi_2}_{\om}(\Om)$.
\end{theorem}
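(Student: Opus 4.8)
The plan is to reduce the assertion to two facts: first, that $I^\al$ maps the non-vanishing space $\mathcal{M}^{p(\cdot),\vi_1}_{\om}(\Om)$ boundedly into $\mathcal{M}^{q(\cdot),\vi_2}_{\om}(\Om)$, and second, that $I^\al$ carries the vanishing subspace into the vanishing subspace. The first is already furnished by Theorem~\ref{M1Xv}, since \eqref{rv} is the hypothesis \eqref{H1v} there; so it remains to verify, for $f\in V\mathcal{M}^{p(\cdot),\vi_1}_{\om}(\Om)$, that
\begin{equation*}
\lim_{t\to0}\ \sup_{x\in\Om}\ \frac{\|I^\al f\,\chi_{\widetilde{B}(x,t)}\|_{L^{q(\cdot)}_{\om}(\Om)}}{\vi_2(x,t)\,\|\om\|_{L^{q(\cdot)}(\widetilde{B}(x,t))}}=0.
\end{equation*}
By Theorem~\ref{potv} the ratio under the supremum is dominated, after cancelling $\|\om\|_{L^{q(\cdot)}(\widetilde{B}(x,t))}$, by
\begin{equation*}
\frac{C}{\vi_2(x,t)}\int_{t}^{\infty}\|f\|_{L^{p(\cdot)}_{\om}(\widetilde{B}(x,s))}\,\|\om\|_{L^{q(\cdot)}(\widetilde{B}(x,s))}^{-1}\,\frac{ds}{s},
\end{equation*}
and I would split $\int_t^\infty=\int_t^{\gm_0}+\int_{\gm_0}^\infty$ for a parameter $\gm_0>0$ chosen later, treating the two pieces by different mechanisms.

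For the near piece I would exploit membership of $f$ in the vanishing space. Setting
\begin{equation*}
\dl_{\gm_0}:=\sup_{x\in\Om,\ 0<s<\gm_0}\frac{\|f\|_{L^{p(\cdot)}_{\om}(\widetilde{B}(x,s))}}{\vi_1(x,s)\,\|\om\|_{L^{p(\cdot)}(\widetilde{B}(x,s))}},
\end{equation*}
the vanishing condition gives $\dl_{\gm_0}\to0$ as $\gm_0\to0$. Substituting $\|f\|_{L^{p(\cdot)}_{\om}(\widetilde{B}(x,s))}\le \dl_{\gm_0}\,\vi_1(x,s)\|\om\|_{L^{p(\cdot)}(\widetilde{B}(x,s))}$ for $s<\gm_0$ and enlarging the integration range back to $(t,\infty)$ (the integrand being nonnegative), the near integral is controlled via \eqref{rv} by $C\,\dl_{\gm_0}\,\vi_2(x,t)$. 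Dividing by $\vi_2(x,t)$, the contribution of the near piece to the ratio is at most $C\,\dl_{\gm_0}$, uniformly in $x$ and $t$; given $\ve>0$ this is made $<\ve$ by fixing $\gm_0$ sufficiently small.

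With $\gm_0$ now frozen, I would treat the far piece through the global Morrey norm: for every $s$ one has $\|f\|_{L^{p(\cdot)}_{\om}(\widetilde{B}(x,s))}\le \|f\|_{\mathcal{M}^{p(\cdot),\vi_1}_{\om}}\,\vi_1(x,s)\|\om\|_{L^{p(\cdot)}(\widetilde{B}(x,s))}$, so that by \eqref{rv'} the integral over $(\gm_0,\infty)$ is bounded by $C_{\gm_0}\,\|f\|_{\mathcal{M}^{p(\cdot),\vi_1}_{\om}}$ with $C_{\gm_0}<\infty$. Consequently the far contribution to the ratio is at most $C\,C_{\gm_0}\,\|f\|_{\mathcal{M}^{p(\cdot),\vi_1}_{\om}}/\vi_2(x,t)$; since $\gm_0$ is now fixed and the non-triviality assumptions \eqref{van1} force $1/\vi_2(x,t)\to0$ as $t\to0$ uniformly in $x$, this tends to $0$. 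Choosing $\gm_0$ small (near piece $<\ve$) and then $t$ small (far piece $<\ve$) yields the claimed limit.

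The delicate point I expect is precisely this last uniformity: $C_{\gm_0}$ is a supremum over $x\in\Om$ that must be shown finite for each fixed $\gm_0$ from \eqref{rv'}, and it must be held fixed while $t\to0$, so the whole argument hinges on the decay $\sup_{x}1/\vi_2(x,t)\to0$ guaranteed by \eqref{van1}, together with the $A_{p(\cdot),q(\cdot)}(\Om)$-regularity of $\om$, which underlies the monotone behaviour in $s$ of the weighted norms $\|\om\|_{L^{q(\cdot)}(\widetilde{B}(x,s))}$ appearing in both \eqref{rv} and \eqref{rv'}.
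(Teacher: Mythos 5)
Your proposal follows essentially the same route as the paper's own proof: the norm bound from Theorem \ref{M1Xv}, the local estimate \eqref{newvK} of Theorem \ref{potv}, a splitting of $\int_t^\infty$ at a parameter $\gamma_0$ with the near piece handled by the vanishing property of $f$ combined with \eqref{rv} and the far piece by the global Morrey norm combined with \eqref{rv'}, and finally letting $t\to 0$. The only difference is in the bookkeeping of the last step, where you reduce the far contribution to $C\,C_{\gamma_0}\,\|f\|_{\mathcal{M}^{p(\cdot),\vi_1}_{\om}}/\vi_2(x,t)$ and invoke the standing non-triviality assumption \eqref{van1} to send it to zero uniformly in $x$, while the paper phrases the same step as smallness of $\vi_2(x,t)/\|\om\|_{L^{q(\cdot)}(\widetilde{B}(x,t))}$; your version is the internally consistent one.
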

\begin{proof} The norm inequalities follow from Theorem \ref{M1Xv}, so we only have to prove that if
	$$
	\lim\limits_{r\to 0}\sup\limits_{x\in\Rn} \frac{1}{\vi_1(x,t)\|\om\|_{L^{q(\cdot)}(\widetilde{B}(x,t))}} \|f\chi_{\widetilde{B}(x,t)}\|_{L^{p(\cdot)}_{\om}(\Om)}=0,
	$$
	then
	\begin{equation}\label{rv2}
	\lim\limits_{r\to 0}\sup\limits_{x\in\Rn} \frac{1}{\vi_2(x,t)\|\om\|_{L^{q(\cdot)}(\widetilde{B}(x,t))}} \| I^{\al}
	f\chi_{\widetilde{B}(x,t)}\|_{L^{q(\cdot)}_{\om}(\Om)}=0
	\end{equation}
	otherwise. 
	
	To show that $\sup\limits_{x\in\Rn} \frac{1}{\vi_2(x,t)\|\om\|_{L^{q(\cdot)}(\widetilde{B}(x,t))}} \| I^{\al}
	f\chi_{\widetilde{B}(x,t)}\|_{L^{q(\cdot)}_{\om}(\Om)}<\varepsilon$ for small $r$, we split the right-hand
	side of \eqref{newvK}:
	\begin{equation}\label{rv3}
	\sup\limits_{x\in\Rn} \frac{1}{\vi_2(x,t)\|\om\|_{L^{q(\cdot)}(\widetilde{B}(x,t))}} \| I^{\al}
	f\chi_{\widetilde{B}(x,t)}\|_{L^{p(\cdot)}_{\om}(\Om)}\le C_0\left(I_{1,\gamma_0}(x,t)+I_{2,\gamma_0}(x,t)\right),
	\end{equation}
	where $\gamma_0 > 0$ will be chosen as shown below (we may take $\gamma_0 < 1$),
	$$
	I_{1,\gamma_0}(x,t) := \|\om\|_{L^{q(\cdot)}(\widetilde{B}(x,t))} \int_{t}^{\gamma_0}  \|f\|_{L^{p(\cdot)}_{\om}(\widetilde{B}(x,s))}
	\|\om\|_{L^{q(\cdot)}(\widetilde{B}(x,s))}^{-1} \frac{ds}{s},
	$$
	$$
	I_{2,\gamma_0}(x,t) := \|\om\|_{L^{q(\cdot)}(\widetilde{B}(x,t))} \int_{\gamma_0}^\infty  \|f\|_{L^{p(\cdot)}_{\om}(\widetilde{B}(x,s))}
	\|\om\|_{L^{q(\cdot)}(\widetilde{B}(x,s))}^{-1} \frac{ds}{s},
	$$
	and it is supposed that $t < \gamma_0$. Now we choose any fixed $\gamma_0 > 0$ such that
	$$
	\sup\limits_{x\in\Rn} \frac{1}{\vi_1(x,t)\|\om\|_{L^{q(\cdot)}(\widetilde{B}(x,t))}} \|f\chi_{\widetilde{B}(x,t)}\|_{L^{p(\cdot)}_{\om}(\Om)}<
	\frac{\varepsilon}{2CC_0},\,\, \mbox{for all} \,\,0 < t < \gamma_0,
	$$
	where $C$ and $C_0$ are constants from \eqref{rv} and \eqref{rv3}, which is possible since $f\in V{\cal M}^{p(\cdot),\vi_1}_{\om}(\Om)$. Then
	$$
	\sup\limits_{x\in\Rn} CI_{1,\gamma_0}(x,t)<\frac{\varepsilon}{2}, \,\,0 < t < \gamma_0,
	$$
	by \eqref{rv}.
	
	The estimation of the second term now may be made already by the choice of $t$
	sufficiently small thanks to the condition \eqref{rv'}. We have
	$$
	I_{2,\gamma_0}(x,t)\le C_{\gamma_0}\frac{\vi_2(x,t)}{\|\om\|_{L^{q(\cdot)}(\widetilde{B}(x,t))}}\|f\|_{V{\cal M}^{q(\cdot),\vi_2}_{\om}(\Om)},
	$$
	where $C_{\gamma_0}$ is the constant from \eqref{rv'}. Then, by \eqref{rv'} it suffices to choose $r$ small enough such that
	$$
	\frac{\vi_2(x,t)}{\|\om\|_{L^{q(\cdot)}(\widetilde{B}(x,t))}}<\frac{\varepsilon}{2CC_{\gamma_0}\|f\|_{V{\cal M}^{q(\cdot),\vi_2}_{\om}(\Om)}},
	$$
	which completes the proof of \eqref{rv2}.
\end{proof}

\begin{theorem}\label{CRisv}
	Let $\Om \subset \Rn$ be an open unbounded  set, $0<\al<n$, $p\in \mathbb{P}^{log}_\infty(\Om)$, $p_+<\frac {n}{\al} $, $\frac1{q(x)}=\frac 1{p(x)}-\frac {\al} {n}$, $\om\in A_{p(\cdot),q(\cdot)}(\Om)$ and the functions $\vi_1(x,t)$ and $\vi_2(x,t)$ fulfill  the
	conditions
	\begin{equation}\label{rv4}
	C_{\gamma}:=\int_{\gamma}^{\infty}\left(1+\ln\frac{s}{t}\right)\frac{\ei_{s<r<\infty} \vi_1(x,r) \|\om\|_{L^{q(\cdot)}(\widetilde{B}(x,r))}} {\|\om\|_{L^{q(\cdot)}(\widetilde{B}(x,s))}} \frac{ds}{s}
	<\infty
	\end{equation}
	for every $\gamma$, and
	\begin{equation}\label{rv5}
	\int_{t}^{\infty}\left(1+\ln\frac{s}{t}\right)\frac{\ei_{s<r<\infty} \vi_1(x,r) \|\om\|_{L^{q(\cdot)}(\widetilde{B}(x,r))}} {\|\om\|_{L^{q(\cdot)}(\widetilde{B}(x,s))}} \frac{ds}{s}
	\le C \vi_2(x,t).
	\end{equation}
	Then the operators $[b,I^{\alpha}]$ is bounded from $V{\cal M}^{p(\cdot),\vi_1}_{\om}(\Om)$ to
	$V{\cal M}^{q(\cdot),\vi_2}_{\om}(\Om)$.
\end{theorem}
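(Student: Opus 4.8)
The plan is to run the same two–stage argument as for Theorem \ref{Risv}, with the pointwise bound of Theorem \ref{potv} replaced by its commutator version Theorem \ref{potvk} and the mapping statement of Theorem \ref{M1Xv} replaced by Theorem \ref{M1Xvc}. The norm inequality, and hence the inclusion $[b,I^{\alpha}]f\in\mathcal{M}^{q(\cdot),\vi_2}_{\om}(\Om)$, is precisely Theorem \ref{M1Xvc} under hypothesis \eqref{rv5}, so the only thing left to establish is that the vanishing property is preserved: if
$$\lim_{t\to0}\ \sup_{x\in\Om}\frac{\|f\chi_{\widetilde B(x,t)}\|_{L^{p(\cdot)}_{\om}(\Om)}}{\vi_1(x,t)\,\|\om\|_{L^{q(\cdot)}(\widetilde B(x,t))}}=0,$$
then the analogous limit holds for $[b,I^{\alpha}]f$ with $(q,\vi_2)$ in place of $(p,\vi_1)$.

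First I would feed the local estimate \eqref{newvk} into this quotient; dividing by $\vi_2(x,t)\,\|\om\|_{L^{q(\cdot)}(\widetilde B(x,t))}$ cancels the outer weight factor and leaves
$$\frac{\|[b,I^{\alpha}]f\chi_{\widetilde B(x,t)}\|_{L^{q(\cdot)}_{\om}}}{\vi_2(x,t)\,\|\om\|_{L^{q(\cdot)}(\widetilde B(x,t))}}\lesssim\frac{\|b\|_{*}}{\vi_2(x,t)}\int_{t}^{\infty}\Big(1+\ln\tfrac{s}{t}\Big)\,\|f\|_{L^{p(\cdot)}_{\om}(\widetilde B(x,s))}\,\|\om\|_{L^{q(\cdot)}(\widetilde B(x,s))}^{-1}\frac{ds}{s}.$$
Then I would split the integral at a threshold $\gamma\in(0,1)$, writing $\int_{t}^{\infty}=\int_{t}^{\gamma}+\int_{\gamma}^{\infty}=:J_{1}+J_{2}$. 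For $J_{1}$, I would use the monotonicity of $s\mapsto\|f\|_{L^{p(\cdot)}_{\om}(\widetilde B(x,s))}$ to bound it by $\big(\sup_{0<\tau<\gamma}\tfrac{\|f\|_{L^{p(\cdot)}_{\om}(\widetilde B(x,\tau))}}{\vi_1(x,\tau)\,\|\om\|_{L^{q(\cdot)}(\widetilde B(x,\tau))}}\big)\,\inf_{s<r<\infty}\vi_1(x,r)\,\|\om\|_{L^{q(\cdot)}(\widetilde B(x,r))}$; the supremal factor is $<\varepsilon$ once $\gamma$ is fixed small, by the vanishing hypothesis on $f$, and then condition \eqref{rv5} collapses the remaining integral into $\vi_2(x,t)$, giving $J_{1}/\vi_2(x,t)\lesssim\varepsilon$ uniformly for $t<\gamma$.

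The term $J_{2}$ is where the commutator case genuinely departs from Theorem \ref{Risv}, and this is the step I expect to be the main obstacle. On $s\ge\gamma>t$ the full (finite) norm $\|f\|_{\mathcal{M}^{p(\cdot),\vi_1}_{\om}}$ is available, but the logarithmic weight $1+\ln\frac{s}{t}$ still carries a $t$–dependence that blows up as $t\to0$, whereas in Theorem \ref{Risv} the corresponding tail constant $C_{\gamma_0}$ of \eqref{rv'} was $t$–free. I would tame this by the elementary inequality $1+\ln\frac{s}{t}\le\big(1+\ln\frac{\gamma}{t}\big)\big(1+\ln\frac{s}{\gamma}\big)$, valid for $s\ge\gamma>t$ (the cross term is nonnegative), which factors out the offending $1+\ln\frac{\gamma}{t}$ and leaves the $t$–independent convergent tail
$$C_\gamma:=\int_{\gamma}^{\infty}\Big(1+\ln\tfrac{s}{\gamma}\Big)\,\inf_{s<r<\infty}\vi_1(x,r)\|\om\|_{L^{q(\cdot)}(\widetilde B(x,r))}\,\|\om\|_{L^{q(\cdot)}(\widetilde B(x,s))}^{-1}\frac{ds}{s},$$
finite by \eqref{rv4}.

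Thus $J_{2}/\vi_2(x,t)\lesssim C_{\gamma}\big(1+\ln\frac{\gamma}{t}\big)/\vi_2(x,t)$ with $C_\gamma$ fixed, and it remains to send $t\to0$. The quotient tends to $0$ because the standing growth assumption on $\vi_2$ (cf. \eqref{van1}, which guarantees nontriviality of $V\mathcal{M}^{q(\cdot),\vi_2}_{\om}(\Om)$) forces $\vi_2(x,t)$ to outgrow $\ln\frac{1}{t}$ as $t\to0$; this is precisely the extra quantitative demand that distinguishes the commutator from the bare potential of Theorem \ref{Risv}. Choosing first $\gamma$ to make $J_{1}$ small and then $t$ small to make $J_{2}$ small yields the desired limit, which together with the norm bound of Theorem \ref{M1Xvc} proves that $[b,I^{\alpha}]$ maps $V\mathcal{M}^{p(\cdot),\vi_1}_{\om}(\Om)$ into $V\mathcal{M}^{q(\cdot),\vi_2}_{\om}(\Om)$.
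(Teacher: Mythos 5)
Your overall architecture is exactly the paper's: the norm bound is quoted from Theorem \ref{M1Xvc}, the local estimate \eqref{newvk} of Theorem \ref{potvk} is divided by $\vi_2(x,t)\|\om\|_{L^{q(\cdot)}(\widetilde{B}(x,t))}$, the integral is split at a fixed threshold $\gamma$, and the near part $\int_t^\gamma$ is handled by the vanishing hypothesis on $f$ together with \eqref{rv5}; up to that point your argument and the paper's proof coincide step for step. Your treatment of the tail is in fact more careful than the paper's: you correctly observe that the quantity called $C_\gamma$ in \eqref{rv4} is not a constant, since it contains $t$ through the factor $1+\ln\frac{s}{t}$, and your factorization $1+\ln\frac{s}{t}\le\bigl(1+\ln\frac{\gamma}{t}\bigr)\bigl(1+\ln\frac{s}{\gamma}\bigr)$ for $s\ge\gamma>t$ is a legitimate way to extract a genuinely $t$-free tail constant. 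The paper instead treats $C_\gamma$ as if it were independent of $t$ and closes by demanding that $\vi_2(x,t)/\|\om\|_{L^{q(\cdot)}(\widetilde{B}(x,t))}$ be small for small $t$, a demand that is asserted rather than derived from the stated hypotheses.

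Your closing step, however, is a genuine gap. You need $\sup_{x}\bigl(1+\ln\frac{\gamma}{t}\bigr)/\vi_2(x,t)\to0$ as $t\to0$, and you justify it by claiming that \eqref{van1} forces $\vi_2(x,t)$ to outgrow $\ln\frac1t$. Condition \eqref{van1} says nothing of the kind: it constrains the product $\|\om\|_{L^{p(\cdot)}(\widetilde{B}(x,t))}\inf_{x}\vi(x,t)$, and since the weight factor $\|\om\|_{L^{p(\cdot)}(\widetilde{B}(x,t))}$ itself tends to $0$, no growth rate for $\vi_2$ alone can be read off from it. What the hypotheses actually yield is the opposite-direction bound coming from \eqref{rv5}: since $1+\ln\frac{s}{t}\ge1+\ln\frac{\gamma}{t}$ for $s\ge\gamma$, one gets $C\vi_2(x,t)\ge\bigl(1+\ln\tfrac{\gamma}{t}\bigr)c_\gamma$, where $c_\gamma$ is the $t$-free tail integral without the logarithm; hence your ratio $\bigl(1+\ln\frac{\gamma}{t}\bigr)/\vi_2(x,t)$ is merely bounded, not vanishing, and enlarging $\gamma$ does not help because the relevant quotient of tail integrals is always $\ge1$. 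So your estimate shows $J_2/\vi_2(x,t)=O(1)$ and the limit \eqref{rv6} does not follow; one would need an additional assumption, e.g.\ $\sup_x\ln(1/t)/\vi_2(x,t)\to0$ as $t \to 0$, beyond \eqref{rv4}--\eqref{rv5}. To be fair, the paper's own proof has precisely the same soft spot, hidden in the abuse of notation around $C_\gamma$ and in the unexplained final smallness requirement; your factorization exposes the difficulty rather than creates it, but it does not resolve it.
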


\begin{proof} The norm inequalities follow from Theorem \ref{M1Xvc}, so we only have to prove that
	$$
	\lim\limits_{r\to 0}\sup\limits_{x\in\Rn} \frac{1}{\vi_1(x,t)\|\om\|_{L^{q(\cdot)}(\widetilde{B}(x,t))}} \|f\chi_{\widetilde{B}(x,t)}\|_{L^{p(\cdot)}_{\om}(\Om)}=0\,\, \Rightarrow
	$$
	\begin{equation}\label{rv6}
	\lim\limits_{r\to 0}\sup\limits_{x\in\Rn} \frac{1}{\vi_2(x,t)\|\om\|_{L^{q(\cdot)}(\widetilde{B}(x,t))}} \|[b,I^{\alpha}]f\chi_{\widetilde{B}(x,t)}\|_{L^{q(\cdot)}_{\om}(\Om)}=0
	\end{equation}
	otherwise.
	
	To show that $\sup\limits_{x\in\Rn} \frac{1}{\vi_2(x,t)\|\om\|_{L^{q(\cdot)}(\widetilde{B}(x,t))}} \|[b,I^{\alpha}]
	f\chi_{\widetilde{B}(x,t)}\|_{L^{q(\cdot)}_{\om}(\Om)}<\varepsilon$ for small $r$, we split the right-hand
	side of \eqref{newvk}:
	\begin{equation}\label{rv7}
	\sup\limits_{x\in\Rn} \frac{1}{\vi_2(x,t)\|\om\|_{L^{q(\cdot)}(\widetilde{B}(x,t))}} \|[b,I^{\alpha}]
	f\chi_{\widetilde{B}(x,t)}\|_{L^{q(\cdot)}_{\om}(\Om)}\le C_0\left(I_{1,\gamma}(x,r)+I_{2,\gamma}(x,r)\right),
	\end{equation}
	where $\gamma> 0$ will be chosen as shown below (we may take $\gamma < 1$),
	$$
	I_{1,\gamma}(x,t) := \|b\|_{*}\|\om\|_{L^{q(\cdot)}(\widetilde{B}(x,t))} \int_{t}^{\gamma}  \left(1+\ln\frac{s}{t}\right)
	\|f\|_{L^{p(\cdot)}_{\om}(\widetilde{B}(x,s))}
	\|\om\|_{L^{q(\cdot)}(\widetilde{B}(x,s))}^{-1}\frac{ds}{s},
	$$
	$$
	I_{2,\gamma}(x,t) := \|b\|_{*}\|\om\|_{L^{q(\cdot)}(\widetilde{B}(x,t))} \int_{\gamma}^\infty  \left(1+\ln\frac{s}{t}\right)
	\|f\|_{L^{p(\cdot)}_{\om}(\widetilde{B}(x,s))}
	\|\om\|_{L^{q(\cdot)}(\widetilde{B}(x,s))}^{-1}\frac{ds}{s}
	$$
	and it is supposed that $t < \gamma$. Now we choose any fixed $\gamma> 0$ such that
	$$
	\sup\limits_{x\in\Rn} \frac{1}{\vi_1(x,t)\|\om\|_{L^{q(\cdot)}(\widetilde{B}(x,t))}} \|f\chi_{\widetilde{B}(x,t)}\|_{L^{p(\cdot)}_{\om}(\Om)}<
	\frac{\varepsilon}{2CC_0\|b\|_{*}},\,\, \mbox{for all} \,\,0 < t < \gamma,
	$$
	where $C$ and $C_0$ are constants from \eqref{rv5} and \eqref{rv7}, which is possible since $f\in V{\cal M}^{p(\cdot),\vi_1}_{\om}(\Om)$. Then
	$$
	\sup\limits_{x\in\Rn} CI_{1,\gamma}(x,t)<\frac{\varepsilon}{2}, \,\,0 < t < \gamma,
	$$
	by \eqref{rv5}.
	
	The estimation of the second term now may be made already by the choice of $ r$
	sufficiently small thanks to the condition \eqref{rv4}. We have
	$$
	I_{2,\gamma}(x,t)\le C_{\gamma}\|b\|_{*}\frac{\vi_2(x,t)}{\|\om\|_{L^{q(\cdot)}(\widetilde{B}(x,t))}}
	\|f\|_{V{\cal M}^{q(\cdot),\vi_2}_{\om}(\Om)},
	$$
	where $C_{\gamma}$ is the constant from \eqref{rv4}. Then, by \eqref{rv4} it suffices to choose $r$ small enough such that
	$$
	\frac{\vi_2(x,t)}{\|\om\|_{L^{q(\cdot)}(\widetilde{B}(x,t))}} < \frac{\varepsilon}{2CC_{\gamma}\|b\|_{*}\|f\|_{V{\cal M}^{q(\cdot),\vi_2}_{\om}(\Om)}},
	$$
	which completes the proof of \eqref{rv5}.
\end{proof}

\

{\it Acknowledgment}.
We thank the referee(s) for careful reading the paper and useful comments.
The research of V.S. Guliyev was partially supported by the Ministry of Education and Science of the Russian Federation (the Agreement number: 02.a03.21.0008) and by the grant of 1st Azerbaijan-Russia Joint Grant Competition (Grant No. EIFBGM-4-RFTF-1/2017-21/01/1).

\

\end{document}